\providecommand{\U}[1]{\protect\rule{.1in}{.1in}}
\newtheorem{theorem}{Theorem}[section]
\theoremstyle{plain}
\newtheorem{condition}{Condition}
\newtheorem{corollary}[theorem]{Corollary}
\newtheorem{definition}[theorem]{Definition}
\newtheorem{lemma}[theorem]{Lemma}
\newtheorem{notation}{Notation}
\newtheorem{proposition}[theorem]{Proposition}
\newtheorem{remark}[theorem]{Remark}
\numberwithin{equation}{section}
\begin{document}
\title[Evolving Communities]{ Evolving Communities with Individual Preferences}
\author{Thomas Cass}
\address{Thomas Cass, Department of Mathematics, Imperial College London, The Huxley
Building, 180 Queensgate, London.}
\email{thomas.cass@imperial.ac.uk}
\author{Terry Lyons}
\address{Oxford Man Institute }
\thanks{The work of both authors was supported by EPSRC grant EP/F029578/1. The
research of Terry Lyons is supported by EPSRC grant EP/H000100/1 and the
European Research Council under the European Union's Seventh Framework
Programme (FP7-IDEAS-ERC) / ERC grant agreement nr. 291244. Terry Lyons
acknowleges the support of the Oxford-Man Institute.}
\maketitle

\begin{abstract}
The goal of this paper is to provide mathematically rigorous tools for
modelling the evolution of a community of interacting individuals. We model
the population by a measure space $\left(  \Omega,\mathcal{F},\nu\right)  $
where $\nu$ determines the abundance of individual preferences. The
preferences of an individual $\omega\in\Omega$ are described by a measurable
choice $X\left(  \omega\right)  $ of a rough path.

We aim to identify, for each individual, a choice for the forward evolution
$Y_{t}\left(  \omega\right)  $ for an individual in the community. These
choices $Y_{t}\left(  \omega\right)  $ must be consistent so that
$Y_{t}\left(  \omega\right)  $ correctly accounts for the individual's
preference and correctly models their interaction with the aggregate behaviour
of the community.

We focus on the case of weakly interacting systems, where we are able to
exhibit the existence and uniqueness of consistent solutions.

In general, solutions are continuum of interacting threads analogous to the
huge number of individual atomic trajectories that together make up the motion
of a fluid. The evolution of the population need not be governed by any
over-arching PDE. Although one can match the standard nonlinear parabolic PDEs
of McKean-Vlasov type with specific examples of communities in this case. The
bulk behaviour of the evolving population provides a solution to the PDE.

An important technical result is continuity of the behaviour of the system
with respect to changes in the measure $\nu$ assigning weight to individuals.
Replacing the deterministic $\nu$ with the empirical distribution of an i.i.d.
sample from $\nu$ leads to many standard models, and applying the continuity
result allows easy proofs for propagation of chaos.

The rigorous underpinning presented here leads to uncomplicated models which
have wide applicability in both the physical and social sciences. We make no
presumption that the macroscopic dynamics are modelled by a PDE.

This work builds on the fine probability literature considering the limit
behaviour for systems where a large no of particles are interacting with
independent preferences; there is also work on continuum models with
preferences described by a semi-martingale measure. We mention some of the key papers.

\end{abstract}
\keywords{Key words and phrases : Rough paths analysis, interacting particle systems,
propagation of chaos, dynamic economic equilibrium.}

\section{Introduction}

Consider a community $\Omega=\left\{  \omega_{i},i\in1,\ldots,N\right\}  $ of
$N$ individuals who at time zero are allocated positions $\left(  Y_{0}\left(
\omega\right)  \right)  _{\omega\in\Omega}$ in some state space, and suppose
that these individuals have preferences which determine how they evolve in
their environment. Let the evolution of individual $\omega_{i}$ be denoted
$Y^{i},$ and suppose $X^{i}$ is the preference of this individual. To model
this situation we assume that $Y^{i}$ and $X^{i}$ are related via
\[
dY^{i}=f\left(  Y^{i}\right)  dX^{i}.
\]
This equation has a unique meaning when $X^{i}$ is a $p-$rough path and $f$ is
Lip-$\gamma$, $\ \gamma>p$ (see \cite{LQ02}). By imposing a measure on
$\Omega$ (e.g. the counting measure), we can introduce $\Gamma=$Law$\left(
Y\right)  $, and then consider the situation where the evolution $Y^{i}$ is
influenced by the wider population through $\Gamma.$ This leads to equations
of the form%
\[
dY=f\left(  Y\right)  dX+g\left(  Y,\Gamma\right)  dt.
\]

Deterministic models of this type are commonplace in the modelling of physical
systems. For example, our individuals might be planets. Although planets do
not have preferences, they are subject to gravitational forces exerted by
other planets which affect the state $Y=\left(  p,q\right)  $ of their
position and momentum. In this setting, we can model the Newtonian evolution
of the locations of the individuals via a differential equation:%

\begin{align*}
dp_{t}\left(  \omega\right)   &  =-\sum_{\tilde{\omega}\in\Omega
\backslash\left\{  \omega\right\}  }Gm\left(  \omega\right)  m\left(
\tilde{\omega}\right)  \frac{q_{t}\left(  \omega\right)  -q_{t}\left(
\tilde{\omega}\right)  }{\left\vert q_{t}\left(  \omega\right)  -q_{t}\left(
\tilde{\omega}\right)  \right\vert ^{3}}dt\\
dq_{t}\left(  \omega\right)   &  =\frac{1}{m\left(  \omega\right)  }%
p_{t}\left(  \omega\right)  dt\\
G  &  =6.6730010^{-11}\text{N(m/kg})^{2}.
\end{align*}
Unless there is a collision, the theory of ordinary differential equations is
an adequate tool to describe the short time evolution of this system. If each
member of the community carried charge of the same sign then the equations
would change again
\begin{gather*}
dp_{t}\left(  \omega\right)  =\sum_{\tilde{\omega}\in\Omega\backslash\left\{
\omega\right\}  }\frac{_{\mu_{0}}}{4\pi}\frac{1}{m\left(  \omega\right)
m\left(  \tilde{\omega}\right)  }C\left(  \omega\right)  C\left(
\tilde{\omega}\right)  \frac{q_{t}\left(  \omega\right)  -q_{t}\left(
\tilde{\omega}\right)  }{\left\vert q_{t}\left(  \omega\right)  -q_{t}\left(
\tilde{\omega}\right)  \right\vert ^{3}}p_{t}\left(  \omega\right)  \times
dq_{t}\left(  \tilde{\omega}\right) \\
dq_{t}\left(  \omega\right)  =\frac{1}{m\left(  \omega\right)  }p_{t}\left(
\omega\right)  dt\\
C\text{ is the charge on }\omega\text{ and }\mu_{0}\text{=}4\pi10^{-7}%
\text{N/A is the magnetic permeability.}%
\end{gather*}
These examples are very different in detail because of the involvement of the
$dq_{t}.$ But both equations capture systems of physical interest, where it is
natural to consider the evolution of the population as a whole, and to
understand what happens when particles are replaced by more particles with
proportionately smaller mass (or charge) in the continuum limit. In this case
one would hope and expect that the particle and/or current density would solve
the appropriate Vlasov style equation.

Of course, there are a huge number of similar if less precisely characterised
models: in the social sciences, in the modelling of the evolution of cancer,
etc. where the evolution of an individual is affected by the evolution of the
wider community. It may not always be the case that the interaction with the
population is pairwise and more generally, one might expect to consider
equations of the form
\[
dY_{t}\left(  \omega\right)  =\phi\left(  Y_{t}\left(  \omega\right)
,\mathbb{\nu}_{\ast}Y_{t}\right)  dt,
\]
where $\mathbb{\nu}_{\ast}Y_{t}$ is the push forward of $\mathbb{\nu}$ giving
the mass distribution of $Y$ at time $t,$ and $Y_{t}\left(  \omega\right)  $
is a solution to the above equation for $\mu$-every $\omega.$

Individual differences mean that different individuals will respond
differently to the same external environment. We can easily make adaptations
to the calculus to take such behaviour into account. One is lead to equations
of the following kind:%
\[
dY_{t}\left(  \omega\right)  =\phi\left(  Y_{t}\left(  \omega\right)
,\mathbb{\nu}_{\ast}Y_{t}\right)  dt+\xi\left(  Y_{t}\left(  \omega\right)
,\mathbb{\nu}_{\ast}Y_{t}\right)  dX_{t}\left(  \omega\right)
\]
where $X\left(  \omega\right)  $ represents the individual preferences of the
individual $\omega.$ Now if $X$ is smooth there is no additional difficulty.
If $\nu$ is a probability measure, and $X$ is a semi-martingale under this
measure then (under regularity conditions) Sznitman \cite{sznitman}, Kurtz
\cite{kurtz1} , M\'{e}l\'{e}ard \cite{mel} and others have proved that the
corresponding particle system obtained by taking an i.i.d. sample from $X$ and
using this empirical measure in the above equations provides a converging
sequence of particle systems. The limit can be identified with the law of a
non-linear PDE which solves the Vlasov equation. Dawson and G\"{a}rtner
\cite{dawson} have important results on the large deviations in the
convergence of the weakly interactive system (where $\xi\left(  Y_{t}\left(
\omega\right)  ,\mathbb{\nu}_{\ast}Y_{t}\right)  $=$\xi\left(  Y_{t}\left(
\omega\right)  \right)  $ and later den Hollander \cite{hol} and Guionnet
\cite{Gui3}, \cite{Gui1},\cite{Gui2} and \cite{Gui4} considered the large
deviations for interaction in a random media in problems arising from the
dynamics of spin glasses. Kurtz promoted more advanced discussion in
\cite{kurtz2}.

In many cases of interest, it is unreasonable to expect the preferences $X$ to
be a semi-martingale as evidenced by the sucess of fractional Brownian motion
in the modelling of fluids (see \cite{hairer} and the references therein). In
addition, individuals often have knowledge that makes the previsible
assumption equally inappropriate. We now understand that the natural
assumption on $X$ that leads to equation with a strong meaning is that $X$
should be rough path. Indeed there are a large number of deterministic (and
numerically approximable) systems that evolve without the assistance of a PDE.
We will study a mathematical framework which exposes the consequences of
persistent differences between individuals in the population dynamics (see
\cite{coulson} for a study of such a phenomenon in the context of red deer populations.)

The McKean-Vlasov model leads one, in the limit, to the equation
\[
dY_{t}\left(  \omega\right)  =\phi\left(  Y_{t}\left(  \omega\right)
,\mathbb{\nu}_{\ast}Y_{t}\right)  dt+\xi\left(  Y_{t}\left(  \omega\right)
,\mathbb{\nu}_{\ast}Y_{t}\right)  dW_{t}\left(  \omega\right)
\]
where the individual preferences are given by a $d$-dimensional Wiener measure
$W$. However individuals can have very different volatility and speed of
reaction to events. Let $\sigma$ be a positive real function on the space
$\Omega$ of individuals and consider the equation%
\[
dY_{t}\left(  \omega\right)  =\phi\left(  Y_{t}\left(  \omega\right)
,\mathbb{\nu}_{\ast}Y_{t}\right)  dt+\xi\left(  Y_{t}\left(  \omega\right)
,\mathbb{\nu}_{\ast}Y_{t}\right)  \sigma\left(  \omega\right)  dW_{t}\left(
\omega\right)  .
\]
For appropriate $\phi,\xi$ and paths $\mu_{s}$ in measures on $Y$-space one
can consider the indexed family of differential equations, one for each
$\omega,$%
\begin{align*}
dY_{t}\left(  \omega\right)   &  =\phi\left(  Y_{t}\left(  \omega\right)
,\mu_{t}\right)  dt+\xi\left(  Y_{t}\left(  \omega\right)  ,\mu_{t}\right)
\sigma\left(  \omega\right)  dW_{t}\left(  \omega\right)  .\\
&  Y_{0}\left(  \omega\right)  \text{ given}%
\end{align*}
For almost every $\omega$ the path $t\rightarrow\sigma\left(  \omega\right)
W_{t}\left(  \omega\right)  $ is a geometric rough path of finite p-variation
for every $p>2$ based on rescaling $W$ and its Levy area. If $\nu_{t}$ is a
path of finite variation in the space of measures and $\phi,\xi$ are at least
$C^{2+\varepsilon}$ then it will be the case that the rough path solution
$Y_{t}\left(  \omega\right)  $ to this equation will exist and be unique.
Considering all $\omega,$we see that $Y_{t}\left(  \omega\right)  $ is a
random variable and we denote its law by the probability measure $\tilde{\mu
}_{t}.$ Of course, this new path $t\rightarrow\tilde{\mu}_{t}$ in measures
will not in general coincide with the path $t\rightarrow\mu_{t}.$ But it makes
complete sense to ask whether there is a choice $t\rightarrow\mu_{t}$ so that
the resultant measure path $t\rightarrow\tilde{\mu}_{t}$ does coincide with
it. In this case we have a community of individuals evolving according to
their individual preferences in a way that is also consistent with the
dynamics of the population as a whole.

We note that in general having individuals with different volatility results
in a process $t\rightarrow\sigma\left(  \omega\right)  W_{t}\left(
\omega\right)  $ that is far from a semimartingale against the Wiener measure
and using the base filtration. One cannot have $\sigma\left(  \omega\right)  $
measurable in $\mathcal{F}_{0}$ unless one enlarges the filtration or $\sigma$
is constant. The lack of previsibility does not impede the rough path
perspective, and there is no issue at all in setting up the equations. One
theoretically amusing choice for $\sigma$ is to take%
\[
\sigma\left(  \omega\right)  =\frac{1}{\sup_{t\in\left[  0,1\right]
}\left\vert W_{t}\left(  \omega\right)  \right\vert },
\]
which in some sense eliminates enthusiastic outliers in the population.

To move on from posing a meaningful questions to identifying solutions is
actually quite challenging. For example it is not clear, at the level of
generality that we introduce, that the path $t\rightarrow\tilde{\mu}_{t}$ will
have bounded variation or what Banach space to consider it as a path in even
if it does. This raises another issue - in that solving equations such as this
we require the pair $\left(  \mu_{t},W_{t}\left(  \omega\right)  \right)  $ to
be a rough path which normally requires extra data unless one has good control
on $t\rightarrow\mu_{t}$ so we have to make some compromises. No doubt there
is much that can be refined and taken further.

Let $t\rightarrow\mu_{t}$ be a path in the space of probability measures
representing a putative evolution of the population $Y_{t}\left(
\omega\right)  .$ We introduce the "occupation" measure process $\Gamma
_{t}:=\int_{0}^{t}\mu_{t}dt$ and note that it is monotone increasing and
Lipschitz with norm one in the total variation norm on measures. Let
$\theta\left(  y\right)  \mu=\int\phi\left(  y,y^{\prime}\right)  \mu\left(
dy^{\prime}\right)  ,$ then $\theta$ can be viewed as a linear map from our
space of measures to vector fields on the $Y-$space.

We make two significant simplifications to make the problem more tractable:

\begin{enumerate}
\item We only allow so-called weak interactions between the individual and the
population which take place only in the drift component of the equation i.e.
$\xi\left(  Y_{t}\left(  \omega\right)  ,\mathbb{\nu}_{\ast}Y_{t}\right)
$=$\xi\left(  Y_{t}\left(  \omega\right)  \right)  .$

\item The interaction between the individual and the population admits a
superposition principle.
\end{enumerate}

Together these imply that we can write the interaction between $Y,$ its
preferences $W$ and the distribution $\nu$ of the community in the following
form%
\[
dY_{t}\left(  \omega\right)  =\theta\left(  Y_{t}\left(  \omega\right)
\right)  d\Gamma_{t}+\xi\left(  Y_{t}\left(  \omega\right)  \right)
dX_{t}\left(  \omega\right)  .
\]
We then look for fixed points of the map that takes $t\rightarrow\mu_{t}$ to
$t\rightarrow\tilde{\mu}_{t}.$ Since $\Gamma$ has bounded variation this
equation poses fewer technical problems than the general case but still allows
discussion of the Vlasov type problems discussed initially.

The paper is structures as follows. In Section \ref{rp} we spend some time
setting up our notation for the rough path framework; this is the mathematical
technology we use to model the community. Section \ref{finite} then explores
the special case where the law of the preferences is given by a
finitely-supported probability measure on the space of (geometric) rough
paths. Here we prove an existence and uniqueness theorem for the law of the
nonlinear McKean-Vlasov RDE. The methodology here is distinct from that used
later to prove the (more general) result for non-discrete measures. But this
simple case allows us to see very clearly how the weak interaction assumption,
combined with the LV Extension Theorem of \cite{LV} gives rise to the
uniqueness of fixed points. In Section \ref{fps} we proceed with the roadmap
sketched out above. We first present some Gronwall inequalities for rough
differential equations, developing the deterministic estimates from \cite{FV}
and focusing particularly on the conditions needed to ensure the integrability
of the estimates. We make use of the recent paper \cite{CLL} in showing that
these conditions are satisfied for a wide range of preference measures. We
then present conditions that ensure the existence and uniqueness of fixed
points, and discuss the their continuity in the measure on preferences.
Finally, Section \ref{propa} establishes propagation of chaos (\`{a} la
Sznitman \cite{sznitman}) for the convergence of the finite particle system.
We note that this paper has already lead to follow-up work (see, e.g.,
\cite{ball}); we discuss other possible applications of our results.

\section{Preliminaries on rough path theory\label{rp}}

\label{Sect:PRP}There are now a wealth of resources on rough path theory, e.g.
\cite{car}, \cite{FV},\cite{FH},\cite{LQ02} . Rather than give an overview, we
will focus on the notation we need for the current application and direct the
reader to references where appropriate. We first recall the notion of
the\textit{ truncated signature} of a parameterised path in $C^{1-var}\left(
\left[  0,T\right]  ,%
%TCIMACRO{\U{211d} }%
%BeginExpansion
\mathbb{R}
%EndExpansion
^{d}\right)  $ (the set of continuous paths of bounded variation), this is:
\[
S_{N}\left(  x\right)  _{s,t}:=1+\sum_{k=1}^{N}\int_{s<t_{1}<t_{2}%
<....<t_{k}<t}dx_{t_{1}}\otimes dx_{t_{2}}\otimes...\otimes dx_{t_{k}}\in
T^{N}\left(
%TCIMACRO{\U{211d} }%
%BeginExpansion
\mathbb{R}
%EndExpansion
^{d}\right)  .
\]
Where $T^{N}\left(
%TCIMACRO{\U{211d} }%
%BeginExpansion
\mathbb{R}
%EndExpansion
^{d}\right)  =\oplus$ $_{i=0}^{N}\left(
%TCIMACRO{\U{211d} }%
%BeginExpansion
\mathbb{R}
%EndExpansion
^{d}\right)  ^{\otimes i}$denotes the truncated tensor algebra. We use
$\pi_{n}$ to denote the canonical projection
\[
\pi_{n}:T^{N}\left(
%TCIMACRO{\U{211d} }%
%BeginExpansion
\mathbb{R}
%EndExpansion
^{d}\right)  \rightarrow\left(
%TCIMACRO{\U{211d} }%
%BeginExpansion
\mathbb{R}
%EndExpansion
^{d}\right)  ^{\otimes n},\text{ }n=0,1,...,N.
\]
For $\mathbf{x}^{n}$ in $\left(
%TCIMACRO{\U{211d} }%
%BeginExpansion
\mathbb{R}
%EndExpansion
^{d}\right)  ^{\otimes n}$ we define $\mathbf{x}^{n;\left(  i_{1}%
,...,i_{n}\right)  }$ to be the real number%
\[
\mathbf{x}^{n;\left(  i_{1},...,i_{n}\right)  }=\left(  e_{i_{1}}^{\ast
}\otimes...\otimes e_{i_{n}}^{\ast}\right)  \left(  \mathbf{x}^{n}\right)
=:\left\langle e_{\left(  i_{1},...,i_{n}\right)  }^{\ast},\mathbf{x}%
^{n}\right\rangle ,
\]
where $e_{1}^{\ast},...,e_{d}^{\ast}$ denote the standard dual basis vectors.
We equip each $\left(
%TCIMACRO{\U{211d} }%
%BeginExpansion
\mathbb{R}
%EndExpansion
^{d}\right)  ^{\otimes n}$ with a compatible tensor norm $\left\vert
\mathbf{\cdot}\right\vert _{\left(
%TCIMACRO{\U{211d} }%
%BeginExpansion
\mathbb{R}
%EndExpansion
^{d}\right)  ^{\otimes n}}$, and let%
\[
d_{N}\left(  \mathbf{g,h}\right)  :=\max_{i=1,...,N}\left\vert \pi_{i}\left(
\mathbf{g-h}\right)  \right\vert _{\left(
%TCIMACRO{\U{211d} }%
%BeginExpansion
\mathbb{R}
%EndExpansion
^{d}\right)  ^{\otimes i}}.
\]

It is a well-known that the path $S_{N}\left(  x\right)  $ in fact takes
values in the step-$N$ free nilpotent group with $d$ generators, which we
denote $G^{N}\left(
%TCIMACRO{\U{211d} }%
%BeginExpansion
\mathbb{R}
%EndExpansion
^{d}\right)  $. Motivated by this, we may consider the set of such
group-valued paths%

\[
\mathbf{x}_{t}=\left(  1,\mathbf{x}_{t}^{1},...,\mathbf{x}_{t}^{\lfloor
p\rfloor}\right)  \in G^{\lfloor p\rfloor}\left(
%TCIMACRO{\U{211d} }%
%BeginExpansion
\mathbb{R}
%EndExpansion
^{d}\right)  ,
\]
for $p\geq1.$We can then describe the set of "norms" on $G^{\lfloor p\rfloor
}\left(
%TCIMACRO{\U{211d} }%
%BeginExpansion
\mathbb{R}
%EndExpansion
^{d}\right)  $ which are \textit{homogeneous} with respect to the natural
scaling operation on the tensor algebra (see \cite{FV} for definitions and
details). The subset of these so-called homogeneous norms which are symmetric
and sub-additive (\cite{FV}) give rise to genuine metrics on $G^{\lfloor
p\rfloor}\left(
%TCIMACRO{\U{211d} }%
%BeginExpansion
\mathbb{R}
%EndExpansion
^{d}\right)  .$ And these metrics\ in turn give rise to the notion of a
homogeneous $p$-variation metric $d_{p\text{-var}}$ on the $G^{\lfloor
p\rfloor}\left(
%TCIMACRO{\U{211d} }%
%BeginExpansion
\mathbb{R}
%EndExpansion
^{d}\right)  $-valued paths, a typical example being the Carnot-Caratheodory
(CC) metric $d_{CC}$. The group structure provides a natural notion of
increment, namely $\mathbf{x}_{s,t}:=\mathbf{x}_{s}^{-1}\otimes\mathbf{x}_{t}$
and we may then define
\begin{equation}
d_{p-\text{var;}\left[  0,T\right]  }\left(  \mathbf{x,y}\right)  :=\left\vert
\left\vert \mathbf{x-y}\right\vert \right\vert _{p\text{-var;}\left[
0,T\right]  }:=\left(  \sup_{D=\left(  t_{j}\right)  }\sum_{j:t_{j}\in
D}d_{CC}\left(  \mathbf{x}_{t_{j},t_{j+1}},\mathbf{y}_{t_{j},t_{j+1}}\right)
\right)  ^{1/p}. \label{homogeneous norm}%
\end{equation}
If (\ref{homogeneous norm}) is finite then, $\omega_{\mathbf{x}}\left(
s,t\right)  :=\left\vert \left\vert \mathbf{x}\right\vert \right\vert
_{p\text{-var;}\left[  s,t\right]  }^{p}(:=\left\vert \left\vert
\mathbf{x-1}\right\vert \right\vert _{p\text{-var;}\left[  s,t\right]  }^{p}$)
is a control\footnote{i.e. it is a continuous, non-negative, super-additive
function on the simplex $\Delta_{\lbrack0,T]}={(s,t):0=s<=t=T}$ which vanishes
on the diagonal.}. Also of interest will be the \textit{inhomogeneous rough
path metric} defined by%
\[
\rho_{p-var;\left[  0,T\right]  }\left(  \mathbf{x},\mathbf{y}\right)
:=\left\vert \mathbf{x}_{0}-\mathbf{y}_{0}\right\vert _{T^{\lfloor p\rfloor
}\left(
%TCIMACRO{\U{211d} }%
%BeginExpansion
\mathbb{R}
%EndExpansion
^{d}\right)  }+\max_{i=1,...,\left\lfloor p\right\rfloor }\sup_{D=\left(
t_{j}\right)  }\left(  \sum_{j:t_{j}\in D}\left\vert \pi_{i}\left(
\mathbf{x}_{t_{j},t_{j+1}}-\mathbf{y}_{t_{j},t_{j+1}}\right)  \right\vert
_{\left(
%TCIMACRO{\U{211d} }%
%BeginExpansion
\mathbb{R}
%EndExpansion
^{d}\right)  ^{\otimes i}}^{p/i}\right)  ^{i/p}.
\]
And the $\omega-$modulus inhomogeneous metric,\ with respect to a fixed
control $\omega,$which is defined by
\[
\rho_{p-\omega;\left[  0,T\right]  }\left(  \mathbf{x},\mathbf{y}\right)
=\left\vert \mathbf{x}_{0}-\mathbf{y}_{0}\right\vert _{T^{\lfloor p\rfloor
}\left(
%TCIMACRO{\U{211d} }%
%BeginExpansion
\mathbb{R}
%EndExpansion
^{d}\right)  }+\max_{i=1,...,\left\lfloor p\right\rfloor }\frac{\left\vert
\pi_{i}\left(  \mathbf{x}_{s,t}-\mathbf{y}_{s,t}\right)  \right\vert }%
{\omega\left(  s,t\right)  ^{1/p}}.
\]

The space of \textit{weakly geometric} $p-$rough paths will be denoted
$WG\Omega_{p}\left(
%TCIMACRO{\U{211d} }%
%BeginExpansion
\mathbb{R}
%EndExpansion
^{d}\right)  $. This is the set of continuous paths with values in $G^{\lfloor
p\rfloor}\left(
%TCIMACRO{\U{211d} }%
%BeginExpansion
\mathbb{R}
%EndExpansion
^{d}\right)  $ (parametrised over some, usually implicit, time interval) such
that (\ref{homogeneous norm}) is finite. A refinement of this notion is the
space of geometric $p-$rough paths, denoted $G\Omega_{p}\left(
%TCIMACRO{\U{211d} }%
%BeginExpansion
\mathbb{R}
%EndExpansion
^{d}\right)  $, which is the closure of
\[
\left\{  S_{\left\lfloor p\right\rfloor }\left(  x\right)  _{0,\cdot}:x\in
C^{1-var}\left(  \left[  0,T\right]  ,%
%TCIMACRO{\U{211d} }%
%BeginExpansion
\mathbb{R}
%EndExpansion
^{d}\right)  \right\}
\]
with respect to the rough path metric $d_{p-\text{var}}.$

We will often end up considering an RDE driven by a path $\mathbf{x}$ in
$WG\Omega_{p}\left(
%TCIMACRO{\U{211d} }%
%BeginExpansion
\mathbb{R}
%EndExpansion
^{d}\right)  $ along a collection of vector fields $V=\left(  V^{1}%
,...,V^{d}\right)  $ on $%
%TCIMACRO{\U{211d} }%
%BeginExpansion
\mathbb{R}
%EndExpansion
^{e}$. And from the point of view of existence and uniqueness results, the
appropriate way to measure the regularity of the $V_{i}$s results turns out to
be the notion of Lipschitz-$\gamma$ (or, simply, Lip-$\gamma$) in the sense of
Stein. This notion provides a norm on the space of such vector fields, which
we denote $\left\vert \cdot\right\vert _{Lip-\gamma}.$ We will often make use
of the shorthand
\[
\left\vert V\right\vert _{Lip-\gamma}=\max_{i=1,...,d}\left\vert
V_{i}\right\vert _{Lip-\gamma}.
\]

Finally, throughout the article we will consider spaces of probabilities
measure on metric spaces $\left(  S,d\right)  .$

\begin{notation}
We will use $\mathcal{M}\left(  S\right)  $ to denote the space of probability
measures on $\left(  S,\mathcal{B}\left(  S\right)  \right)  .$ For $p>0,$
$\mathcal{M}_{p}\left(  S\right)  $ will represent the subset of
$\mathcal{M}\left(  S\right)  $ which have finite $p^{th}$-moment in the sense
that
\[
\int_{S}d\left(  s_{0},s\right)  ^{p}\mu\left(  ds\right)  <\infty,
\]
for some (and hence every) $s_{0}\in S$
\end{notation}

It will be convenient to have a shorthand notation for some of these spaces.

\begin{notation}
\label{not}We will write
\[
\left(  S_{N,e},\sigma_{N}\right)  \text{ for }\left(  G^{N}\left(
%TCIMACRO{\U{211d} }%
%BeginExpansion
\mathbb{R}
%EndExpansion
^{e}\right)  ,d_{N}\right)  ,\text{ and }\left(  P_{p,e},\rho_{p}\right)
\text{ for }\left(  G\Omega_{p}\left(
%TCIMACRO{\U{211d} }%
%BeginExpansion
\mathbb{R}
%EndExpansion
^{d}\right)  ,\rho_{p-var;\left[  0,T\right]  }\right)  .
\]
Furthermore $\left(  S_{t,e},\sigma_{t}\right)  $ will mean $\left(
S_{\lfloor t\rfloor,e},\sigma_{\lfloor t\rfloor}\right)  $ whenever $t$ is not
an integer.
\end{notation}

\section{\bigskip Weakly interacting communties\label{finite}}

Let $\left(  \mu_{t}\right)  _{t\in\left[  0,T\right]  }$ be a family of
probability measures on $G^{\lfloor p\rfloor}\left(
%TCIMACRO{\U{211d} }%
%BeginExpansion
\mathbb{R}
%EndExpansion
^{d}\right)  $ parameterised by time. The main object of study in this paper
will be solutions to rough differential equations which incorporate
\textit{weak mean-field interactions }with $\left(  \mu_{t}\right)
_{t\in\left[  0,T\right]  }.$ By this we mean equations of the following type
\begin{equation}
d\mathbf{y}_{t}=\int_{G^{\lfloor p\rfloor}\left(
%TCIMACRO{\U{211d} }%
%BeginExpansion
\mathbb{R}
%EndExpansion
^{d}\right)  }\sigma\left(  y_{t},\pi_{1}\mathbf{y}\right)  \mu_{t}\left(
d\mathbf{y}\right)  dt+V\left(  y_{t}\right)  d\mathbf{x,}\text{ }y\left(
0\right)  =y_{0}. \label{mf}%
\end{equation}
The rough path $\mathbf{x}$ flows along the vector fields $V=\left(
V^{1},...,V^{d}\right)  ,$ but the resulting trajectory is also influenced
by\ $\left(  \mu_{t}\right)  _{t\in\left[  0,T\right]  }$ through the
\textit{interaction kernel} $\sigma.$ Assuming enough regularity on the path
$t\mapsto\mu$ we may define the integral
\[
\gamma_{t}^{\mu}=\int_{0}^{t}\mu_{s}ds,
\]
a continuous bounded variation path in an appropriately chosen ambient Banach
space. It is convenient to rewrite the main equation (\ref{mf}) as%
\begin{equation}
d\mathbf{y}_{t}=V^{0}\left(  y_{t}\right)  d\gamma_{t}^{\mu}+V\left(
y_{t}\right)  d\mathbf{x}_{t}+V\left(  y_{t}\right)  d\mathbf{x}_{t},\text{
}y\left(  0\right)  =y_{0}, \label{IRDE}%
\end{equation}
where $V^{0}$ and $\sigma$ are related by%
\[
V^{0}\left(  y\right)  \left(  \mu\right)  =\int_{G^{\lfloor p\rfloor}\left(
%TCIMACRO{\U{211d} }%
%BeginExpansion
\mathbb{R}
%EndExpansion
^{d}\right)  }\sigma\left(  y,\pi_{1}\mathbf{y}\right)  \mu\left(
d\mathbf{y}\right)  .
\]
We will discuss the detail of this construction in Section \ref{fps}. In the
cases we consider, $\left(  \mu_{t}\right)  _{t\in\left[  0,T\right]  }$ will
be derived from the marginal distributions of a probability measure in
$\mathcal{M}\left(  P_{p,e}\right)  ;$ i.e. those derived from pushing-forward
under the evaluation maps $\psi_{t}\left(  \mathbf{x}\right)  =\mathbf{x}_{t}%
$, $t\in\left[  0,T\right]  .$ We denote a solution to (\ref{IRDE}) by
$\Theta_{V^{0},V}\left(  \mu,y_{0},\mathbf{x}\right)  ,$ and fix a probability
measure $u_{0}\times\nu$ on $%
%TCIMACRO{\U{211d} }%
%BeginExpansion
\mathbb{R}
%EndExpansion
^{e}\times G\Omega_{p}\left(
%TCIMACRO{\U{211d} }%
%BeginExpansion
\mathbb{R}
%EndExpansion
^{d}\right)  .$ $u_{0}$ describes the initial configuration of the particles
and $\nu$ is the law of the preferences or, more conveniently, the
\textit{preference measure.} By taking a realisation $\left(  Y_{0}%
,\mathbf{X}\right)  $ of $u_{0}\times\nu$ on some probability space $\left(
\Omega,\mathcal{F},P\right)  ,$ and then using $\mathbf{X}$ to solve
(\ref{IRDE}) we will have constructed a well-defined map $\Psi_{\nu}$ from the
space $\mathcal{M}\left(  G\Omega_{p}\left(
%TCIMACRO{\U{211d} }%
%BeginExpansion
\mathbb{R}
%EndExpansion
^{e}\right)  \right)  $ to itself given by the push-forward$:$
\[
\Psi_{\nu}:\mu\mapsto\left[  \Theta_{V^{0},V}\left(  \mu,\cdot,\cdot\right)
\right]  _{\ast}\left(  u_{0}\times\nu\right)  .
\]
$\mu$ will then be fixed point of this map if and only if $\Theta_{V^{0}%
,V}\left(  \mu,Y_{0},\mathbf{X}\right)  $ is a solution the (nonlinear)
McKean-Vlasov-type RDE%
\begin{equation}
\left\{
\begin{array}
[c]{c}%
d\mathbf{Y}_{t}=V\left(  Y_{t}^{\mu}\right)  d\mathbf{X}_{t}+V^{0}\left(
Y_{t}^{\mu}\right)  d\gamma_{t}^{\mu}\\
\text{Law}\left(  \mathbf{Y}\right)  =\mu,\text{ Law}\left(  Y_{0}\right)
=u_{0}%
\end{array}
\right.  . \label{mckv}%
\end{equation}
A key objective of this paper is to demonstrate that there exist unique fixed
points to (\ref{mckv}) for a class of preference measures which extend far
beyond the usual semimartingale setting.

We first spend time developing an important special case, namely when $\nu$ is
a finitely-supported discrete measure of the form%
\[
\nu=\sum_{i=1}^{N}\lambda_{i}\delta_{\mathbf{x}_{i}}\in\mathcal{M}\left(
P_{p,d}\right)  .
\]
In this setting, we can attempt to resolve the fixed-point-problem
(\ref{mckv}) by solving the system of RDEs%
\begin{equation}
d\mathbf{y}_{i}\left(  t\right)  =V\left(  y_{i}\left(  t\right)  \right)
d\mathbf{x}_{i}\left(  t\right)  +\sum_{j=1}^{N}\lambda_{j}\sigma\left(
y_{i}\left(  t\right)  ,y_{j}\left(  t\right)  \right)  d\gamma_{t}^{\mu
},\text{ \ }y_{i}\left(  0\right)  =y_{i}\label{system}%
\end{equation}
for $i=1,....,N.$ And then defining the measure to be the convolution
\[
\mu=u_{0}^{\otimes N}\ast\left(  \sum_{i=1}^{N}\lambda_{i}\delta
_{\mathbf{y}_{i}}\right)  ,
\]
where $u_{0}^{\otimes n}$ is the $n$-fold product measure of $u_{0}$. More
precisely this means that
\[
\mu\left(  A\right)  =\sum_{i=1}^{N}\lambda_{i}\int_{%
%TCIMACRO{\U{211d} }%
%BeginExpansion
\mathbb{R}
%EndExpansion
^{e}\times...\times%
%TCIMACRO{\U{211d} }%
%BeginExpansion
\mathbb{R}
%EndExpansion
^{e}}\delta_{\mathbf{y}_{i}^{y_{i}}}\left(  A\right)  u_{0}\left(
dy_{1}\right)  ...u_{0}\left(  dy_{N}\right)  ,\text{ }\forall A\in
\mathcal{B}\left(  P_{p,e}\right)
\]
where we have written $\mathbf{y}_{i}^{y_{i}}$ to emphasise the dependence of
$\mathbf{y}_{i}$ on its starting point $y_{i}$. With $\mu$ defined in this way
we would expect that $\Psi_{\nu}\left(  \mu\right)  =\mu,$ and indeed this
approach will work for smooth preferences. In the rough case $\left(
p\geq2\right)  $ however things are more complex. Here in order to solve
(\ref{system}) we need to define \textit{a priori} the cross-iterated
integrals between the (components of) the preferences $\mathbf{x}^{i}$ and
$\mathbf{x}^{j}.$ The LV Extension Theorem (\cite{LV}) guarantees that this
can always be done, but in general there are many choices for the extension.
To ensure uniqueness of the fixed point, we need to check that the resulting
solution is not sensitive to this choice; the remainder of this section will
present conditions which will guarantee this.

The results of this section will later be subsumed by the general fixed point
theorem of Section \ref{fps}. Nonetheless they are important for three
reasons. Firstly they expose, in an original and lucid way, the importance of
the weakly interacting structure; secondly, they highlight the main obstacle
in extending the analysis to general interactions, in a way that cannot be
easily discerned from the general fixed point result; thirdly, they crucially
underlie our later treatment of the convergent behaviour of the finite
particle system.

\subsection{A two-particle system}

To make clear the structure of the argument, we first deal with the case where
$N=2$ and $p\in\left(  2,3\right)  ;$ i.e. the preference measure is supported
on only two geometric rough paths in $G\Omega_{p}\left(
%TCIMACRO{\U{211d} }%
%BeginExpansion
\mathbb{R}
%EndExpansion
^{d}\right)  .$ We write $\nu=\lambda\delta_{\mathbf{x}_{1}}+\left(
1-\lambda\right)  \delta_{\mathbf{x}_{2}}.$ By the LV Extension theorem there
exists an element $\mathbf{x}$ in $WG\Omega_{p}\left(
%TCIMACRO{\U{211d} }%
%BeginExpansion
\mathbb{R}
%EndExpansion
^{2d}\right)  $ which lifts $\left(  x_{1},x_{2}\right)  $ consistently with
$\mathbf{x}_{1}$ and $\mathbf{x}_{2}$ in the sense that,%
\[
P_{j}\mathbf{x}^{1}=\mathbf{x}_{j}^{1}\text{ and }\left(  P_{j}\otimes
P_{j}\right)  \mathbf{x}^{2}=\mathbf{x}_{j}^{2}\text{ for }j=1,2
\]
where $P_{1},P_{2}:%
%TCIMACRO{\U{211d} }%
%BeginExpansion
\mathbb{R}
%EndExpansion
^{2e}\cong%
%TCIMACRO{\U{211d} }%
%BeginExpansion
\mathbb{R}
%EndExpansion
^{e}\times%
%TCIMACRO{\U{211d} }%
%BeginExpansion
\mathbb{R}
%EndExpansion
^{e}\rightarrow%
%TCIMACRO{\U{211d} }%
%BeginExpansion
\mathbb{R}
%EndExpansion
^{e}$ are defined by $P_{1}z=x$ and $P_{2}z=y$ when $z=\left(  x,y\right)  ,$
and where $\left(  P_{j}\otimes P_{j}\right)  \left(  z_{1}\otimes
z_{2}\right)  =P_{j}z_{1}\otimes P_{j}z_{2}\in\left(
%TCIMACRO{\U{211d} }%
%BeginExpansion
\mathbb{R}
%EndExpansion
^{2e}\right)  ^{\otimes2}.$ We can simplify this by writing
\begin{equation}
\mathbf{x}^{1}=\left(  \mathbf{x}_{1}^{1},\mathbf{x}_{2}^{1}\right)  \in%
%TCIMACRO{\U{211d} }%
%BeginExpansion
\mathbb{R}
%EndExpansion
^{2d},\text{ \ }\mathbf{x}^{2}=\left(
\begin{array}
[c]{cc}%
\mathbf{x}_{1}^{2} & \ast\\
\ast & \mathbf{x}_{2}^{2}%
\end{array}
\right)  \in\left(
%TCIMACRO{\U{211d} }%
%BeginExpansion
\mathbb{R}
%EndExpansion
^{2d}\right)  ^{\otimes2},\label{lift}%
\end{equation}
under the obvious identifications. The only constraint on the terms $(\ast)$
arises from the need to make $\mathbf{x}$ weakly geometric. Given such an
extension, we can solve the following RDE uniquely%
\begin{equation}
d\mathbf{y}_{t}=W^{0}\left(  y_{t}\right)  dt+W\left(  y_{t}\right)
d\mathbf{x}_{t},\text{ }y\left(  0\right)  =\left(  y_{1}\left(  0\right)
,y_{1}\left(  0\right)  \right)  \in%
%TCIMACRO{\U{211d} }%
%BeginExpansion
\mathbb{R}
%EndExpansion
^{2e}.\label{sys}%
\end{equation}
Wherein $W=\left(  W^{1},...,W^{2d}\right)  $ is the collection of vector
fields on $%
%TCIMACRO{\U{211d} }%
%BeginExpansion
\mathbb{R}
%EndExpansion
^{2e}\cong%
%TCIMACRO{\U{211d} }%
%BeginExpansion
\mathbb{R}
%EndExpansion
^{e}\times%
%TCIMACRO{\U{211d} }%
%BeginExpansion
\mathbb{R}
%EndExpansion
^{e}$ defined by%
\begin{align}
W^{i}\left(  y_{1},y_{2}\right)   &  =\left(  V^{i}\left(  y_{1}\right)  ,0_{%
%TCIMACRO{\U{211d} }%
%BeginExpansion
\mathbb{R}
%EndExpansion
^{e}}\right)  ^{t},\text{ }i=1,....,d\label{vfs}\\
W^{i}\left(  y_{1},y_{2}\right)   &  =\left(  0_{%
%TCIMACRO{\U{211d} }%
%BeginExpansion
\mathbb{R}
%EndExpansion
^{e}},V^{i}\left(  y_{2}\right)  \right)  ^{t},\text{ }i=d+1,....,2d,\nonumber
\end{align}
and the interaction is transmitted through
\[
W^{0}\left(  y_{1},y_{2}\right)  =\lambda\left(  \sigma\left(  y_{1}%
,y_{1}\right)  ,\sigma\left(  y_{2},y_{1}\right)  \right)  ^{t}+\left(
1-\lambda\right)  \left(  \sigma\left(  y_{1},y_{2}\right)  ,\sigma\left(
y_{2},y_{2}\right)  \right)  ^{t}.
\]
By writing the solution $\mathbf{y}$ in terms of its projections
\begin{equation}
\mathbf{y}^{1}=\left(  \mathbf{y}_{1}^{1},\mathbf{y}_{2}^{1}\right)  \in%
%TCIMACRO{\U{211d} }%
%BeginExpansion
\mathbb{R}
%EndExpansion
^{2e},\text{ }\mathbf{y}^{2}=\left(
\begin{array}
[c]{cc}%
\mathbf{y}_{1}^{2} & \ast\\
\ast & \mathbf{y}_{2}^{2}%
\end{array}
\right)  \in\left(
%TCIMACRO{\U{211d} }%
%BeginExpansion
\mathbb{R}
%EndExpansion
^{2e}\right)  ^{\otimes2},\label{proj}%
\end{equation}
we can obtain $\mathbf{y}_{i}=\left(  1,\mathbf{y}_{i}^{1},\mathbf{y}_{i}%
^{2}\right)  \in WG\Omega_{p}\left(
%TCIMACRO{\U{211d} }%
%BeginExpansion
\mathbb{R}
%EndExpansion
^{e}\right)  .$ We will prove that the probability measure
\begin{equation}
\mu=u_{0}^{\otimes2}\ast\left[  \lambda\delta_{\mathbf{y}_{1}}+\left(
1-\lambda\right)  \delta_{\mathbf{y}_{2}}\right]  \in\mathcal{M}\left(
P_{p,e}\right)  \label{fp}%
\end{equation}
is a fixed point of the map $\Psi_{\nu}.$ We will then show that every fixed
point has the form (\ref{fp}); i.e. its suppport is $\left\{  \mathbf{y}%
_{1},\mathbf{y}_{2}\right\}  ,$ where $\mathbf{y}_{1},\mathbf{y}_{2}$ are
projections of \textit{the} solution to (\ref{sys}) driven by \textit{any}
extension $\mathbf{x.}$ The uniqueness of the fixed point will follow by
proving that the projections $\mathbf{y}_{1}$ and $\mathbf{y}_{2}$ do not
depend on the extension (and hence neither does the measure (\ref{fp})). This
is the essential content of the following proposition.

\begin{proposition}
\label{2part}Let $2<p<3$ and $y_{1}\left(  0\right)  ,y_{2}\left(  0\right)
\in%
%TCIMACRO{\U{211d} }%
%BeginExpansion
\mathbb{R}
%EndExpansion
^{e}.$ Suppose that $\mathbf{x}_{1}$ and $\mathbf{x}_{2}$ are two elements of
$G\Omega_{p}\left(
%TCIMACRO{\U{211d} }%
%BeginExpansion
\mathbb{R}
%EndExpansion
^{d}\right)  .$ Assume that $W^{0}$ $\ $and $W=\left(  W^{1},...,W^{2d}%
\right)  $ are, respectively, vector fields in $Lip^{\beta}\left(
%TCIMACRO{\U{211d} }%
%BeginExpansion
\mathbb{R}
%EndExpansion
^{2e}\right)  $ and $Lip^{\gamma}\left(
%TCIMACRO{\U{211d} }%
%BeginExpansion
\mathbb{R}
%EndExpansion
^{2e}\right)  $ for some $\beta>1$ and $\gamma>p$ $.$ Let $\mathbf{x}$ be any
element of $WG\Omega_{p}\left(
%TCIMACRO{\U{211d} }%
%BeginExpansion
\mathbb{R}
%EndExpansion
^{2d}\right)  $ which extends $\mathbf{x}_{1}$ and $\mathbf{x}_{2}$ in the
sense of (\ref{lift}), and let $\mathbf{y}$ be the unique solution in
$WG\Omega_{p}\left(
%TCIMACRO{\U{211d} }%
%BeginExpansion
\mathbb{R}
%EndExpansion
^{2e}\right)  $ in to the RDE (\ref{sys}) driven by $\mathbf{x.}$ Then
$\mathbf{y}$ has the property that its projections $\mathbf{y}_{1}%
,\mathbf{y}_{2}$ (as given in (\ref{proj})) are elements of $WG\Omega
_{q}\left(
%TCIMACRO{\U{211d} }%
%BeginExpansion
\mathbb{R}
%EndExpansion
^{e}\right)  $ which depend on $\mathbf{x}_{1}$ and $\mathbf{x}_{2},$ but not
on the extension $\mathbf{x.}$
\end{proposition}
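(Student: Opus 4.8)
The plan is to show that the projections $\mathbf{y}_1,\mathbf{y}_2$ of the solution $\mathbf{y}$ to (\ref{sys}) satisfy a closed system of RDEs that does not reference the off-diagonal ($\ast$) entries of the extension $\mathbf{x}$, and then to invoke uniqueness for that closed system. The starting observation is that the vector fields $W^i$ in (\ref{vfs}) are "block-diagonal": $W^i$ for $i\le d$ acts only on the first $\mathbb{R}^e$-factor through $V^i(y_1)$, and $W^i$ for $d<i\le 2d$ acts only on the second factor through $V^i(y_2)$. Consequently, writing $\mathbf{y}_1$ for the first projection, the first component $\mathbf{y}_1^1$ evolves as $d\mathbf{y}_1^1 = W^0_{(1)}(y_1,y_2)\,dt + V(y_1)\,d\mathbf{x}_1^1$, where $W^0_{(1)}$ is the first $\mathbb{R}^e$-block of $W^0$, and this already only sees $\mathbf{x}_1^1$ (and $y_2$ through the drift). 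The subtlety, as flagged in the text, is at the level-two component $\mathbf{y}_1^2$: a priori its evolution could pick up the cross terms $\mathbf{x}^2$ coming from the mixed increments $\mathbf{x}_1^1\otimes\mathbf{x}_2^1$-type entries. The heart of the argument is to check that, because the two "halves" of $W$ commute in the sense that the bracket of a vector field from the first block with one from the second block vanishes (they act on disjoint coordinates), the iterated integrals feeding $\mathbf{y}_1^2$ only involve $\mathbf{x}_1$, never the $(\ast)$ entries.

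Concretely, I would proceed as follows. First, recall the explicit Taylor/Davie-type expansion for the RDE solution (as in \cite{FV}): over a small interval $[s,t]$,
\[
\mathbf{y}_{s,t} \approx \exp\!\Big(\sum_i W^i \,\pi_1(\mathbf{x}_{s,t})^i + \sum_{i,j} W^i W^j \,\pi_2(\mathbf{x}_{s,t})^{ij} + \text{(drift terms)}\Big),
\]
with the error controlled by the $p$-variation of $\mathbf{x}$. Second, observe that for the first projection $\mathbf{y}_1$, the relevant differential operators are $P_1 W^i$, and $P_1 W^i = 0$ whenever $i>d$ because those vector fields have zero first component; moreover for $i,j\le d$ the second-order term $P_1(W^i W^j)$ only involves $V^i V^j$ evaluated at $y_1$, paired with $\pi_2(\mathbf{x}_{s,t})^{ij}$, $i,j\le d$, which by the consistency condition (\ref{lift}) equals $\mathbf{x}_1^2$ — a datum of $\mathbf{x}_1$ alone. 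The cross terms $\pi_2(\mathbf{x}_{s,t})^{ij}$ with $i\le d<j$ are multiplied by $P_1(W^i W^j)$, and I would show this vanishes: $W^j$ for $j>d$ has zero first component, so applying it and then projecting with $P_1$ (or differentiating the first-component-valued field $W^i$) kills the term. Thus the level-$\le 2$ increments of $\mathbf{y}_1$ depend on $\mathbf{x}$ only through $\mathbf{x}_1$ and through $y_2$ in the drift.

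Third, symmetrically the same holds for $\mathbf{y}_2$ with $\mathbf{x}_2$. So the pair $(\mathbf{y}_1,\mathbf{y}_2)$ satisfies a system in which the driving rough paths are genuinely $\mathbf{x}_1$ and $\mathbf{x}_2$ (plus the bounded-variation drift $t$, with $W^0$ coupling $y_1$ and $y_2$ at the level of the reduced paths $\pi_1$), and no auxiliary cross-integrals appear. Fourth, I would argue that this reduced system has a unique solution: it is an RDE driven by $(\mathbf{x}_1,\mathbf{x}_2,t)$ — note $(\mathbf{x}_1,\mathbf{x}_2)$ need not itself be a rough path over $\mathbb{R}^{2d}$, but because the two blocks of vector fields never interact through second-order brackets, one can treat it as two coupled equations each driven by a genuine $p$-rough path, with the coupling entering only through $C^{\beta}$, $\beta>1$, drift; existence/uniqueness then follows from the standard rough-path fixed-point theorem together with a Gronwall/Picard argument in the drift coupling. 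Since the reduced system makes no reference to the extension, its solution — hence $\mathbf{y}_1,\mathbf{y}_2$ — is independent of $\mathbf{x}$. Finally, the assertion that $\mathbf{y}_1,\mathbf{y}_2 \in WG\Omega_q(\mathbb{R}^e)$ for the relevant $q$ follows from the usual regularity estimate for RDE solutions in \cite{FV}, since the driving signal has finite $p$-variation and $\gamma>p$.

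The main obstacle I expect is making the "block decoupling at level two" rigorous rather than heuristic: one must be careful that the RDE solution map, when restricted/projected, genuinely satisfies the reduced equation and not merely approximately — i.e. controlling that the projection of the full solution equals the solution of the projected equation, uniformly in the mesh, which amounts to checking that the Davie expansion's projection is exactly the Davie expansion of the reduced system up to the same order of error. Equivalently, this is the statement that the flow of $W$ on $\mathbb{R}^{2e}$ restricts to (a skew product over) flows on each $\mathbb{R}^e$-factor; the commutativity of the two vector-field blocks is what guarantees the higher iterated integrals factor correctly. Handling the drift coupling $W^0$ (which is only $Lip^\beta$, $\beta>1$, not $Lip^\gamma$) inside this decoupling needs a little care but is routine since $\gamma_t^\mu$ — here just $t$ — has bounded variation.
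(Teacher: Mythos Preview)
Your proposal is correct and rests on the same core observation as the paper: the block-diagonal structure of $W$ forces the cross iterated integrals to drop out of the Davie expansion of the projections. Where you differ is in packaging and in the final step. The paper does not set up a separate ``reduced system'' and then argue uniqueness for it; instead it works entirely inside the Davie/Euler approximation (\ref{approx}) for the full solution in $\mathbb{R}^{2e}$. It decomposes the level-two contribution $DW(y)W(y)\mathbf{x}^2_{s,t}$ into its symmetric part (which depends only on $\mathbf{x}^1_{s,t}$) and its antisymmetric part, rewritten as $\sum_{p,q}[W^p,W^q]\,\mathbf{x}^{2;(p,q)}_{s,t}$; the Lie brackets with $p\le d<q$ (and by antisymmetry $q\le d<p$) vanish identically because the two blocks act on disjoint coordinates. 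Hence every summand of the Davie scheme for $y$ depends only on $\mathbf{x}^1,\mathbf{x}_1^2,\mathbf{x}_2^2$, and the limit $y$ inherits this. For the second level the paper uses the approximation (\ref{sec}) for $\mathbf{y}^2$ and applies $(P_j\otimes P_j)$ directly, noting $(P_1\otimes P_1)(W\otimes W)=(V,0)\otimes(V,0)$, which again eliminates the cross blocks.

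The practical difference is that your step four (well-posedness of the coupled reduced system) and the ``main obstacle'' you flag simply do not arise in the paper's argument: once one has shown that two different extensions produce \emph{identical} Davie approximations, convergence of the scheme (standard from \cite{FV},\cite{FH}) immediately gives the same limit, with no need to pose or solve a new system. Your route works, but the symmetric/antisymmetric-plus-Lie-bracket reduction is both shorter and sidesteps the delicate point of matching the projected solution to the solution of a projected equation.
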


\begin{proof}
We prove that $\mathbf{y}_{1}$ and $\mathbf{y}_{2},$ the projections of the
solution to (\ref{sys}), depend only on $\mathbf{x}_{1}$ and $\mathbf{x}_{2}$
and not the iterated integral between them. In other words, that
$\mathbf{y}_{1}$ and $\mathbf{y}_{2}$ have meaning independently of the terms
$\ast$ needed to specify the joint lift in (\ref{lift}). To see this we recall
(\cite{FH}) that $y_{s,t}$, the increment of the path level solution over
$\left[  s,t\right]  $, is equal to
\begin{equation}
\lim_{\left\vert D\left[  s,t\right]  \right\vert \rightarrow0}\sum
_{i:t_{i}\in D\left[  s,t\right]  }\left[  W^{0}\left(  y_{t_{i}}\right)
\left(  t_{i+1}-t_{i}\right)  +W\left(  y_{t_{i}}\right)  \mathbf{x}%
_{t_{i},t_{i+1}}^{1}+DW\left(  y_{t_{i}}\right)  W\left(  y_{t_{i}}\right)
\mathbf{x}_{t_{i},t_{i+1}}^{2}\right]  , \label{approx}%
\end{equation}
where $DW\left(  y_{t_{i}}\right)  W\left(  y_{t_{i}}\right)  \left[  x\otimes
y\right]  =DW\left(  y_{t_{i}}\right)  \left[  W\left(  y_{t_{i}}\right)
x\right]  \left[  y\right]  ,$ and $D\left[  s,t\right]  $ denotes a partition
of $\left[  s,t\right]  .$ The last term in the summands in (\ref{approx})
equals%
\[
\frac{1}{2}DW\left(  y_{t_{i}}\right)  W\left(  y_{t_{i}}\right)  \left[
\mathbf{x}_{t_{i},t_{i+1}}^{1}\otimes\mathbf{x}_{t_{i},t_{i+1}}^{1}\right]
+DW\left(  y_{t_{i}}\right)  W\left(  y_{t_{i}}\right)  \mathbf{x}%
_{t_{i},t_{i+1}}^{2;a},
\]
where $\mathbf{x}_{t_{i},t_{i+1}}^{2;a}$ is the anti-symmetric part of the
$2$-tensor $\mathbf{x}_{t_{i},t_{i+1}}^{2}.$ The first term only depends on
$\mathbf{x}_{t_{i},t_{i+1}}^{1}$, and the second term can be simplified to
\[
\sum_{p,q=1}^{2d}\left[  W^{p},W^{q}\right]  \left(  y_{t_{i}}\right)
\mathbf{x}_{t_{i},t_{i+1}}^{2;\left(  p,q\right)  }.
\]
From the definition of $\left(  W^{i}\right)  $ it is easy to see that the Lie
bracket
\[
\left[  W^{p},W^{q}\right]  \equiv0\text{ }\forall p\in\left\{
1,...,d\right\}  \text{ and }\forall q\in\left\{  d+1,...,2d\right\}
\]
(and, therefore, it also vanishes\ for every $p\in\left\{  d+1,...,2d\right\}
$ and $q\in\left\{  1,...,d\right\}  $ by antisymmetry). Each summand in
(\ref{approx}) thus only depends on $\mathbf{x}^{1}$, $\mathbf{x}_{1}^{2}$ and
$\mathbf{x}_{2}^{2},$ but not on the terms of $\mathbf{x}^{2}$ corresponding
to integrals between $\mathbf{x}_{1}^{1}$ and $\mathbf{x}_{2}^{1}$; the same
is hence true of the limit, $y_{s,t}.$

We recall that $\mathbf{y}_{s,t}^{2}$ is the limit as $\left\vert D\left[
s,t\right]  \right\vert \rightarrow0$ of
\begin{equation}
\sum_{i:t_{i}\in D\left[  s,t\right]  }\left[  y_{s,t_{i+1}}\otimes
y_{t_{i},t_{i+1}}+\left[  W\left(  y_{t_{i}}\right)  \otimes W\left(
y_{t_{i}}\right)  \right]  \mathbf{x}_{t_{i},t_{i+1}}^{2}\right]  ,
\label{sec}%
\end{equation}
so that in general $\mathbf{y}^{2}$ \textit{does} depend on the extension.
However, by taking projections the dependence disappears. To see this just let
$P_{1}:%
%TCIMACRO{\U{211d} }%
%BeginExpansion
\mathbb{R}
%EndExpansion
^{e}\times%
%TCIMACRO{\U{211d} }%
%BeginExpansion
\mathbb{R}
%EndExpansion
^{e}\rightarrow%
%TCIMACRO{\U{211d} }%
%BeginExpansion
\mathbb{R}
%EndExpansion
^{e}$ $\ $denote the projection $P_{1}\left(  y_{1},y_{2}\right)  =y_{1}$, so
that $\mathbf{y}_{1}^{2}=\left(  P_{1}\otimes P_{1}\right)  \left(
\mathbf{y}^{2}\right)  .$\ We then observe that
\begin{equation}
\left(  P_{1}\otimes P_{1}\right)  \left(  W\left(  y_{t}\right)  \otimes
W\left(  y_{t}\right)  \right)  \mathbf{=}\left(  V\left(  y_{t}\right)
,0\right)  \otimes\left(  V\left(  y_{t}\right)  ,0\right)  , \label{ind}%
\end{equation}
and also the corresponding relation for $\mathbf{y}_{2}^{2}.$ The claim then
follows at once from (\ref{sec}) and (\ref{ind}).
\end{proof}

\begin{corollary}
\label{wd}Let $\nu=\lambda\delta_{\mathbf{x}_{1}}+\left(  1-\lambda\right)
\delta_{\mathbf{x}_{2}}\in\mathcal{M}\left(  P_{p,d}\right)  $. There exists a
unique fixed point $\mu\in\mathcal{M}_{1}\left(  P_{p,e}\right)  $ of the map
$\Psi_{\nu}$ which is given explicitly by
\begin{equation}
\mu=u_{0}^{\otimes2}\ast\left[  \lambda\delta_{\mathbf{y}_{1}}+\left(
1-\lambda\right)  \delta_{\mathbf{y}_{2}}\right]  , \label{fp1}%
\end{equation}
where $\mathbf{y}_{i}\in WG\Omega_{p}\left(
%TCIMACRO{\U{211d} }%
%BeginExpansion
\mathbb{R}
%EndExpansion
^{2d}\right)  ,$ $i=1,2$ are the projections of the solution to (\ref{sys})
driven by any extension $\mathbf{x}$ of $\mathbf{x}_{1}$ and $\mathbf{x}_{2}.$
\end{corollary}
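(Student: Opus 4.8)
The plan is to deduce Corollary \ref{wd} from Proposition \ref{2part} together with the construction of $\Psi_\nu$ described before the statement. The first step is to fix an arbitrary extension $\mathbf{x}\in WG\Omega_p(\mathbb{R}^{2d})$ of $\mathbf{x}_1,\mathbf{x}_2$ (which exists by the LV Extension Theorem), solve the coupled RDE (\ref{sys}) to obtain $\mathbf{y}\in WG\Omega_p(\mathbb{R}^{2e})$, and read off the projections $\mathbf{y}_1,\mathbf{y}_2\in WG\Omega_p(\mathbb{R}^e)$ as in (\ref{proj}); the candidate measure $\mu$ is then defined by (\ref{fp1}). I would first verify that this $\mu$ is genuinely well-defined and independent of choices: independence of the extension $\mathbf{x}$ is exactly Proposition \ref{2part}, and one should also note the (routine) dependence of $\mathbf{y}_i$ on the starting point $y_i(0)$, so that the convolution with $u_0^{\otimes 2}$ makes sense as in the earlier displayed formula for $\mu(A)$. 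One should also check $\mu\in\mathcal{M}_1(P_{p,e})$, i.e. that $\rho_p(\mathbf{0},\cdot)$ is integrable against $\mu$; this follows from the deterministic RDE estimates (universal bounds on the solution in terms of the driver) applied pathwise, provided $u_0\times\nu$ has the relevant finite first moment — here $\nu$ is finitely supported so this reduces to a moment condition on $u_0$, which I would flag as a standing assumption.

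The second step is to show $\Psi_\nu(\mu)=\mu$. By definition $\Psi_\nu(\mu)=[\Theta_{V^0,V}(\mu,\cdot,\cdot)]_*(u_0\times\nu)$, so I must identify $\Theta_{V^0,V}(\mu,y_i(0),\mathbf{x}_i)$ with $\mathbf{y}_i$ for $i=1,2$. The key point is that with $\mu$ given by (\ref{fp1}), the mean-field drift $V^0(y)(\mu_t)=\int\sigma(y,\pi_1\mathbf{y})\mu_t(d\mathbf{y})$ evaluated along the trajectory collapses to the finite sum $\lambda\sigma(y,\pi_1(\mathbf{y}_1)_t)+(1-\lambda)\sigma(y,\pi_1(\mathbf{y}_2)_t)$, because the $u_0^{\otimes 2}$-convolution only shifts starting points and $\pi_1$ of the first marginal is what enters $\sigma$. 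Substituting this into (\ref{IRDE}) for the driver $\mathbf{x}_i$ gives precisely the $i$-th component of the coupled system (\ref{sys}) — this is the content of the decomposition (\ref{vfs}) of the block vector fields $W^i$ and of $W^0$. Hence the first-level (and, by Proposition \ref{2part}, the full rough-path) solution of the single equation $\Theta_{V^0,V}(\mu,y_i(0),\mathbf{x}_i)$ agrees with $\mathbf{y}_i$, so pushing $u_0\times\nu$ forward reproduces (\ref{fp1}); that is, $\Psi_\nu(\mu)=\mu$.

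The third step is uniqueness. Suppose $\tilde\mu$ is any fixed point. First I would argue that $\tilde\mu$ must be supported on (translates of) two rough paths: since $\nu$ has two atoms $\mathbf{x}_1,\mathbf{x}_2$ with weights $\lambda,1-\lambda$, the measure $\tilde\mu=\Psi_\nu(\tilde\mu)$ is the law of $\Theta_{V^0,V}(\tilde\mu,Y_0,\mathbf{X})$ where $(Y_0,\mathbf{X})\sim u_0\times\nu$, so conditionally on $\mathbf{X}=\mathbf{x}_i$ the solution depends only on $Y_0$ and the one-dimensional time-parameter of the drift; thus $\tilde\mu$ has the convolution form $u_0^{\otimes 2}\ast[\lambda\delta_{\tilde{\mathbf{y}}_1}+(1-\lambda)\delta_{\tilde{\mathbf{y}}_2}]$ for some rough paths $\tilde{\mathbf{y}}_1,\tilde{\mathbf{y}}_2$. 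Plugging this back in, $(\tilde{\mathbf{y}}_1,\tilde{\mathbf{y}}_2)$ must solve the coupled system (\ref{system}) for $N=2$ with $\gamma^{\tilde\mu}$ determined by these same paths; but that system is, after lifting via any extension $\mathbf{x}$, exactly (\ref{sys}), whose solution $\mathbf{y}$ in $WG\Omega_p(\mathbb{R}^{2e})$ is unique (Lip-$\gamma$, $\gamma>p$), and whose projections are extension-independent by Proposition \ref{2part}. Hence $\tilde{\mathbf{y}}_i=\mathbf{y}_i$ and $\tilde\mu=\mu$.

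I expect the main obstacle to be the uniqueness argument, specifically justifying rigorously that an arbitrary fixed point is \emph{forced} to have the two-atom convolution structure and, more delicately, that the coupled system it then satisfies coincides with (\ref{sys}) driven by \emph{some} joint lift — one has to produce a consistent extension $\mathbf{x}$ from the a priori only individually-defined $\mathbf{x}_i$ and check that the "$\ast$" entries are irrelevant, which is where Proposition \ref{2part} does the real work. The self-referential nature of $\gamma^{\tilde\mu}$ (the drift depends on the unknown law, which depends on the solution) must be unwound carefully: the clean way is to observe that any fixed point gives a solution of the coupled RDE system, and the coupled system has a unique solution, so there is at most one fixed point; everything else — the moment bound, the collapse of the integral drift to a finite sum, the block structure of $W$ — is routine given the earlier estimates and the LV Extension Theorem.
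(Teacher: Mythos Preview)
Your proposal is correct and follows essentially the same route as the paper: well-definedness of (\ref{fp1}) via Proposition \ref{2part}, then uniqueness by observing that any fixed point must have the two-atom convolution form, that the two supporting rough paths individually solve RDEs driven by $\mathbf{x}_1,\mathbf{x}_2$, and that these can be assembled into a single solution of the coupled system (\ref{sys}) driven by any consistent lift $\mathbf{x}$, whence uniqueness of (\ref{sys}) and extension-independence finish the argument. You are more explicit than the paper on the existence side (verifying $\Psi_\nu(\mu)=\mu$ by collapsing the mean-field drift to the finite sum) and on the moment check $\mu\in\mathcal{M}_1(P_{p,e})$, but the key ideas and their ordering are the same.
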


\begin{proof}
The previous proposition ensures (\ref{fp1}) is well-defined. In other words,
for every fixed realisation $\left(  y_{1}\left(  0\right)  ,y_{2}\left(
0\right)  \right)  $ of $u_{0}^{\otimes2}$ the rough paths $\mathbf{y}_{1}$
and $\mathbf{y}_{2}$ will not depend on the choice of extension. We muct check
that this is the only fixed point. To do so, first note that the assumption on
$\nu$ implies that any fixed point must have the form
\[
\mu=u_{0}^{\otimes2}\ast\left[  \lambda\delta_{\mathbf{z}_{1}}+\left(
1-\lambda\right)  \delta_{\mathbf{z}_{2}}\right]  ,
\]
where $\mathbf{z}_{i}$, $i=1,2$ are elements of $G\Omega_{p}\left(
%TCIMACRO{\U{211d} }%
%BeginExpansion
\mathbb{R}
%EndExpansion
^{e}\right)  .$ Then, by the definition of the map $\Psi_{\nu},$
$\mathbf{z}_{i}$, $i=1,2$ must solve the RDEs
\[
d\mathbf{z}_{i}\left(  t\right)  =V^{0}\left(  y_{i}\left(  t\right)  \right)
d\gamma_{t}^{\mu}+V\left(  y_{i}\left(  t\right)  \right)  d\mathbf{x}%
_{i}\left(  t\right)  \text{ }.
\]
Let $\mathbf{x}$ be any path in $WG\Omega_{p}\left(
%TCIMACRO{\U{211d} }%
%BeginExpansion
\mathbb{R}
%EndExpansion
^{2d}\right)  $ whose projections are consistent with $\mathbf{x}_{1}$ and
$\mathbf{x}_{2}$. Then, since $\mathbf{z}_{1}$ and $\mathbf{z}_{2}$ may both
be written as solutions to RDEs driven by $\mathbf{x,}$ we may define in a
canonical way (see \cite{FH}) a path $\mathbf{z}$ in $WG\Omega_{p}\left(
%TCIMACRO{\U{211d} }%
%BeginExpansion
\mathbb{R}
%EndExpansion
^{2d}\right)  ,$ which has $\mathbf{z}_{1}$ and $\mathbf{z}_{2}$ as its
projections$.$ $\mathbf{z}$ is then the solution of the RDE (\ref{sys}) driven
along $\mathbf{x}$ $.$
\end{proof}

\subsection{N-particle systems}

We will later want to consider the propagation of choas phenomenon for rough
differential equations, and this requires us to present the treatment of the
previous subsection for a population of particles of arbitrary finite size
$N.$ We therefore suppose that the preference measure is now given by
\[
\nu=\sum_{i=1}^{N}\lambda_{i}\delta_{\mathbf{x}_{i}}.
\]
Analogously to the two-particle case (recall (\ref{vfs})) we define vector
fields $W^{0}$ and $W=\left(  W^{1},...,W^{Nd}\right)  $ on $%
%TCIMACRO{\U{211d} }%
%BeginExpansion
\mathbb{R}
%EndExpansion
^{Ne}\cong%
%TCIMACRO{\U{211d} }%
%BeginExpansion
\mathbb{R}
%EndExpansion
^{e}\times....\times%
%TCIMACRO{\U{211d} }%
%BeginExpansion
\mathbb{R}
%EndExpansion
^{e}$ (this time as differential operators for notational ease) by writing
$y=\left(  y^{1},....,y^{N}\right)  \in%
%TCIMACRO{\U{211d} }%
%BeginExpansion
\mathbb{R}
%EndExpansion
^{Ne}\cong%
%TCIMACRO{\U{211d} }%
%BeginExpansion
\mathbb{R}
%EndExpansion
^{e}\times....\times%
%TCIMACRO{\U{211d} }%
%BeginExpansion
\mathbb{R}
%EndExpansion
^{e}$ and setting
\begin{equation}
W^{0}\left(  y\right)  =\sum_{m=1}^{N}\sum_{k=1}^{e}\sum_{i=1}^{N}\lambda
_{i}\sigma_{k}\left(  y^{m},y^{i}\right)  \frac{\partial}{\partial y_{k}^{m}}
\label{vf2}%
\end{equation}
and, using the convention $kd=d\left(  \operatorname{mod}d\right)  $ for
$k\in\mathbb{Z}$,%
\begin{equation}
W^{j}\left(  y\right)  =\sum_{k=1}^{e}V_{k}^{j\left(  \operatorname{mod}%
d\right)  }\left(  y^{1+\lfloor\left(  j-1\right)  /d\rfloor}\right)
\frac{\partial}{\partial y_{k}^{1+\lfloor\left(  j-1\right)  /d\rfloor}%
},\text{ }j=1,...,Nd. \label{vf1}%
\end{equation}
As before, we will be interested in rough paths in $WG\Omega_{p}\left(
%TCIMACRO{\U{211d} }%
%BeginExpansion
\mathbb{R}
%EndExpansion
^{Nd}\right)  $ whose projections contain each of the rough paths
$\mathbf{x}_{1},\mathbf{x}_{2},...,\mathbf{x}_{N},$ which together form the
support of $\nu.$ The following notation indexes the components of the
extension in terms of the components of $\mathbf{x}_{1},\mathbf{x}%
_{2},...,\mathbf{x}_{N}$.

\begin{notation}
For each $k$ in $\mathbb{N}\ $and $m=0,....,N-1$ define $I_{k,m}$, a subset of
$\left\{  1,....,Nd\right\}  ^{k},$ by
\[
\left(  i_{1},...,i_{k}\right)  \in I_{k,m}\text{ iff }\left\{  i_{1}%
,...,i_{k}\right\}  \subseteq\left\{  md+1,...,\left(  m+1\right)  d\right\}
.
\]
We will let $I_{k}$ denote the subset $\cup_{m=0}^{N-1}I_{k,m}.$
\end{notation}

We now formalise the precise sense in which $\left\{  \mathbf{x}%
_{1},\mathbf{x}_{2},...,\mathbf{x}_{N}\right\}  $ is related to any extension.

\begin{definition}
If $\left\{  \mathbf{x}_{1},\mathbf{x}_{2},...,\mathbf{x}_{N}\right\}  $ is a
collection of rough paths in $G\Omega_{p}\left(
%TCIMACRO{\U{211d} }%
%BeginExpansion
\mathbb{R}
%EndExpansion
^{d}\right)  ,$ then we say that $\mathbf{x}$ in $WG\Omega_{p}\left(
%TCIMACRO{\U{211d} }%
%BeginExpansion
\mathbb{R}
%EndExpansion
^{Nd}\right)  $ is a lift which is consistent with $\mathbf{x}_{1}%
,\mathbf{x}_{2},...,\mathbf{x}_{N},$ if for every $k=1,...,\lfloor p\rfloor$
its projections satisfy
\[
\pi_{k}^{\left(  i_{1},...,i_{k}\right)  }\left(  \mathbf{x}\right)
=\mathbf{x}_{m}^{k;\left(  i_{1}\left(  \operatorname{mod}d\right)
,...,i_{k}\left(  \operatorname{mod}d\right)  \right)  },\text{ }%
\forall\left(  i_{1},...,i_{k}\right)  \in I_{k,m},\text{ }\forall m=1,...,N.
\]

\end{definition}

We now chose any lift $\mathbf{x}$ which is consistent with $\mathbf{x}%
_{1},\mathbf{x}_{2},...,\mathbf{x}_{N}$ . We want to show that if we solve the
RDE%
\[
d\mathbf{y}_{t}=W^{0}\left(  y_{t}\right)  dt+W\left(  y_{t}\right)
d\mathbf{x}_{t},\text{ }y\left(  0\right)  =\left(  y_{1}\left(  0\right)
,...,y_{N}\left(  0\right)  \right)  \in%
%TCIMACRO{\U{211d} }%
%BeginExpansion
\mathbb{R}
%EndExpansion
^{Ne},
\]
along $\mathbf{x,}$ then the output $\mathbf{y}$ will have the same
projections irrespective of the initial choice of lift. To do so, we have to
identify normal subgroup $K$ of $G^{n}\left(
%TCIMACRO{\U{211d} }%
%BeginExpansion
\mathbb{R}
%EndExpansion
^{Nd}\right)  $ so that $\left\{  \mathbf{x}_{1},\mathbf{x}_{2},...,\mathbf{x}%
_{N}\right\}  $ can be identified with a path in the quotient group
$G^{n}\left(
%TCIMACRO{\U{211d} }%
%BeginExpansion
\mathbb{R}
%EndExpansion
^{Nd}\right)  /K.$

\begin{lemma}
For $n\in%
%TCIMACRO{\U{2115} }%
%BeginExpansion
\mathbb{N}
%EndExpansion
$ let $\mathfrak{g}^{n}\left(
%TCIMACRO{\U{211d} }%
%BeginExpansion
\mathbb{R}
%EndExpansion
^{Nd}\right)  $ denote the Lie algebra of $G^{n}\left(
%TCIMACRO{\U{211d} }%
%BeginExpansion
\mathbb{R}
%EndExpansion
^{Nd}\right)  .$ Suppose $\mathfrak{k}^{n}\left(
%TCIMACRO{\U{211d} }%
%BeginExpansion
\mathbb{R}
%EndExpansion
^{Nd}\right)  $ is the subset of $\mathfrak{g}^{n}\left(
%TCIMACRO{\U{211d} }%
%BeginExpansion
\mathbb{R}
%EndExpansion
^{Nd}\right)  $ defined by
\[
\mathfrak{k}^{n}\left(
%TCIMACRO{\U{211d} }%
%BeginExpansion
\mathbb{R}
%EndExpansion
^{Nd}\right)  =\left\{  a\in\mathfrak{g}^{n}\left(
%TCIMACRO{\U{211d} }%
%BeginExpansion
\mathbb{R}
%EndExpansion
^{Nd}\right)  :\left\langle e_{I}^{\ast},a^{\left\vert I\right\vert
}\right\rangle =0,\forall I\in\cup_{k=1}^{n}I_{k}\right\}  ,
\]
where, if $I=\left(  i_{1},...,i_{k}\right)  ,$ we write $\left\vert
I\right\vert =k$ and $e_{I}^{\ast}:=e_{i_{1}}^{\ast}\otimes....\otimes
e_{i_{k}}^{\ast}$. Let%
\[
K^{n}\left(
%TCIMACRO{\U{211d} }%
%BeginExpansion
\mathbb{R}
%EndExpansion
^{Nd}\right)  :=\exp\left(  \mathfrak{k}^{n}\left(
%TCIMACRO{\U{211d} }%
%BeginExpansion
\mathbb{R}
%EndExpansion
^{Nd}\right)  \right)  .
\]
Then $K^{n}\left(
%TCIMACRO{\U{211d} }%
%BeginExpansion
\mathbb{R}
%EndExpansion
^{Nd}\right)  $ is a connected Lie subgroup of $G^{n}\left(
%TCIMACRO{\U{211d} }%
%BeginExpansion
\mathbb{R}
%EndExpansion
^{Nd}\right)  $, $\mathfrak{k}^{n}\left(
%TCIMACRO{\U{211d} }%
%BeginExpansion
\mathbb{R}
%EndExpansion
^{Nd}\right)  $ is an ideal in $\mathfrak{g}^{n}\left(
%TCIMACRO{\U{211d} }%
%BeginExpansion
\mathbb{R}
%EndExpansion
^{Nd}\right)  $ and hence $K^{n}\left(
%TCIMACRO{\U{211d} }%
%BeginExpansion
\mathbb{R}
%EndExpansion
^{Nd}\right)  $ is a normal subgroup of $G^{n}\left(
%TCIMACRO{\U{211d} }%
%BeginExpansion
\mathbb{R}
%EndExpansion
^{Nd}\right)  .$
\end{lemma}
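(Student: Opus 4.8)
The plan is to verify three things in turn: that $\mathfrak{k}^n(\mathbb{R}^{Nd})$ is a Lie subalgebra (in fact an ideal) of $\mathfrak{g}^n(\mathbb{R}^{Nd})$, that $K^n(\mathbb{R}^{Nd}) = \exp(\mathfrak{k}^n)$ is therefore a connected Lie subgroup, and finally that normality of the subgroup follows from the ideal property. The key algebraic observation driving everything is that the index sets $I_{k,m}$ single out those multi-indices all of whose entries lie in a single block $\{md+1,\dots,(m+1)d\}$, and $\mathfrak{k}^n$ consists of those Lie elements whose coordinates along \emph{all} such ``pure-block'' multi-indices vanish. So I would first reformulate $\mathfrak{k}^n$ as the set of $a \in \mathfrak{g}^n$ that project to zero in each of the $N$ ``diagonal'' free nilpotent algebras $\mathfrak{g}^n(\mathbb{R}^d)$ sitting inside $\mathfrak{g}^n(\mathbb{R}^{Nd})$ via the inclusions $\mathbb{R}^d \hookrightarrow \mathbb{R}^{Nd}$ onto the $m$-th block. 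Equivalently, $\mathfrak{k}^n = \bigcap_{m=1}^N \ker(p_m)$, where $p_m : \mathfrak{g}^n(\mathbb{R}^{Nd}) \to \mathfrak{g}^n(\mathbb{R}^d)$ is the Lie algebra homomorphism induced by the coordinate projection $P_m : \mathbb{R}^{Nd} \to \mathbb{R}^d$ onto block $m$.

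Once this reformulation is in place the proof is short. Each $p_m$ is a Lie algebra homomorphism (it is the restriction to the Lie algebra of the tensor-algebra homomorphism induced by the linear map $P_m$, which respects brackets), so $\ker(p_m)$ is an ideal of $\mathfrak{g}^n(\mathbb{R}^{Nd})$; hence $\mathfrak{k}^n = \bigcap_m \ker(p_m)$ is an ideal, being an intersection of ideals. In particular $\mathfrak{k}^n$ is a Lie subalgebra. Since $\mathfrak{g}^n(\mathbb{R}^{Nd})$ is nilpotent, $\exp$ is a global diffeomorphism from the Lie algebra onto the simply connected group $G^n(\mathbb{R}^{Nd})$, and it carries subalgebras to connected closed subgroups and ideals to normal subgroups; thus $K^n(\mathbb{R}^{Nd}) = \exp(\mathfrak{k}^n)$ is a connected (closed) Lie subgroup and, $\mathfrak{k}^n$ being an ideal, is normal. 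I would cite the standard facts about exponentials of nilpotent Lie algebras (e.g.\ from the references on free nilpotent groups already in use, or Corwin–Greenleaf) rather than reprove them.

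The one point that genuinely needs a small argument — and which I expect to be the main obstacle, or at least the only nontrivial step — is checking that each $p_m$ really is well-defined as a map $\mathfrak{g}^n(\mathbb{R}^{Nd}) \to \mathfrak{g}^n(\mathbb{R}^d)$ and that its kernel is exactly the ``pure-block-$m$ coordinates vanish'' condition. For this I would note that $P_m : \mathbb{R}^{Nd} \to \mathbb{R}^d$ extends canonically to an algebra homomorphism $T^{(n)}(\mathbb{R}^{Nd}) \to T^{(n)}(\mathbb{R}^d)$ of truncated tensor algebras, which restricts to a Lie algebra homomorphism on the free nilpotent Lie subalgebras, and that in coordinates the component of $p_m(a)$ along a multi-index $(j_1,\dots,j_k) \in \{1,\dots,d\}^k$ is precisely $\langle e^*_{(j_1 + md,\dots,j_k + md)}, a^{k}\rangle$ — i.e.\ reading off the coordinate of $a$ along the pure-block-$m$ multi-index obtained by shifting indices into block $m$. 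Therefore $a \in \ker(p_m)$ iff all such coordinates vanish, iff $\langle e^*_I, a^{|I|}\rangle = 0$ for all $I \in \cup_k I_{k,m}$; intersecting over $m$ gives exactly the defining condition for $\mathfrak{k}^n$. This bookkeeping with the $(\operatorname{mod} d)$ index conventions is the only place care is needed; everything downstream is a direct appeal to the structure theory of nilpotent Lie groups.
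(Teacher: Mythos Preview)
Your proof is correct but takes a genuinely different route from the paper. The paper proves the ideal property by a direct coordinate computation: for $a\in\mathfrak{k}^{n}$ and $b\in\mathfrak{g}^{n}$ it expands $\langle e_{I}^{\ast},(a\otimes b)^{k}\rangle=\sum_{l=1}^{k-1}\langle e_{I(l)}^{\ast},a^{l}\rangle\langle e_{I(k-l)}^{\ast},b^{k-l}\rangle$ and observes that since $I\in I_{k}$ is a pure-block multi-index, every prefix $I(l)$ is also pure-block, so each term vanishes by the defining condition on $a$; the same argument (applied to the suffixes) kills $\langle e_{I}^{\ast},(b\otimes a)^{k}\rangle$, and hence the bracket. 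Your approach instead recognises $\mathfrak{k}^{n}$ as the intersection $\bigcap_{m}\ker(p_{m})$ of kernels of the Lie algebra homomorphisms $p_{m}:\mathfrak{g}^{n}(\mathbb{R}^{Nd})\to\mathfrak{g}^{n}(\mathbb{R}^{d})$ induced by the block projections $P_{m}$, and invokes the general fact that kernels of homomorphisms are ideals. The paper's argument is self-contained and hands-on; yours is more structural and makes transparent \emph{why} the result holds --- namely that $\mathfrak{k}^{n}$ is exactly the data lost under the simultaneous block projections, a viewpoint that dovetails nicely with the quotient-group identification in Remark~\ref{lift1}. Both routes are short; the coordinate check you flag as the ``only nontrivial step'' (matching $\ker(p_{m})$ to the vanishing condition on $I_{k,m}$) is indeed the substance of your argument, and it is correct.
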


\begin{proof}
It is immediate that $K^{n}\left(
%TCIMACRO{\U{211d} }%
%BeginExpansion
\mathbb{R}
%EndExpansion
^{Nd}\right)  $ is a connected Lie subgroup. To prove that $\mathfrak{k}%
^{n}\left(
%TCIMACRO{\U{211d} }%
%BeginExpansion
\mathbb{R}
%EndExpansion
^{Nd}\right)  $ is an ideal $\mathfrak{g}^{n}\left(
%TCIMACRO{\U{211d} }%
%BeginExpansion
\mathbb{R}
%EndExpansion
^{Nd}\right)  $ we need to show that for any $a$ in $\mathfrak{k}^{n}\left(
%TCIMACRO{\U{211d} }%
%BeginExpansion
\mathbb{R}
%EndExpansion
^{Nd}\right)  $ and $b$ in $\mathfrak{g}^{n}\left(
%TCIMACRO{\U{211d} }%
%BeginExpansion
\mathbb{R}
%EndExpansion
^{Nd}\right)  $ we have
\[
\left\langle e_{I}^{\ast},\left[  a,b\right]  ^{\left\vert I\right\vert
}\right\rangle =0\text{ for all }I\in I_{k},k=1,...,n.
\]
But this follows by noticing that
\[
\left\langle e_{I}^{\ast},\left(  a\otimes b\right)  ^{k}\right\rangle
=\sum_{l=1}^{k-1}\left\langle e_{I}^{\ast},a^{l}\otimes b^{k-l}\right\rangle
=\sum_{l=1}^{k-1}\left\langle e_{I\left(  l\right)  }^{\ast},a^{l}%
\right\rangle \left\langle e_{I\left(  k-l\right)  }^{\ast},b^{k-l}%
\right\rangle =0.
\]
Where, for every $l=1,....,k-l,$ we have written $I=:\left(  I\left(
l\right)  ,I\left(  k-l\right)  \right)  $ and used the fact that $I\in I_{k}$
to deduce $I\left(  l\right)  \in I_{l}$ and $I\left(  k-l\right)  \in
I_{k-l}.$ It is easily seen from this that $\left\langle e_{I}^{\ast},\left[
a,b\right]  ^{\left\vert I\right\vert }\right\rangle =0.$ The assertion that
$K^{n}\left(
%TCIMACRO{\U{211d} }%
%BeginExpansion
\mathbb{R}
%EndExpansion
^{Nd}\right)  $ is normal then follows from the well-known correspondence
between ideals of Lie algebras and normal subgroups of the Lie group (see,
e.g., \cite{lee}).
\end{proof}

\begin{remark}
\label{lift1}In a straight forward way we may\ uniquely identify any given
collection of rough paths $\left\{  \mathbf{x}_{1},\mathbf{x}_{2}%
,...,\mathbf{x}_{N}\right\}  $ in $G\Omega_{p}\left(
%TCIMACRO{\U{211d} }%
%BeginExpansion
\mathbb{R}
%EndExpansion
^{d}\right)  $ with a path, which we denote by $\left(  \mathbf{x}%
_{1},\mathbf{x}_{2},...,\mathbf{x}_{N}\right)  $, in the quotient group:
\[
G^{\lfloor p\rfloor}\left(
%TCIMACRO{\U{211d} }%
%BeginExpansion
\mathbb{R}
%EndExpansion
^{Nd}\right)  /K^{\lfloor p\rfloor}\left(
%TCIMACRO{\U{211d} }%
%BeginExpansion
\mathbb{R}
%EndExpansion
^{Nd}\right)  .
\]
$\left(  \mathbf{x}_{1},\mathbf{x}_{2},...,\mathbf{x}_{N}\right)  $ will then
have finite $p-$variation with respect to the homogenous quotient norm (see
\cite{LV}). Any extension $\mathbf{x\in}WG\Omega_{p}\left(
%TCIMACRO{\U{211d} }%
%BeginExpansion
\mathbb{R}
%EndExpansion
^{Nd}\right)  $ which is consistent with $\mathbf{x}_{1},\mathbf{x}%
_{2},...,\mathbf{x}_{N}$ as described above, will then extend $\left(
\mathbf{x}_{1},\mathbf{x}_{2},...,\mathbf{x}_{N}\right)  $ in the obvious
sense that
\[
\pi_{G^{\lfloor p\rfloor}\left(
%TCIMACRO{\U{211d} }%
%BeginExpansion
\mathbb{R}
%EndExpansion
^{Nd}\right)  ,G^{\lfloor p\rfloor}\left(
%TCIMACRO{\U{211d} }%
%BeginExpansion
\mathbb{R}
%EndExpansion
^{Nd}\right)  /K^{\lfloor p\rfloor}\left(
%TCIMACRO{\U{211d} }%
%BeginExpansion
\mathbb{R}
%EndExpansion
^{Nd}\right)  }\left(  \mathbf{x}\right)  =\left(  \mathbf{x}_{1}%
,\mathbf{x}_{2},...,\mathbf{x}_{N}\right)  .
\]

\end{remark}

We now prove the generalisation of Proposition \ref{2part} to the $N$-particle system.

\begin{theorem}
\label{Npart}Let $p\geq1$, $y_{1}\left(  0\right)  ,...,y_{N}\left(  0\right)
\in%
%TCIMACRO{\U{211d} }%
%BeginExpansion
\mathbb{R}
%EndExpansion
^{e},$ and suppose that $\left\{  \mathbf{x}_{1},\mathbf{x}_{2},...,\mathbf{x}%
_{N}\right\}  $ is a collection of rough paths in $G\Omega_{p}\left(
%TCIMACRO{\U{211d} }%
%BeginExpansion
\mathbb{R}
%EndExpansion
^{d}\right)  .$ Assume that $W^{0}$, defined by (\ref{vf2}), $\ $and
$W=\left(  W^{1},...,W^{Nd}\right)  $, defined by (\ref{vf1}) are,
respectively, vector fields in $Lip^{\beta}\left(
%TCIMACRO{\U{211d} }%
%BeginExpansion
\mathbb{R}
%EndExpansion
^{Ne}\right)  $ and $Lip^{\gamma}\left(
%TCIMACRO{\U{211d} }%
%BeginExpansion
\mathbb{R}
%EndExpansion
^{Ne}\right)  $ for some $\beta>1$ and $\gamma>p$ $.$ For any $q$ in
$[p,\gamma)$ let $\mathbf{x}$ be an element of $WG\Omega_{q}\left(
%TCIMACRO{\U{211d} }%
%BeginExpansion
\mathbb{R}
%EndExpansion
^{2d}\right)  $ which extends $\mathbf{x}_{1},\mathbf{x}_{2},...,\mathbf{x}%
_{N}$ in the sense of (\ref{lift1}), and suppose $\mathbf{y}$ be the unique
solution in $WG\Omega_{q}\left(
%TCIMACRO{\U{211d} }%
%BeginExpansion
\mathbb{R}
%EndExpansion
^{Ne}\right)  $ to the RDE (\ref{sys}) driven by $\mathbf{x.}$ Then
$\mathbf{y}$ has the property that its projections $\mathbf{y}_{1}%
,...,\mathbf{y}_{N}$ to elements of $WG\Omega_{q}\left(
%TCIMACRO{\U{211d} }%
%BeginExpansion
\mathbb{R}
%EndExpansion
^{e}\right)  $ depend on $\mathbf{x}_{1},\mathbf{x}_{2},...,\mathbf{x}_{N},$
but not on the extension $\mathbf{x.}.$
\end{theorem}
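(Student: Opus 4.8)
The strategy is to generalise the mechanism of Proposition \ref{2part}: the path-level solution $\mathbf{y}$, together with all of its higher-order components, is built up from iterated integrals against the extension $\mathbf{x}$, and we must show that after projecting onto each factor $\mathbb{R}^{e}$ only those components of $\mathbf{x}$ lying in $\cup_{k}I_{k}$ (the ``diagonal blocks'') can appear. The cleanest way to organise this is to work with the quotient picture of Remark \ref{lift1}. By the Universal Limit Theorem the solution map $\mathbf{x}\mapsto\mathbf{y}$ is well-defined and continuous from $WG\Omega_{q}(\mathbb{R}^{Nd})$ to $WG\Omega_{q}(\mathbb{R}^{Ne})$; it therefore suffices to prove that this map descends to the quotients, i.e.\ that if $\mathbf{x}$ and $\mathbf{x}'$ are two extensions with $\pi(\mathbf{x})=\pi(\mathbf{x}')=(\mathbf{x}_{1},\ldots,\mathbf{x}_{N})$ in $G^{\lfloor p\rfloor}(\mathbb{R}^{Nd})/K^{\lfloor p\rfloor}(\mathbb{R}^{Nd})$, then the corresponding solutions $\mathbf{y},\mathbf{y}'$ have the same image under the projection $G^{\lfloor p\rfloor}(\mathbb{R}^{Ne})\to G^{\lfloor p\rfloor}(\mathbb{R}^{Ne})/L^{\lfloor p\rfloor}(\mathbb{R}^{Ne})$, where $L$ is the analogous normal subgroup for the target (killing the off-diagonal blocks). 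Each projection $\mathbf{y}_{m}$ is recovered from the $L$-quotient class of $\mathbf{y}$, so this is exactly the claim.

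First I would set up the target subgroup $L^{n}(\mathbb{R}^{Ne})\subseteq G^{n}(\mathbb{R}^{Ne})$ by repeating the Lemma verbatim with $d$ replaced by $e$; the same Lie-bracket computation shows $\mathfrak{l}^{n}$ is an ideal, so $L^{n}$ is normal, and $\mathbf{y}_{m}$ is precisely the image of $\mathbf{y}$ in the $m$-th factor $G^{n}(\mathbb{R}^{e})\cong G^{n}(\mathbb{R}^{Nd})/(\text{all but block }m)$. Next comes the key structural observation, which is the analogue of the vanishing of $[W^{p},W^{q}]$ in the two-particle case: from the definitions (\ref{vf2}) and (\ref{vf1}), the vector field $W^{j}$ for $j$ in block $m$ (i.e.\ $1+\lfloor(j-1)/d\rfloor=m$) acts only on the coordinates $y^{m}$, and $W^{0}$ acts coordinate-wise in a way compatible with the block structure of the state space. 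Consequently $[W^{p},W^{q}]=0$ whenever $p$ and $q$ lie in different blocks, and more generally any iterated Lie bracket $[W^{i_{1}},[W^{i_{2}},\ldots]]$ vanishes unless all indices $i_{1},\ldots,i_{k}$ lie in a single common block $\{md+1,\ldots,(m+1)d\}$ — that is, unless $(i_{1},\ldots,i_{k})\in I_{k}$.

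With this in hand I would argue by iterating on the truncation level $n=1,\ldots,\lfloor p\rfloor$ (or, equivalently, by expanding the Euler/log-ODE scheme that produces $\mathbf{y}_{s,t}$ as a limit of Riemann-type sums in the spirit of (\ref{approx})): at each level the increment of $\mathbf{y}$ is a universal polynomial expression in lower-order increments of $\mathbf{y}$, the time increment, and the increments $\mathbf{x}^{k}_{s,t}$, in which the only way a component $\mathbf{x}^{k;(i_{1},\ldots,i_{k})}$ with $(i_{1},\ldots,i_{k})\notin I_{k}$ could survive after applying $(P_{m}\otimes\cdots\otimes P_{m})$ is through an iterated bracket of the $W^{i_{j}}$'s that mixes blocks — and we have just shown all such brackets vanish. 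Hence the $L$-quotient class of each summand, and therefore of the limit $\mathbf{y}_{s,t}$, depends on $\mathbf{x}$ only through its $K$-quotient class, i.e.\ only through $(\mathbf{x}_{1},\ldots,\mathbf{x}_{N})$. Passing to the limit uses continuity of the solution map together with the fact that the quotient projection is Lipschitz for the homogeneous quotient norms (\cite{LV}).

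The main obstacle I anticipate is making the ``only mixed brackets can carry off-diagonal components'' step fully rigorous at arbitrary truncation level $p$: unlike the $2<p<3$ case, where one simply reads off the single second-level term and its anti-symmetric part, for general $\lfloor p\rfloor$ one must control how the nonlinear feedback of $\mathbf{y}$ into itself propagates off-diagonal data through many levels. The clean resolution is to phrase everything group-theoretically — show the flow of the RDE on $G^{\lfloor p\rfloor}(\mathbb{R}^{Ne})$ is equivariant with respect to the two quotient maps, so that it factors through a well-defined flow on the quotient $G^{\lfloor p\rfloor}(\mathbb{R}^{Nd})/K^{\lfloor p\rfloor}\to G^{\lfloor p\rfloor}(\mathbb{R}^{Ne})/L^{\lfloor p\rfloor}$ — rather than tracking individual iterated integrals; the block-diagonality of the $W^{j}$ and the vanishing of cross-block brackets are exactly what guarantee this equivariance.
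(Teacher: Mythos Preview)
Your proposal is correct and rests on the same underlying observation as the paper: the block structure of the $W^{j}$ forces all cross-block iterated Lie brackets to vanish, which is precisely the statement that $\mathfrak{k}^{\lfloor p\rfloor}(\mathbb{R}^{Nd})$ lies in the kernel of the Lie algebra homomorphism induced by $W$. The paper packages this as $\ker(F^{W}|_{\mathfrak{g}^{\lfloor p\rfloor}})\supseteq\mathfrak{k}^{\lfloor p\rfloor}$ for the algebra homomorphism $F^{W}(e_{i_{1}\ldots i_{n}})=W^{i_{1}}\circ\cdots\circ W^{i_{n}}$, and then invokes Theorem 20 of \cite{LV} directly to conclude that the path-level solution $\mathbf{y}^{1}$ is independent of the extension; the higher-level projections are dispatched by referring back to the explicit second-level computation (\ref{sec}) from the two-particle case.

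The genuine difference is organisational. You build a parallel normal subgroup $L^{n}(\mathbb{R}^{Ne})$ on the target side and argue that the solution map is equivariant for the pair of quotient projections, so that it descends to a map between quotients; this gives a uniform treatment of all levels $1,\ldots,\lfloor p\rfloor$ simultaneously. The paper instead separates the argument: level one is handled by a black-box citation to \cite{LV}, while the claim that the higher-level \emph{projections} $\mathbf{y}_{m}^{k}$ are extension-independent is treated more briefly (and, for $\lfloor p\rfloor>2$, somewhat elliptically). Your equivariance framing is slightly more work to set up but is arguably more transparent for general $p$, and it makes explicit what the paper leaves to the reader --- namely that the same block-diagonality that kills cross-brackets on the input side also kills the off-diagonal tensor components on the output side after projection by $P_{m}^{\otimes k}$. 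Either route is sound; the paper's is shorter because it outsources the heavy lifting to \cite{LV}.
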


\begin{proof}
From the LV Extension Theorem, there always exists an extension $\mathbf{x}$
of $\left(  \mathbf{x}_{1},\mathbf{x}_{2},...,\mathbf{x}_{N}\right)  $ in
$WG\Omega_{q}\left(
%TCIMACRO{\U{211d} }%
%BeginExpansion
\mathbb{R}
%EndExpansion
^{2d}\right)  $ for any $q>p$ $WG\Omega_{q}\left(
%TCIMACRO{\U{211d} }%
%BeginExpansion
\mathbb{R}
%EndExpansion
^{Nd}\right)  $ (and any $q\geq p$, if $p$ is not an integer). Let us define
an algebra homomorphism from the (truncated) tensor algebra $T^{\lfloor
p\rfloor}\left(
%TCIMACRO{\U{211d} }%
%BeginExpansion
\mathbb{R}
%EndExpansion
^{Nd}\right)  $ into the space of continuous differential operators, by taking
the linear extension of%
\[
F^{W}\left(  e_{i_{1}...i_{n}}\right)  =W^{i_{1}}\circ...\circ W^{i_{n}}.
\]
Restricting $F^{W}$ to $\mathfrak{g}^{\lfloor p\rfloor}\left(
%TCIMACRO{\U{211d} }%
%BeginExpansion
\mathbb{R}
%EndExpansion
^{Nd}\right)  $ gives a Lie algebra homomorphism into the space of vector
fields on $%
%TCIMACRO{\U{211d} }%
%BeginExpansion
\mathbb{R}
%EndExpansion
^{Nd}.$ An easy calculation confirms that
\begin{equation}
\ker\left(  F^{W}|_{\mathfrak{g}^{\lfloor p\rfloor}\left(
%TCIMACRO{\U{211d} }%
%BeginExpansion
\mathbb{R}
%EndExpansion
^{Nd}\right)  }\right)  \supseteq\mathfrak{k}^{\lfloor p\rfloor}\left(
%TCIMACRO{\U{211d} }%
%BeginExpansion
\mathbb{R}
%EndExpansion
^{Nd}\right)  ,
\end{equation}
whereupon Theorem 20 of \cite{LV} shows that $\mathbf{y}^{1}$ is independent
of the extension of $\left(  \mathbf{x}_{1},\mathbf{x}_{2},...,\mathbf{x}%
_{N}\right)  $ to $\mathbf{x.}$ In general, $\mathbf{y}^{2},...,\mathbf{y}%
^{\lfloor p\rfloor}$ will still depend on the choice of lift. Nevertheless,
the \textit{projections }of $\mathbf{y}$ to the $N$ paths $\mathbf{y}%
_{1},\mathbf{y}_{2},...,\mathbf{y}_{N}$ will be not do so. This is most easily
seen when $p\in\left(  2,3\right)  $ by the same calculation as in (\ref{sec}).
\end{proof}

\begin{remark}
\label{geom}Because each $\mathbf{y}_{i}=\Theta_{V^{0},V}\left(  \mu
,y_{i}\left(  0\right)  ,\mathbf{x}_{i}\right)  ,$ $\mathbf{y}_{i}$ solves an
RDE driven by $\mathbf{x}_{i}\in G\Omega_{p}\left(
%TCIMACRO{\U{211d} }%
%BeginExpansion
\mathbb{R}
%EndExpansion
^{d}\right)  $ and the Universal Limit Theorem guarantees that $\mathbf{y}%
_{i}$ is in fact an element of $G\Omega_{p}\left(
%TCIMACRO{\U{211d} }%
%BeginExpansion
\mathbb{R}
%EndExpansion
^{e}\right)  .$ This observation will be useful later on. It follows from
Theorem \ref{Npart}, together with a suitable elaboration of the arguments of
Corollary \ref{wd}, that
\[
\mu=u_{0}^{\otimes N}\ast\sum_{i=1}^{N}\lambda_{i}\delta_{\mathbf{y}_{i}}%
\]
is the unique fixed point of $\Psi_{\nu}.$
\end{remark}

\section{A fixed-point and continuity theorem\label{fps}}

We now want to consider the case where the preference measure $\nu$ is a
non-discrete measure on rough path space. The main problem we address is to
find a condition on $\nu$ to force the existence of a unique fixed point to
the map $\Psi_{\nu}.$ A key feature will be the use of estimates controlling:
\[
\rho_{p-var;\left[  0,T\right]  }\left(  \mathbf{y}^{1},\mathbf{y}^{2}\right)
,
\]
the $\rho_{p-var;\left[  0,T\right]  }$-distance between two RDE solutions
$\mathbf{y}^{1}$ and $\mathbf{y}^{2}$ driven by $\mathbf{x.}$ These estimates
need have two properties: they need to be Lipschitz in the defining data
(starting point, vector fields etc) \textit{and} the Lipschitz constant must
have integrable dependence on $\mathbf{x}$, when $\mathbf{x}$ is realised
according to a wide class of measures. Classical RDE estimates satisfying the
first of these criteria, the latter needs more work. For example in \cite{FV}
\ the authors have proved estimates of the form
\begin{equation}
\rho_{p-var;\left[  0,T\right]  }\left(  \mathbf{y}^{1},\mathbf{y}^{2}\right)
\leq\left(  \ast\right)  \exp\left(  \left\vert \left\vert \mathbf{x}%
\right\vert \right\vert _{p-var;\left[  0,T\right]  }^{p}\right)  ,
\label{fv est}%
\end{equation}
where the terms $\left(  \ast\right)  $ incorporate the data. The drawback of
this estimates is that the right hand side fails to be integrable, for example
when $\mathbf{x}$ is the lift of a wide class of common process including
Brownian motion and fractional Brownian motion with $H<1/2.$ Fortunately, it
it possible to replace $\left\vert \left\vert \mathbf{x}\right\vert
\right\vert _{p-var;\left[  0,T\right]  }^{p}$ in (\ref{fv est}) by a quantity
called the \textit{accumulated }$\alpha-$\textit{local }$p$\textit{-variation}
(see below). By then making use of the recent tail estimates in \cite{CLL}, we
are able to cover these interesting examples.

\subsection{Lipschitz-continuity for RDEs with drift}

We recall the definition of the following function from \cite{CLL}:

\begin{definition}
Let $\alpha>0$ and $I\subseteq%
%TCIMACRO{\U{211d} }%
%BeginExpansion
\mathbb{R}
%EndExpansion
$ be a compact interval. Suppose that $\omega:I\times I\rightarrow%
%TCIMACRO{\U{211d} }%
%BeginExpansion
\mathbb{R}
%EndExpansion
^{+}$ is a control. We define the accumulated $\alpha-$local $\omega
$-variation by%
\[
M_{\alpha,I}\left(  \omega\right)  =\sup_{\substack{D\left(  I\right)
=\left(  t_{i}\right)  \\\omega\left(  t_{i},t_{i+1}\right)  \leq\alpha}%
}\sum_{i:t_{i}\in D\left(  I\right)  }\omega\left(  t_{i},t_{i+1}\right)  .
\]

\end{definition}

\bigskip The following lemma is a Lipschitz estimate on the RDE solution (with
drift), when we vary the defining data of the differential equation.

\begin{lemma}
\label{lip est}Let $\gamma>p\geq1$ and $\beta>1.$ Suppose $\mathbf{x}$ is a
weakly geometric $p-$rough path in $WG\Omega_{p}\left(
%TCIMACRO{\U{211d} }%
%BeginExpansion
\mathbb{R}
%EndExpansion
^{d}\right)  $, and assume that $\gamma^{1}$ and $\gamma^{2}$ are two paths
which take values in some Banach space $E$, and belong to $C^{1-var}\left(
[0,T],E\right)  .$ Then $\omega:\Delta_{\left[  0,T\right]  }\rightarrow%
%TCIMACRO{\U{211d} }%
%BeginExpansion
\mathbb{R}
%EndExpansion
^{+}$ defined by
\[
\omega\left(  s,t\right)  :=\sum_{i=1}^{2}\left\vert \left\vert \gamma
^{i}\right\vert \right\vert _{1-var;\left[  s,t\right]  }+\sum_{i=1}%
^{2}\left\vert \left\vert \mathbf{x}^{i}\right\vert \right\vert
_{p-var;\left[  s,t\right]  }^{p}%
\]
is a control. Furthermore, if $V=\left(  V^{1},...,V^{d}\right)  $ is a
collection of vector fields in $Lip^{\gamma}\left(
%TCIMACRO{\U{211d} }%
%BeginExpansion
\mathbb{R}
%EndExpansion
^{e}\right)  ,$ and $V^{0}$ is in $Lip^{\beta}\left(
%TCIMACRO{\U{211d} }%
%BeginExpansion
\mathbb{R}
%EndExpansion
^{e},L\left(  E,%
%TCIMACRO{\U{211d} }%
%BeginExpansion
\mathbb{R}
%EndExpansion
^{e}\right)  \right)  ,$ then for $i=1,2$ the RDEs%
\begin{align*}
d\mathbf{y}_{t}^{i}  &  =V\left(  y_{t}^{i}\right)  d\mathbf{x}_{t}^{i}%
+V^{0}\left(  y_{t}^{i}\right)  d\gamma_{t}^{i},\\
\pi_{1}\mathbf{y}_{t}^{i}  &  =y_{0}^{i}%
\end{align*}
have unique solutions. And for every $\alpha>0$ and some $C=C\left(
v,\alpha\right)  >0,$ we also have the following Lipschitz-continuity of the
solutions:%
\[
\rho_{p,\omega}\left(  \mathbf{y}^{1},\mathbf{y}^{2}\right)  \leq C\left[
\left\vert y_{0}^{1}-y_{0}^{2}\right\vert +\rho_{1,\omega}\left(  \gamma
^{1},\gamma^{2}\right)  +\rho_{p,\omega}\left(  \mathbf{x}^{1},\mathbf{x}%
^{2}\right)  \right]  \exp\left(  CM_{\alpha,\left[  0,T\right]  }\left(
\omega\right)  \right)  .
\]

\end{lemma}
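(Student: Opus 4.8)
The plan is to reduce this estimate to the corresponding result in \cite{FV} for RDEs without drift, by the standard device of absorbing the drift term $V^{0}(y_{t}^{i})\,d\gamma_{t}^{i}$ into the rough path. Concretely, I would form the joint driving signal $\mathbf{z}^{i}:=(\mathbf{x}^{i},\gamma^{i})$, regarded as a weakly geometric $p$-rough path in $WG\Omega_{p}(\mathbb{R}^{d}\oplus E)$: since $\gamma^{i}$ has bounded variation (hence finite $1$-variation, a fortiori finite $p$-variation), the pair $(\mathbf{x}^{i},\gamma^{i})$ lifts canonically, with the cross-iterated integrals between $\mathbf{x}^{i}$ and $\gamma^{i}$ defined via Young integration. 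One then checks that the natural control for $\mathbf{z}^{i}$ is comparable to $\omega(s,t)$ as defined in the statement, in particular $\|\mathbf{z}^{i}\|_{p\text{-var};[s,t]}^{p}\lesssim \omega(s,t)$; this is exactly the point at which one verifies $\omega$ is a control (it is a finite sum of controls, hence superadditive, continuous and zero on the diagonal). The combined vector field $\widetilde V=(V^{1},\dots,V^{d},V^{0})$ then has Lipschitz regularity $\min(\gamma,\beta+1)>p$ because $V^{0}$ is $Lip^{\beta}$ as an $L(E,\mathbb{R}^{e})$-valued map and enters linearly in the $E$-directions, and the two RDEs become the single driftless system $d\mathbf{y}^{i}=\widetilde V(y^{i})\,d\mathbf{z}^{i}$.

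Next I would invoke the deterministic Lipschitz estimate for RDE solutions from \cite{FV}, but in the $\omega$-modulus (local) form rather than the global $p$-variation form. This is where the replacement of $\|\mathbf{x}\|_{p\text{-var}}^{p}$ by $M_{\alpha,[0,T]}(\omega)$ happens: one splits $[0,T]$ at the successive times where the control $\omega$ accumulates mass $\alpha$, applies the local estimate of \cite{FV} on each such sub-interval (where the relevant rough-path norm is bounded by $\alpha$, so the exponential constant is uniformly controlled), and then concatenates. The number of sub-intervals needed is controlled by $M_{\alpha,[0,T]}(\omega)/\alpha$ (up to a constant), which is precisely why the iterated estimates multiply up to $\exp\!\big(C M_{\alpha,[0,T]}(\omega)\big)$ rather than $\exp\!\big(C\,\omega(0,T)\big)$; the inductive patching of the $\rho_{p,\omega}$-distances across sub-intervals follows the template already used in \cite{CLL}. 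The dependence on the data $|y_{0}^{1}-y_{0}^{2}|$, $\rho_{1,\omega}(\gamma^{1},\gamma^{2})$ and $\rho_{p,\omega}(\mathbf{x}^{1},\mathbf{x}^{2})$ comes out of the \cite{FV} estimate once one observes that $\rho_{p,\omega}(\mathbf{z}^{1},\mathbf{z}^{2})$ is bounded by the sum of the $\mathbf{x}$-distance and the $\gamma$-distance (again, the cross-terms are handled by the Young/Lipschitz continuity of the iterated integral in the bounded-variation argument).

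The main obstacle I anticipate is bookkeeping at the interface between the bounded-variation drift and the rough driver: one has to be careful that the canonical lift $\mathbf{z}^{i}=(\mathbf{x}^{i},\gamma^{i})$ depends continuously on the data in the right norms, i.e. that $\rho_{p,\omega}(\mathbf{z}^{1},\mathbf{z}^{2})\le C[\rho_{p,\omega}(\mathbf{x}^{1},\mathbf{x}^{2})+\rho_{1,\omega}(\gamma^{1},\gamma^{2})]$ with a constant depending only on $\omega(0,T)$ (or better, only on $\alpha$ after localisation), and that the combined vector field's $Lip^{\gamma'}$-norm with $\gamma'=\min(\gamma,\beta+1)>p$ is finite — here the linearity in the $E$-component is what lets a merely $Lip^{\beta}$, $\beta>1$, drift coefficient produce a $Lip^{>1}$ (indeed $Lip^{>p}$ after taking $\gamma'$ appropriately, or one simply runs the \cite{FV} estimate with the split regularity it allows) combined field. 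A secondary technical point is ensuring the constant $C=C(v,\alpha)$ genuinely depends only on $\alpha$ and on the vector-field bounds $v=(|V|_{Lip^{\gamma}},|V^{0}|_{Lip^{\beta}})$ and not on $\omega(0,T)$ itself; this is forced by doing the localisation at scale $\alpha$ \emph{before} applying the \cite{FV} bound, so that on each piece the rough-path norm is $\le\alpha$ and the constant is frozen. Once these continuity-of-lift and uniform-constant points are nailed down, the estimate follows by the concatenation argument described above.
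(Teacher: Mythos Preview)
Your localisation-and-concatenation idea (split $[0,T]$ at the times where $\omega$ accumulates mass $\alpha$, apply a local estimate on each piece, multiply up to get $\exp(CM_{\alpha,[0,T]}(\omega))$) is exactly what the paper does, with reference to \cite{CLL} and Remark~10.64 of \cite{FV}. That part is fine.

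The reduction step is where your proposal diverges from the paper and runs into trouble. The paper does \emph{not} absorb the drift into the rough driver; it works directly with Theorem~12.10 of \cite{FV}, which is already formulated for RDEs \emph{with drift} and is designed precisely so that the drift vector field $V^{0}$ need only be $Lip^{\beta}$ with $\beta>1$, while the rough vector fields $V$ are $Lip^{\gamma}$ with $\gamma>p$. Your route --- form $\mathbf{z}^{i}=(\mathbf{x}^{i},\gamma^{i})$ as a $p$-rough path and apply the driftless estimate --- would require the combined vector field $\widetilde V=(V,V^{0})$ to be $Lip^{\gamma'}$ for some $\gamma'>p$. But the $y$-regularity of $\widetilde V$ is $\min(\gamma,\beta)$, not $\min(\gamma,\beta+1)$: the linearity of $e\mapsto V^{0}(y)e$ in the $E$-direction does nothing for the smoothness of $y\mapsto V^{0}(y)$, which is what the Davie/Euler expansion needs. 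Since the hypothesis only gives $\beta>1$, for $p\ge 2$ you cannot conclude $\gamma'>p$, and the standard driftless theorem does not apply. Your parenthetical escape (``run the \cite{FV} estimate with the split regularity it allows'') is precisely an appeal to the with-drift result, at which point the absorption manoeuvre was an unnecessary detour.

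There is a second obstruction to your route that the paper's approach sidesteps. You need $\mathbf{z}^{i}\in WG\Omega_{p}(\mathbb{R}^{d}\oplus E)$ with $E$ an arbitrary Banach space, and then continuity of the lift $(\mathbf{x},\gamma)\mapsto\mathbf{z}$ in the appropriate norms; this drags in infinite-dimensional rough path theory, which is not covered by the finite-dimensional statements in \cite{FV} you are citing. The paper explicitly avoids this: because $\gamma^{i}$ has bounded variation, the drift contribution is handled by classical ODE estimates, so the Banach-valued direction never needs to be lifted. In short, keeping the drift separate (as in Theorem~12.10 of \cite{FV}) is what buys both the weaker regularity $\beta>1$ on $V^{0}$ and the painless passage to infinite-dimensional $E$; packing it into the rough path loses both advantages.
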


\begin{proof}
The proof is obtained by following the arguments of Theorem 12.10 of \cite{FV}
on RDEs with drift; two enhancements are necessary. The first is allow the
drift term to take values in an arbitrary (infinite dimensional) Banach space.
This is elementary, because in the current lemma $\gamma^{1}$ and $\gamma^{2}$
have bounded variation, and hence classical ode estimates can be used
everywhere. The second, more subtle, enhancement is to end up with the
accumulated $\alpha-$local $\omega$-variation featuring in the exponential on
the right hand side (as opposed to the usual $\omega\left(  0,T\right)
$).\ For this we refer to \cite{CLL} and Remark 10.64 of \cite{FV}.
\end{proof}

By exploiting the relationship between $\rho_{p,\omega}$ and $\rho_{p-var} $
we can obtain a Lipschitz estimate in $\rho_{p-var}$-distance:

\begin{corollary}
\label{used bound}With the notation of, and under the same conditions as,
Lemma \ref{lip est}, we have
\begin{align*}
&  \rho_{p-var;\left[  0,T\right]  }\left(  \mathbf{y}^{1},\mathbf{y}%
^{2}\right) \\
&  \leq C\omega\left(  0,T\right)  ^{N}\left[  \left\vert y_{0}^{1}-y_{0}%
^{2}\right\vert +\left\vert \left\vert \gamma^{1}-\gamma^{2}\right\vert
\right\vert _{1-var;\left[  0,T\right]  }+\left\vert \left\vert \mathbf{x}%
^{1}-\mathbf{x}^{2}\right\vert \right\vert _{p-var;\left[  s,t\right]
}\right]  \exp\left(  CM_{\alpha,\left[  0,T\right]  }\left(  \omega\right)
\right)  ,
\end{align*}
for some $N>0$.
\end{corollary}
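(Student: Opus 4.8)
The plan is to deduce this corollary from Lemma \ref{lip est} purely by translating between the two inhomogeneous rough path metrics $\rho_{p,\omega}$ and $\rho_{p\text{-}var}$. First I would recall the general principle (from \cite{FV}, e.g. the interplay between Proposition 8.14-type statements and the definition of $\rho_{p\text{-}var}$) that, for paths controlled by a fixed control $\omega$ on $[0,T]$, one has a two-sided comparison: the $p$-variation metric is bounded by the $\omega$-modulus metric times a power of $\omega(0,T)$. Concretely, if $\rho_{p,\omega}(\mathbf{z}^1,\mathbf{z}^2)\le K$ then $|\pi_i(\mathbf{z}^1_{s,t}-\mathbf{z}^2_{s,t})|\le K\,\omega(s,t)^{i/p}$ for each $i=1,\dots,\lfloor p\rfloor$, and summing over a partition and taking the $i/p$-th root gives a bound of the form $K\,\omega(0,T)^{i/p}$ on each level's contribution to $\rho_{p\text{-}var}$; taking the max over $i$ yields $\rho_{p\text{-}var;[0,T]}(\mathbf{z}^1,\mathbf{z}^2)\le K\,\omega(0,T)^{N}$ for a suitable exponent $N$ (one can take $N=\lfloor p\rfloor/p$, or simply $N=1$ after crudely bounding, since the corollary only asserts existence of some $N$). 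Applying this with $\mathbf{z}^i=\mathbf{y}^i$ and $K$ the right-hand side of Lemma \ref{lip est} immediately produces the exponential factor $\exp(CM_{\alpha,[0,T]}(\omega))$ on the outside.

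The remaining work is to control the three input terms. The quantity $|y_0^1-y_0^2|$ passes through untouched. For the $\gamma$-term I would use that, since both $\gamma^i\in C^{1\text{-}var}$, the $\omega$-modulus $1$-variation metric $\rho_{1,\omega}(\gamma^1,\gamma^2)=|\gamma^1_0-\gamma^2_0|+\sup_{s<t}\,|\gamma^1_{s,t}-\gamma^2_{s,t}|/\omega(s,t)$ is bounded by $\|\gamma^1-\gamma^2\|_{1\text{-}var;[0,T]}$ up to the behaviour of $\omega$ near the diagonal — but because $\omega$ is a control we have $\omega(s,t)\ge$ (in the relevant regime) a comparable $1$-variation quantity along $[s,t]$, so in fact $\rho_{1,\omega}(\gamma^1,\gamma^2)\le C\|\gamma^1-\gamma^2\|_{1\text{-}var;[0,T]}$, possibly after absorbing a power of $\omega(0,T)$. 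Similarly, $\rho_{p,\omega}(\mathbf{x}^1,\mathbf{x}^2)$ is comparable, up to a factor $\omega(0,T)^{\text{(power)}}$, to $\|\mathbf{x}^1-\mathbf{x}^2\|_{p\text{-}var;[0,T]}$, again using that $\omega$ dominates the $p$-variation of each $\mathbf{x}^i$ by its very definition. Collecting these comparisons, folding all the powers of $\omega(0,T)$ into a single exponent $N$ and all multiplicative constants into $C$, gives exactly the claimed inequality.

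The main obstacle — though it is more bookkeeping than genuine difficulty — is handling the near-diagonal behaviour when converting the $\omega$-modulus metrics $\rho_{1,\omega}$ and $\rho_{p,\omega}$ back to honest $1$-variation and $p$-variation norms: one must be careful that dividing by $\omega(s,t)^{1/p}$ and then re-summing over a partition does not lose a factor that blows up as $|t-s|\to 0$. This is exactly where super-additivity of the control $\omega$ and its continuity (vanishing on the diagonal) are used, and where the power $\omega(0,T)^N$ is generated; the cleanest route is to invoke the standard equivalence lemmas for rough path metrics under a fixed control from \cite{FV} rather than reproving them. I would also note that, strictly, the factor $\omega(0,T)^N$ and the factor $\exp(CM_{\alpha,[0,T]}(\omega))$ can be merged (since $M_{\alpha,[0,T]}(\omega)\ge c\,\omega(0,T)$ is false in general, but $\omega(0,T)^N\le C_\alpha\exp(CM_{\alpha,[0,T]}(\omega))$ does hold because $M_{\alpha,[0,T]}(\omega)$ controls $\omega(0,T)$ up to the chaining count), but keeping them separate as stated is harmless and matches the form needed in the sequel.
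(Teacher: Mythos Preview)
Your high-level plan matches the paper's single-sentence indication (``exploit the relationship between $\rho_{p,\omega}$ and $\rho_{p\text{-}var}$''), and your conversion for the \emph{output} term is correct: from $\rho_{p,\omega}(\mathbf{y}^1,\mathbf{y}^2)\le K$ one indeed gets $|\pi_i(\mathbf{y}^1_{s,t}-\mathbf{y}^2_{s,t})|\le K\,\omega(s,t)^{i/p}$, and summing over partitions yields $\rho_{p\text{-}var;[0,T]}(\mathbf{y}^1,\mathbf{y}^2)\lesssim K\,\omega(0,T)^{i/p}$.

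The gap is in converting the \emph{input} terms. You assert that $\rho_{p,\omega}(\mathbf{x}^1,\mathbf{x}^2)$ and $\rho_{1,\omega}(\gamma^1,\gamma^2)$ are comparable, up to powers of $\omega(0,T)$, to $\rho_{p\text{-}var}(\mathbf{x}^1,\mathbf{x}^2)$ and $\|\gamma^1-\gamma^2\|_{1\text{-}var}$; but the inequality you need runs the wrong way. The definition of $\omega$ guarantees $\omega(s,t)\ge\|\mathbf{x}^j\|_{p\text{-}var;[s,t]}^p$ for each $j$ \emph{separately}, which gives the a-priori bound $\rho_{p,\omega}(\mathbf{x}^1,\mathbf{x}^2)\le C$ but \emph{not} a bound by $\rho_{p\text{-}var}(\mathbf{x}^1,\mathbf{x}^2)$. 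Concretely: take $\gamma^i\equiv 0$ and $\mathbf{x}^1,\mathbf{x}^2$ each of $p$-variation $\approx a$ with opposite level-one increments concentrated on a single short sub-interval. Then $\rho_{p,\omega}(\mathbf{x}^1,\mathbf{x}^2)\approx 2^{1-1/p}$ is of order one while $\rho_{p\text{-}var}(\mathbf{x}^1,\mathbf{x}^2)\approx 2a\to 0$, and no \emph{positive} power of $\omega(0,T)\approx 2a^p$ rescues the comparison. (The same example shows the Corollary should really carry a factor like $(1\vee\omega(0,T))^N$ rather than $\omega(0,T)^N$; in the paper's applications this is immaterial because the polynomial prefactor is immediately absorbed into the exponential via Lemma~\ref{p-var bound}.)

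The fix is not to convert post hoc from Lemma~\ref{lip est} but to revisit its iterative proof: the local estimate on each sub-interval $[t_i,t_{i+1}]$ with $\omega(t_i,t_{i+1})\le\alpha$ can be stated directly with $\rho_{p\text{-}var;[t_i,t_{i+1}]}(\mathbf{x}^1,\mathbf{x}^2)$ and $\|\gamma^1-\gamma^2\|_{1\text{-}var;[t_i,t_{i+1}]}$ on the right, and patching these over the partition yields the global $\rho_{p\text{-}var}$ bound without ever needing the bad direction of comparison. That is what the paper's one-liner is pointing to, and what the continuity theorems in \cite{FV} actually deliver.
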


\subsection{Measure-valued paths}

For the current application, the main interest in these Lipschitz estimates
will occur when the space of probability measures $\mathcal{M}\left(
S_{p,e}\right)  $ is embedded in a Banach space $E$. In the typically case
$\gamma$ will then be constructed from $\mu\in\mathcal{M}\left(
P_{p,e}\right)  $ by setting $\gamma_{t}:=\int_{0}^{t}\mu_{s}ds.$ For the
moment, we develop this more abstractly by letting $Lip^{1}\left(  S\right)
^{\ast}$denote the dual of Lip-1 functions (that is, the bounded Lipschitz
functions) on a metric space $\left(  S,d\right)  .$ There is a canonical
injection $\mu\mapsto T_{\mu}$ from $\mathcal{M}\left(  S\right)  $ into
$Lip^{1}\left(  S\right)  ^{\ast}$ defined by the integration of functions in
$BL\left(  S\right)  $ against $\mu:$%
\begin{equation}
T_{\mu}\left(  \phi\right)  =\left\langle T_{\mu},\phi\right\rangle :=\int%
_{S}\phi\left(  s\right)  \mu\left(  ds\right)  . \label{embedd}%
\end{equation}
In this setting, two metric spaces will be of special interest as already
mentioned in Section \ref{rp}. The first is the step-$N$ free nilpotent group
with $e$ generators, $G^{N}\left(
%TCIMACRO{\U{211d} }%
%BeginExpansion
\mathbb{R}
%EndExpansion
^{e}\right)  $, with the (inhomogeneous) metric it inherits from the tensor
algebra:%
\[
d_{N}\left(  \mathbf{g,h}\right)  :=\max_{i=1,...,N}\left\vert \pi_{i}\left(
\mathbf{g-h}\right)  \right\vert .
\]
The second is the space of geometric $p-$rough paths $G\Omega_{p}\left(
%TCIMACRO{\U{211d} }%
%BeginExpansion
\mathbb{R}
%EndExpansion
^{d}\right)  $ equipped with $\rho_{p-var;\left[  0,T\right]  }.$

The next lemma examines the regularity of the paths which result from
(\ref{marginal}) the pushforward of $\mu$ under the evaluation maps, i.e.
\begin{equation}
\mu_{t}\equiv\left(  \psi_{t}\right)  _{\ast}\mu\in\mathcal{M}\left(
S_{p,e}\right)  . \label{marginal}%
\end{equation}

\begin{lemma}
Suppose $p\geq1$ and let $\mu$ be an element of $\mathcal{M}_{1}\left(
P_{p,e}\right)  .$ Let $\left(  \mu_{t}\right)  _{t\in\left[  0,T\right]  }$
be the path in $\mathcal{M}\left(  S_{p,e}\right)  $ defined by
(\ref{marginal}), and $\left(  T_{\mu_{t}}\right)  _{t\in\left[  0,T\right]
}$ the path in $Lip^{1}\left(  S_{p,e}\right)  ^{\ast}$ obtained by the
injection of $\mathcal{M}\left(  S_{p,e}\right)  $ into $Lip^{1}\left(
S_{p,e}\right)  ^{\ast}.$ Then for every $0\leq s\leq t\leq T$ we have%
\[
\left\vert \left\vert T_{\mu_{t}}-T_{\mu_{s}}\right\vert \right\vert
_{Lip^{1}\left(  S_{p,e}\right)  ^{\ast}}\leq\int_{P_{p,e}}d_{\lfloor
p\rfloor}\left(  \mathbf{y}_{s},\mathbf{y}_{t}\right)  \mu\left(
d\mathbf{y}\right)  \leq\int_{P_{p,e}}\rho_{p-var;\left[  s,t\right]  }\left(
1,\mathbf{y}\right)  \mu\left(  d\mathbf{y}\right)  ,
\]
where $\mathbf{y}_{u}=\psi_{u}\left(  \mathbf{y}\right)  .$ In particular,
$t\mapsto T_{\mu_{t}}$ is a continuous path in $Lip^{1}\left(  S_{p,e}\right)
^{\ast}.$
\end{lemma}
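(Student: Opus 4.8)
The plan is to establish the two pointwise inequalities and then deduce continuity. The key observation is that the map $\mu \mapsto T_\mu$ from $\mathcal{M}(S_{p,e})$ into $Lip^1(S_{p,e})^\ast$ is $1$-Lipschitz with respect to the bounded-Lipschitz (Fortet--Mourier) metric: for any $\phi$ with $|\phi|_{Lip\text{-}1} \le 1$ and any two measures $\mu_s, \mu_t$ on $S_{p,e}$, the difference $\langle T_{\mu_t} - T_{\mu_s}, \phi\rangle$ can be written, after transporting both marginals back to $P_{p,e}$ via the evaluation maps $\psi_s, \psi_t$, as $\int_{P_{p,e}}\bigl(\phi(\mathbf{y}_t) - \phi(\mathbf{y}_s)\bigr)\,\mu(d\mathbf{y})$. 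Here the point is that $\mu_s = (\psi_s)_\ast\mu$ and $\mu_t = (\psi_t)_\ast\mu$ arise from the \emph{same} measure $\mu$ on path space, so there is a natural coupling of $\mu_s$ and $\mu_t$ — the push-forward of $\mu$ under $\mathbf{y}\mapsto(\mathbf{y}_s,\mathbf{y}_t)$ — and we may integrate over $\mathbf{y}$ rather than over a product space.

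First I would fix $\phi\in Lip^1(S_{p,e})$ with $|\phi|_{Lip\text{-}1}\le 1$ and write
\[
\langle T_{\mu_t}-T_{\mu_s},\phi\rangle = \int_{S_{p,e}}\phi\, d\mu_t - \int_{S_{p,e}}\phi\, d\mu_s = \int_{P_{p,e}}\bigl(\phi(\mathbf{y}_t)-\phi(\mathbf{y}_s)\bigr)\,\mu(d\mathbf{y}),
\]
using the change-of-variables formula for push-forwards under $\psi_t$ and $\psi_s$. Since $|\phi(\mathbf{y}_t)-\phi(\mathbf{y}_s)|\le d_{\lfloor p\rfloor}(\mathbf{y}_s,\mathbf{y}_t)$, taking absolute values, pulling the modulus inside the integral, and then taking the supremum over admissible $\phi$ yields the first inequality
\[
\|T_{\mu_t}-T_{\mu_s}\|_{Lip^1(S_{p,e})^\ast}\le \int_{P_{p,e}}d_{\lfloor p\rfloor}(\mathbf{y}_s,\mathbf{y}_t)\,\mu(d\mathbf{y}).
\]
For the second inequality I would simply invoke the elementary pointwise bound $d_{\lfloor p\rfloor}(\mathbf{y}_s,\mathbf{y}_t)=d_{\lfloor p\rfloor}(1,\mathbf{y}_{s,t})\le \rho_{p\text{-var};[s,t]}(1,\mathbf{y})$, valid for any $\mathbf{y}\in G\Omega_p(\mathbb{R}^e)$, which holds because $d_{\lfloor p\rfloor}$ measures a single increment in the tensor norm while $\rho_{p\text{-var};[s,t]}$ takes a supremum over partitions of $[s,t]$ that includes the trivial partition $\{s,t\}$. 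Integrating this against $\mu$ gives the claimed chain of inequalities.

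For the final assertion — continuity of $t\mapsto T_{\mu_t}$ — I would argue that the right-hand side $\int_{P_{p,e}}\rho_{p\text{-var};[s,t]}(1,\mathbf{y})\,\mu(d\mathbf{y})$ tends to $0$ as $t\to s$. Here the hypothesis $\mu\in\mathcal{M}_1(P_{p,e})$ is essential: it provides the integrable dominating function $\mathbf{y}\mapsto\rho_{p\text{-var};[0,T]}(1,\mathbf{y})$ (finite $\mu$-a.s. and, by the first-moment assumption, $\mu$-integrable), while for each fixed $\mathbf{y}\in G\Omega_p(\mathbb{R}^e)$ the map $t\mapsto\rho_{p\text{-var};[s,t]}(1,\mathbf{y})$ is continuous and vanishes as $t\to s$ (continuity of $p$-variation of a fixed rough path on shrinking intervals). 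Dominated convergence then gives $\int_{P_{p,e}}\rho_{p\text{-var};[s,t]}(1,\mathbf{y})\,\mu(d\mathbf{y})\to 0$, hence $t\mapsto T_{\mu_t}$ is continuous. The only mildly delicate point is the domination step — one needs that $\rho_{p\text{-var};[s,t]}(1,\mathbf{y})\le \rho_{p\text{-var};[0,T]}(1,\mathbf{y})$ uniformly in $s\le t$, which is immediate from superadditivity of the control, and that the latter is $\mu$-integrable, which is exactly the $\mathcal{M}_1$ hypothesis. I do not expect any serious obstacle here; the proof is essentially a combination of a change of variables, a trivial-partition estimate, and dominated convergence.
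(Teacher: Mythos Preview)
Your proposal is correct and follows essentially the same route as the paper: both arguments write $\langle T_{\mu_t}-T_{\mu_s},\phi\rangle$ as $\int_{P_{p,e}}(\phi\circ\psi_t-\phi\circ\psi_s)\,d\mu$ via the push-forward formula, apply the Lipschitz bound on $\phi$ to obtain the $d_{\lfloor p\rfloor}$ integral, bound this by the $\rho_{p\text{-var};[s,t]}$ integral, and finish with dominated convergence using the $\mathcal{M}_1$ hypothesis. Your write-up is in fact slightly more detailed than the paper's (you name the dominating function and justify the domination explicitly); the only minor imprecision is the equality $d_{\lfloor p\rfloor}(\mathbf{y}_s,\mathbf{y}_t)=d_{\lfloor p\rfloor}(1,\mathbf{y}_{s,t})$, since $d_{\lfloor p\rfloor}$ is the linear (not group-invariant) metric on $T^{\lfloor p\rfloor}(\mathbb{R}^e)$, but the inequality you need still holds and the paper does not dwell on this step either.
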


\begin{proof}
Take $\phi\in Lip^{1}\left(  S_{p}\right)  $ with $\left\vert \left\vert
\phi\right\vert \right\vert _{Lip^{1}\left(  S_{p,e}\right)  }=1.$ The result
then follows from the proceeding calculation:%
\begin{align*}
\left\vert \left\langle T_{\mu_{t}}-T_{\mu_{s}},\phi\right\rangle \right\vert
&  =\left\vert \int_{S_{p,e}}\phi\left(  \mathbf{g}\right)  \left[  \left(
\psi_{t}\right)  _{\ast}\mu\right]  \left(  d\mathbf{g}\right)  -\int%
_{S_{p,e}}\phi\left(  \mathbf{g}\right)  \left[  \left(  \psi_{s}\right)
_{\ast}\mu\right]  \left(  d\mathbf{g}\right)  \right\vert \\
&  =\left\vert \int_{P_{p,e}}\left(  \phi\circ\psi_{t}-\phi\circ\psi
_{s}\right)  \left(  \mathbf{y}\right)  \mu\left(  d\mathbf{y}\right)
\right\vert \\
&  \leq\int_{P_{p,e}}d_{\lfloor p\rfloor}\left(  \psi_{s}\left(
\mathbf{y}\right)  ,\psi_{t}\left(  \mathbf{y}\right)  \right)  \mu\left(
d\mathbf{y}\right) \\
&  \leq\int_{P_{p,e}}\rho_{p-var;\left[  s,t\right]  }\left(  1,\mathbf{y}%
\right)  \mu\left(  d\mathbf{y}\right)
\end{align*}
The right hand side is finite since $\mu$ is in $\mathcal{M}_{1}\left(
P_{p,e}\right)  ,$ and (by the dominated convergence theorem) it tends to zero
as $\left\vert t-s\right\vert $ tends to zero.
\end{proof}

It follows from this lemma that
\begin{equation}
\gamma_{s,t}:=\int_{s}^{t}T_{\mu_{r}}dr:=\int_{0}^{T}1_{\left[  s,t\right]
}\left(  r\right)  T_{\mu_{r}}dr\in Lip^{1}\left(  S_{p,e}\right)  ^{\ast}
\label{occ measure}%
\end{equation}
is well-defined for every $\left(  s,t\right)  \subseteq\left[  0,T\right]  ,$
where the integral is understood in the sense of Bochner integration. For any
$\phi\in Lip^{1}\left(  S_{p,e}\right)  ,$ standard properties of the integral
yield that%
\begin{equation}
\left\langle \gamma_{s,t},\phi\right\rangle =\int_{s}^{t}\left\langle
T_{\mu_{r}},\phi\right\rangle dr=\int_{s}^{t}\int_{S_{p,e}}\phi\left(
\mathbf{g}\right)  \mu_{r}\left(  d\mathbf{g}\right)  dr. \label{boch}%
\end{equation}
The space $P_{p,e}$ carries with it an implicit time interval $\left[
0,T\right]  $ which we suppress in the notation$.$ Occasionally, we might want
to make this explicit by writing $P_{p,e,T}$. For example, if we start with a
probability measure $\mu$ in $\mathcal{M}\left(  P_{p,e,T}\right)  $ we will
need to consider its restriction, $\mu|_{\left[  0,t\right]  }$ to a
probability measure in $\mathcal{M}\left(  P_{p,e,t}\right)  $. We then let
$W_{t}$ denote the Wasserstein metric on $\mathcal{M}\left(  P_{p,e,t}\right)
$, and write $W_{t}\left(  \mu^{1},\mu^{2}\right)  $ to mean $W_{t}\left(
\mu^{1}|_{\left[  0,t\right]  },\mu^{2}|_{\left[  0,t\right]  }\right)  $.

\begin{corollary}
\label{fv bound}Suppose $p\geq1$ and let $T>0$. Assume $\mu^{1}$ and $\mu^{2}$
are two elements of $\mathcal{M}_{1}\left(  P_{p,e,T}\right)  ,$ and for
$i=1,2$ let $\gamma^{i}:\Delta_{\left[  0,T\right]  }\rightarrow
Lip^{1}\left(  S_{p,e}\right)  ^{\ast}$ be the function defined by%
\[
\gamma_{s,t}^{i}=\int_{s}^{t}T_{\mu_{r}^{i}}dr.
\]
Then for every $0\leq s<t\leq T$ we have that
\begin{equation}
\left\vert \gamma_{s,t}^{1}-\gamma_{s,t}^{2}\right\vert _{Lip^{1}\left(
S_{p,e}\right)  ^{\ast}}\leq C\left(  t-s\right)  W_{t}\left(  \mu^{1},\mu
^{2}\right)  . \label{lip}%
\end{equation}
In particular,
\begin{equation}
\left\vert \left\vert \gamma^{1}-\gamma^{2}\right\vert \right\vert
_{1-var;\left[  0,T\right]  }\leq C\int_{0}^{T}W_{t}\left(  \mu^{1},\mu
^{2}\right)  dt. \label{var}%
\end{equation}

\end{corollary}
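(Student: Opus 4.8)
The plan is to deduce both displayed estimates from the single pointwise bound
\[
\left\vert \gamma _{s,t}^{1}-\gamma _{s,t}^{2}\right\vert _{Lip^{1}\left( S_{p,e}\right) ^{\ast }}\leq C\int_{s}^{t}W_{r}\left( \mu ^{1},\mu ^{2}\right) \,dr,\qquad 0\leq s\leq t\leq T .
\]
Granting this, (\ref{var}) is immediate: since each $\gamma ^{i}$ is additive in its increments, $\left\vert \left\vert \gamma ^{1}-\gamma ^{2}\right\vert \right\vert _{1-var;\left[ 0,T\right] }=\sup_{D}\sum_{j}\left\vert \gamma _{t_{j},t_{j+1}}^{1}-\gamma _{t_{j},t_{j+1}}^{2}\right\vert $, and for \emph{any} dissection $D=\left( t_{j}\right) $ of $\left[ 0,T\right] $ the bound gives $\sum_{j}\left\vert \gamma _{t_{j},t_{j+1}}^{1}-\gamma _{t_{j},t_{j+1}}^{2}\right\vert \leq C\sum_{j}\int_{t_{j}}^{t_{j+1}}W_{r}\left( \mu ^{1},\mu ^{2}\right) dr=C\int_{0}^{T}W_{r}\left( \mu ^{1},\mu ^{2}\right) dr$. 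For (\ref{lip}) one observes in addition that $r\mapsto W_{r}\left( \mu ^{1},\mu ^{2}\right) $ is non-decreasing: the pushforward of a coupling of $\mu ^{1}|_{\left[ 0,t\right] }$ and $\mu ^{2}|_{\left[ 0,t\right] }$ under the restriction map $P_{p,e,t}\rightarrow P_{p,e,r}$ is a coupling of $\mu ^{1}|_{\left[ 0,r\right] }$ and $\mu ^{2}|_{\left[ 0,r\right] }$, and $\rho _{p-var;\left[ 0,r\right] }\left( \mathbf{y}^{1},\mathbf{y}^{2}\right) \leq \rho _{p-var;\left[ 0,t\right] }\left( \mathbf{y}^{1},\mathbf{y}^{2}\right) $ when $r\leq t$; hence $\int_{s}^{t}W_{r}\left( \mu ^{1},\mu ^{2}\right) dr\leq \left( t-s\right) W_{t}\left( \mu ^{1},\mu ^{2}\right) $.

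To establish the pointwise bound, recall from (\ref{occ measure}) that $\gamma _{s,t}^{i}=\int_{s}^{t}T_{\mu _{r}^{i}}\,dr$ as a Bochner integral in $Lip^{1}\left( S_{p,e}\right) ^{\ast }$, so by linearity and the standard norm inequality for Bochner integrals it suffices to show, for each fixed $r$, that $\left\vert T_{\mu _{r}^{1}}-T_{\mu _{r}^{2}}\right\vert _{Lip^{1}\left( S_{p,e}\right) ^{\ast }}\leq CW_{r}\left( \mu ^{1},\mu ^{2}\right) $. Fix $\phi \in Lip^{1}\left( S_{p,e}\right) $ with $\left\vert \left\vert \phi \right\vert \right\vert _{Lip^{1}\left( S_{p,e}\right) }\leq 1$, so in particular $\phi $ is $1$-Lipschitz for $d_{\lfloor p\rfloor }$. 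Because $\mu _{r}^{i}=\left( \psi _{r}\right) _{\ast }\mu ^{i}$ and $\psi _{r}$ factors through the restriction of a rough path to $\left[ 0,r\right] $, we have $\left\langle T_{\mu _{r}^{i}},\phi \right\rangle =\int_{P_{p,e,r}}\left( \phi \circ \psi _{r}\right) d\left( \mu ^{i}|_{\left[ 0,r\right] }\right) $, whence, for every coupling $\pi $ of $\mu ^{1}|_{\left[ 0,r\right] }$ and $\mu ^{2}|_{\left[ 0,r\right] }$,
\[
\left\vert \left\langle T_{\mu _{r}^{1}}-T_{\mu _{r}^{2}},\phi \right\rangle \right\vert \leq \int \left\vert \phi \left( \psi _{r}(\mathbf{y}^{1})\right) -\phi \left( \psi _{r}(\mathbf{y}^{2})\right) \right\vert \pi \left( d\mathbf{y}^{1},d\mathbf{y}^{2}\right) \leq \int d_{\lfloor p\rfloor }\left( \psi _{r}(\mathbf{y}^{1}),\psi _{r}(\mathbf{y}^{2})\right) \pi \left( d\mathbf{y}^{1},d\mathbf{y}^{2}\right) .
\]
Now every element of $G\Omega _{p}\left( \mathbb{R}^{e}\right) $ is based at the group identity $1$, so $\psi _{r}(\mathbf{y}^{i})$ equals the increment $\mathbf{y}_{0,r}^{i}$, and choosing the trivial dissection $\left\{ 0,r\right\} $ in the supremum defining $\rho _{p-var;\left[ 0,r\right] }$ gives $d_{\lfloor p\rfloor }\left( \psi _{r}(\mathbf{y}^{1}),\psi _{r}(\mathbf{y}^{2})\right) =\max_{k\leq \lfloor p\rfloor }\left\vert \pi _{k}\left( \mathbf{y}_{0,r}^{1}-\mathbf{y}_{0,r}^{2}\right) \right\vert \leq \rho _{p-var;\left[ 0,r\right] }\left( \mathbf{y}^{1},\mathbf{y}^{2}\right) $. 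Substituting this, then taking the infimum over couplings $\pi $ and the supremum over admissible $\phi $, yields $\left\vert T_{\mu _{r}^{1}}-T_{\mu _{r}^{2}}\right\vert _{Lip^{1}\left( S_{p,e}\right) ^{\ast }}\leq CW_{r}\left( \mu ^{1},\mu ^{2}\right) $ (indeed with $C=1$; a general compatible tensor norm only affects the comparison constants).

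The one genuinely delicate ingredient is the inequality $d_{\lfloor p\rfloor }\left( \psi _{r}(\mathbf{y}^{1}),\psi _{r}(\mathbf{y}^{2})\right) \leq \rho _{p-var;\left[ 0,r\right] }\left( \mathbf{y}^{1},\mathbf{y}^{2}\right) $: for rough paths with arbitrary, distinct starting points the time-$r$ evaluation map is only \emph{locally} Lipschitz, with a constant that grows in the $p$-variation of the paths, and a naive estimate of this type would force higher-moment hypotheses on $\mu ^{1}$ and $\mu ^{2}$. What saves the argument is exactly that all elements of $G\Omega _{p}\left( \mathbb{R}^{e}\right) $ share the basepoint $1$, which kills the nonlinear cross-terms in the decomposition $\mathbf{y}_{r}=\mathbf{y}_{0}\otimes \mathbf{y}_{0,r}$ and makes $\psi _{r}$ globally $1$-Lipschitz from $\left( P_{p,e,r},\rho _{p-var;\left[ 0,r\right] }\right) $ into $\left( S_{p,e},d_{\lfloor p\rfloor }\right) $; the remaining ingredients---properties of the Bochner integral, the coupling characterisation of the Wasserstein distance, and additivity of the $\gamma ^{i}$---are routine.
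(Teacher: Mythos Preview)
Your proof is correct and follows essentially the same route as the paper's: both bound the dual norm by pairing with a Lipschitz test function, pushing a coupling of the path measures $\mu^{1}|_{[0,t]}$, $\mu^{2}|_{[0,t]}$ forward under the evaluation map, and using $d_{\lfloor p\rfloor}(\psi_{r}(\mathbf{y}^{1}),\psi_{r}(\mathbf{y}^{2}))\leq\rho_{p\text{-var};[0,r]}(\mathbf{y}^{1},\mathbf{y}^{2})$ (the paper leaves this step implicit, whereas you justify it via the common basepoint at $1$). The only difference is organizational---you first establish the sharper integral bound $C\int_{s}^{t}W_{r}\,dr$ and derive both (\ref{lip}) and (\ref{var}) from it via monotonicity of $r\mapsto W_{r}$, while the paper takes a $\sup_{r\in[s,t]}$ to get (\ref{lip}) directly and declares (\ref{var}) elementary.
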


\begin{proof}
Let $\phi\in Lip^{1}\left(  S_{p,e}\right)  $ with $\left\vert \left\vert
\phi\right\vert \right\vert _{Lip^{1}\left(  S_{p}\right)  ^{\ast}}=1,$ then
from (\ref{boch}) we can deduce that
\begin{align*}
\left\vert \left\langle \gamma_{s,t}^{1}-\gamma_{s,t}^{2},\phi\right\rangle
\right\vert  &  =\left\vert \int_{s}^{t}\left[  \int_{S_{p,e}}\phi\left(
\mathbf{g}\right)  \mu_{r}^{1}\left(  d\mathbf{g}\right)  -\int_{S_{p}}%
\phi\left(  \mathbf{g}\right)  \mu_{r}^{2}\left(  d\mathbf{g}\right)  \right]
dr\right\vert \\
&  \leq\left(  t-s\right)  \sup_{r\in\left[  s,t\right]  }\int_{S_{p,e}\times
S_{p,e}}d_{\lfloor p\rfloor}\left(  \mathbf{g}^{1},\mathbf{g}^{2}\right)
\pi_{r}\left(  d\mathbf{g}^{1},d\mathbf{g}^{2}\right)  ,
\end{align*}
where $\pi_{r}$ is an element of $\mathcal{M}_{1}\left(  S_{p,e}\times
S_{p,e}\right)  $ with marginal distributions $\mu_{r}^{1}$ and $\mu_{r}^{2}$.
$\ $For any such $\pi_{r}$ and every $r\in\left[  s,t\right]  $ we have%
\[
\int_{S_{p,e}\times S_{p,e}}d_{\lfloor p\rfloor}\left(  \mathbf{g}%
^{1},\mathbf{g}^{2}\right)  \pi_{r}\left(  d\mathbf{g}^{1},d\mathbf{g}%
^{2}\right)  \leq C\int_{P_{p,e,t}\times P_{p,e,t}}\rho_{p-var;\left[
0,t\right]  }\left(  \mathbf{y}^{1},\mathbf{y}^{2}\right)  \pi\left(
d\mathbf{y}^{1},d\mathbf{y}^{2}\right)  ,
\]
where $\pi$ in $\mathcal{M}_{1}\left(  P_{p,e,t}\times P_{p,e,t}\right)  $ is
any coupling of $\mu^{1}|_{\left[  0,t\right]  }$ and $\mu^{2}|_{\left[
0,t\right]  }.$ This yields
\[
\sup_{r\in\left[  s,t\right]  }\int_{S_{p,e}\times S_{p,e}}d_{\lfloor
p\rfloor}\left(  \mathbf{g}^{1},\mathbf{g}^{2}\right)  \pi_{r}\left(
d\mathbf{g}^{1},d\mathbf{g}^{2}\right)  \leq W_{t}\left(  \mu^{1},\mu
^{2}\right)  ,
\]
which implies (\ref{lip}) at once. Deducing (\ref{var}) from (\ref{lip}) is
then elementary.
\end{proof}

\subsection{A fixed-point theorem}

Suppose that $p\geq1$ and $\mathbf{x}$ is an element of $G\Omega_{p}\left(
%TCIMACRO{\U{211d} }%
%BeginExpansion
\mathbb{R}
%EndExpansion
^{d}\right)  ,$ then we write $\omega_{\mathbf{x}}$ for the control induced by
$\mathbf{x}$ via
\[
\omega_{\mathbf{x}}\left(  s,t\right)  \equiv\left\vert \left\vert
\mathbf{x}\right\vert \right\vert _{p-var;\left[  s,t\right]  }^{p}.
\]
The following lemma gives a useful way of controlling $\omega_{\mathbf{x}%
}\left(  0,T\right)  $ in terms of the $\alpha$-local $p$-variation.

\begin{lemma}
\label{p-var bound}For any $\mathbf{x}$ in $G\Omega_{p}\left(
%TCIMACRO{\U{211d} }%
%BeginExpansion
\mathbb{R}
%EndExpansion
^{d}\right)  $ and any $\alpha>0,$ we have that
\[
\left\vert \left\vert \mathbf{x}\right\vert \right\vert _{p-var;\left[
0,T\right]  }^{p}=\omega_{\mathbf{x}}\left(  0,T\right)  \leq2^{p-1}\alpha
\max\left\{  1,\alpha^{-p}M_{\alpha,\left[  0,T\right]  }\left(
\omega_{\mathbf{x}}\right)  ^{p}\right\}
\]

\end{lemma}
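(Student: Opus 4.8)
The statement relates the global $p$-variation $\omega_{\mathbf{x}}(0,T)$ to the accumulated $\alpha$-local $p$-variation $M_{\alpha,[0,T]}(\omega_{\mathbf{x}})$, so the plan is to produce a covering of $[0,T]$ by consecutive intervals on each of which the control $\omega_{\mathbf{x}}$ is at most $\alpha$, count how many such intervals are needed, and then use super-additivity of the control together with the elementary inequality $(a+b)^p\le 2^{p-1}(a^p+b^p)$ (equivalently, super-additivity of $\omega^{1/p}$ only up to the constant $2^{p-1}$) to reassemble these pieces into a bound on $\omega_{\mathbf{x}}(0,T)$.

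\medskip

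\textbf{Step 1: construct the greedy partition.} Fix $\alpha>0$. Define $\tau_0=0$ and inductively $\tau_{k+1}=\inf\{t>\tau_k : \omega_{\mathbf{x}}(\tau_k,t)\ge\alpha\}\wedge T$. Because $\omega_{\mathbf{x}}$ is a continuous control vanishing on the diagonal, each step moves forward a positive amount, and on $[\tau_k,\tau_{k+1})$ we have $\omega_{\mathbf{x}}(\tau_k,\cdot)<\alpha$; by continuity $\omega_{\mathbf{x}}(\tau_k,\tau_{k+1})\le\alpha$ for every $k$ with $\tau_{k+1}<T$ (and the terminal piece is also $\le\alpha$). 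Let $n$ be the number of subintervals so produced, so $0=\tau_0<\tau_1<\dots<\tau_n=T$. One then needs that this construction terminates with finite $n$, which follows from the finiteness of $\omega_{\mathbf{x}}(0,T)=\|\mathbf{x}\|_{p\text{-var};[0,T]}^p<\infty$ (each completed subinterval contributes at least $\alpha$ to a super-additive functional).

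\medskip

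\textbf{Step 2: bound the number of pieces.} On all but possibly the last of these subintervals, $\omega_{\mathbf{x}}(\tau_k,\tau_{k+1})$ is in fact $\ge\alpha$ (by the definition of the infimum, for $k\le n-2$; the value at $\tau_{n-1}$ may be smaller). Since the partition $(\tau_k)$ is in particular an admissible partition in the supremum defining $M_{\alpha,[0,T]}(\omega_{\mathbf{x}})$ — each increment satisfies $\omega_{\mathbf{x}}(\tau_k,\tau_{k+1})\le\alpha$ — we get
\[
(n-1)\alpha\le\sum_{k=0}^{n-1}\omega_{\mathbf{x}}(\tau_k,\tau_{k+1})\le M_{\alpha,[0,T]}(\omega_{\mathbf{x}}),
\]
hence $n\le 1+\alpha^{-1}M_{\alpha,[0,T]}(\omega_{\mathbf{x}})$.

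\medskip

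\textbf{Step 3: reassemble.} Writing $\mathbf{x}_{0,T}=\mathbf{x}_{\tau_0,\tau_1}\otimes\cdots\otimes\mathbf{x}_{\tau_{n-1},\tau_n}$ and using sub-additivity of the homogeneous norm gives $\|\mathbf{x}\|_{p\text{-var};[0,T]}\le\sum_{k}\|\mathbf{x}\|_{p\text{-var};[\tau_k,\tau_{k+1}]}$, but this only yields a factor $n^{p}$ rather than the sharper $2^{p-1}$ in the claimed bound. To get exactly the stated inequality one instead argues directly on the control: super-additivity of $\omega_{\mathbf{x}}$ over the partition, combined with the two mutually exclusive cases $M:=\alpha^{-p}M_{\alpha,[0,T]}(\omega_{\mathbf{x}})^p\le 1$ (i.e. $\omega_{\mathbf{x}}(0,T)$ already bounded by $2^{p-1}\alpha$ via a single piece or a short argument) versus $M>1$. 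Concretely, one shows $\omega_{\mathbf{x}}(0,T)\le n^{p-1}\sum_k\omega_{\mathbf{x}}(\tau_k,\tau_{k+1})^{1}\cdot(\text{scaling})$... — the honest route is: by the power-mean inequality and $\omega_{\mathbf{x}}(\tau_k,\tau_{k+1})\le\alpha$,
\[
\omega_{\mathbf{x}}(0,T)\le n^{p-1}\sum_{k=0}^{n-1}\Big(\tfrac{\omega_{\mathbf{x}}(\tau_k,\tau_{k+1})}{\alpha}\Big)\alpha\cdot\alpha^{p-1}\cdot n^{1-p}\cdots
\]
which after inserting $n\le 1+\alpha^{-1}M_{\alpha,[0,T]}(\omega_{\mathbf{x}})$ and treating $M_{\alpha}\le\alpha^p$ and $M_{\alpha}>\alpha^p$ separately (the "$1+$" is absorbed by a factor $2$ in the first case and is negligible in the second) delivers the $2^{p-1}\alpha\max\{1,\alpha^{-p}M_{\alpha,[0,T]}(\omega_{\mathbf{x}})^p\}$ bound.

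\medskip

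\textbf{Main obstacle.} The combinatorics of Steps 1–2 are routine; the delicate point is extracting precisely the constant $2^{p-1}$ and the particular form of the $\max$ in Step 3. The natural crude estimate gives $n^{p-1}$ (from power-mean) or $n^p$ (from sub-additivity of the norm) in front of $\alpha$, and one must be careful about how the "$+1$" in the bound $n\le 1+\alpha^{-1}M_{\alpha}$ interacts with the cases $M_{\alpha}\lessgtr\alpha^p$ — in the regime $M_{\alpha}\le\alpha^p$ there is essentially one piece so $\omega_{\mathbf{x}}(0,T)\le 2^{p-1}\alpha$ should fall out directly, while for $M_{\alpha}>\alpha^p$ one has $n\le 2\alpha^{-1}M_{\alpha}$ and needs the exponent bookkeeping $\alpha\cdot(\alpha^{-1}M_{\alpha})^{p-1}\cdot\alpha^{?}$ to close up to exactly $\alpha^{1-p}M_{\alpha}^p$. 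I expect this constant-chasing, rather than any conceptual difficulty, to be where the real work lies.
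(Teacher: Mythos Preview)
Your Steps 1--2 are fine and match the paper's setup. The gap is in Step 3, and it is conceptual rather than mere constant-chasing.

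The quantity $\omega_{\mathbf{x}}(0,T)=\|\mathbf{x}\|_{p\text{-var};[0,T]}^p$ is a \emph{supremum over all partitions}. Constructing a single greedy partition $(\tau_k)$ and then invoking ``super-additivity of $\omega_{\mathbf{x}}$'' gives you nothing: super-additivity says $\sum_k\omega_{\mathbf{x}}(\tau_k,\tau_{k+1})\le\omega_{\mathbf{x}}(0,T)$, which is the wrong direction. Your displayed inequality beginning $\omega_{\mathbf{x}}(0,T)\le n^{p-1}\sum\ldots$ is unjustified as written. Likewise, the first sentence of Step~3 is not what sub-additivity of the homogeneous norm delivers: that only bounds the single increment $\|\mathbf{x}_{0,T}\|$, not the $p$-variation norm over $[0,T]$.

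The paper fixes this by reversing the order of operations. One starts from an \emph{arbitrary} partition $D=(t_j)$ of $[0,T]$ (as one must, to bound a supremum) and then, inside each $[t_{j-1},t_j]$, runs the greedy construction to produce a refinement with all pieces of $\omega_{\mathbf{x}}$-size $\le\alpha$. Sub-additivity of the CC norm plus power-mean on each $[t_{j-1},t_j]$ gives
\[
\|\mathbf{x}_{t_{j-1},t_j}\|^p\le\bigl(N_{\alpha,[0,T]}(\omega_{\mathbf{x}})+1\bigr)^{p-1}\sum_{\text{greedy pieces in }[t_{j-1},t_j]}\|\mathbf{x}_{\sigma_{i-1},\sigma_i}\|^p,
\]
since the local greedy count on $[t_{j-1},t_j]$ is at most the global $N_{\alpha,[0,T]}$. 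Summing over $j$, the right-hand sums assemble into one admissible partition for $M_{\alpha}$, so
\[
\sum_j\|\mathbf{x}_{t_{j-1},t_j}\|^p\le\bigl(N_{\alpha,[0,T]}+1\bigr)^{p-1}M_{\alpha,[0,T]}(\omega_{\mathbf{x}}),
\]
uniformly in $D$. Taking the supremum and using $N_{\alpha,[0,T]}\le\alpha^{-1}M_{\alpha,[0,T]}$ gives the result; the constant $2^{p-1}$ then drops out of the one-line bound $(1+a)^{p-1}\le 2^{p-1}\max\{1,a^{p-1}\}$, with no case analysis needed. So the difficulty you anticipate in your ``Main obstacle'' paragraph dissolves once the argument is organised around an arbitrary partition rather than a single greedy one.
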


\begin{proof}
Fix $\alpha>0$, and let $D=\left(  t_{i}:i=0,1....,n\right)  $ be an arbitrary
partition of $\left[  0,T\right]  .$ We aim to estimate%
\[
\sum_{i=1}^{n}\left\vert \left\vert \mathbf{x}_{t_{i-1},t_{i}}\right\vert
\right\vert ^{p}:=\sum_{i=1}^{n}d_{CC}\left(  \mathbf{x}_{t_{i-1}}%
,\mathbf{x}_{t_{i}}\right)  ^{p}%
\]
Let $t_{j-1}$ and $t_{j}$ be any two consecutive points in $D,$ and define
$\sigma_{0}=t_{j-1}$ and
\[
\sigma_{i+1}=\inf\left\{  t>\sigma_{i}:\omega\left(  \sigma_{i},t\right)
=\alpha\right\}  \wedge t_{j}%
\]
for $i\in%
%TCIMACRO{\U{2115} }%
%BeginExpansion
\mathbb{N}
%EndExpansion
$. \ Define
\[
N_{\alpha,\left[  t_{j-1},t_{j}\right]  }\left(  \omega_{\mathbf{x}}\right)
=\sup\left\{  n\in%
%TCIMACRO{\U{2115} }%
%BeginExpansion
\mathbb{N}
%EndExpansion
\cup\left\{  0\right\}  :\sigma_{n}<t_{j}\right\}
\]
\ A simple calculation shows that $t_{N}=t_{j}$ if $N=N_{\alpha,\left[
t_{j-1},t_{j}\right]  }\left(  \omega_{\mathbf{x}}\right)  +1$, and therefore%
\begin{align*}
\left\vert \left\vert \mathbf{x}_{t_{j-1},t_{j}}\right\vert \right\vert ^{p}
&  \leq\left(  \sum_{i=1}^{N+1}\left\vert \left\vert \mathbf{x}_{\sigma
_{i-1},\sigma_{j}}\right\vert \right\vert \right)  ^{p}\\
&  \leq\left(  N+1\right)  ^{p-1}\sum_{i=1}^{N+1}\left\vert \left\vert
\mathbf{x}_{\sigma_{i-1},\sigma_{j}}\right\vert \right\vert ^{p}\leq\left(
N_{\alpha,\left[  0,T\right]  }\left(  \omega_{\mathbf{x}}\right)  +1\right)
^{p-1}\sum_{i=1}^{N+1}\left\vert \left\vert \mathbf{x}_{\sigma_{i-1}%
,\sigma_{j}}\right\vert \right\vert ^{p}.
\end{align*}
Using this observation it is easy to deduce that
\begin{equation}
\sum_{i=1}^{n}\left\vert \left\vert \mathbf{x}_{t_{i-1},t_{i}}\right\vert
\right\vert ^{p}\leq\left(  N_{\alpha,\left[  0,T\right]  }\left(
\omega_{\mathbf{x}}\right)  +1\right)  ^{p-1}M_{\alpha,\left[  0,T\right]
}\left(  \omega_{\mathbf{x}}\right)  . \label{unif bound}%
\end{equation}
The claimed bounded follows by first noticing that $N_{\alpha,\left[
0,T\right]  }\left(  \omega_{\mathbf{x}}\right)  \leq\alpha^{-1}%
M_{\alpha,\left[  0,T\right]  }\left(  \omega\right)  ,$ and then taking the
supremum over all partitions $D$ in (\ref{unif bound}).
\end{proof}

In order to prove the fixed point theorem we require integrability on the
preference measure. The subset of $\mathcal{M}\left(  P_{p,d}\right)  $ for
which the fixed-point theorem will hold is described by the following condition.

\begin{condition}
\label{exp inte}Let $p\geq1$. $\nu$ will denote a probability measure in
$\mathcal{M}\left(  P_{p,d}\right)  ,$ and $\phi_{\nu}$ will be pushforward
measure in $\mathcal{M}\left(  [0,\infty)\right)  $ defined by
\[
\phi_{\nu}:=\left[  M_{1,\left[  0,T\right]  }\left(  \omega_{\mathbf{\cdot}%
}\right)  \right]  _{\ast}\left(  \nu\right)  .
\]
We will assume that $\phi_{\nu}$ has well-defined moment-generating function;
i.e. for every $\theta$ in $%
%TCIMACRO{\U{211d} }%
%BeginExpansion
\mathbb{R}
%EndExpansion
$ we have%
\[
\int_{\lbrack0,\infty)}\exp\left[  \theta y\right]  \phi_{\nu}\left(
dy\right)  =\int_{P_{p,d}}\exp\left[  \theta M_{1,\left[  0,T\right]  }\left(
\omega_{\mathbf{x}}\right)  \right]  \nu\left(  d\mathbf{x}\right)  <\infty.
\]

\end{condition}

\begin{remark}
If Condition \ref{exp inte} is in force, then $\phi_{\nu}^{\alpha}:=\left[
M_{\alpha,\left[  0,T\right]  }\left(  \omega_{\mathbf{\cdot}}\right)
\right]  _{\ast}\left(  \nu\right)  $ will also have a well-defined moment
generating function for any $\alpha$ in $\left(  0,1\right)  $.
\end{remark}

For the reader's convenience, we recall some notation from Section
\ref{finite}. $\Psi:\mathcal{M}_{1}\left(  P_{p,e}\right)  \rightarrow
\mathcal{M}_{1}\left(  P_{p,e}\right)  $ is defined by
\[
\Psi=\Psi_{\nu}:\mu\mapsto\left[  \Theta_{V^{0},V}\left(  \mu,\cdot
,\cdot\right)  \right]  _{\ast}\left(  u_{0}\times\nu\right)  \in
\mathcal{M}_{1}\left(  P_{p,e}\right)  ,
\]
and fixed points of $\Psi_{\nu}$ correspond to solutions of the nonlinear
McKean-Vlasov RDE%
\begin{equation}
\left\{
\begin{array}
[c]{c}%
d\mathbf{Y}_{t}=V\left(  Y_{t}^{\mu}\right)  d\mathbf{X}_{t}+V^{0}\left(
Y_{t}^{\mu}\right)  d\gamma_{t}^{\mu}\\
\text{Law}\left(  \mathbf{Y}\right)  =\mu,\text{ Law}\left(  Y_{0}\right)
=u_{0}%
\end{array}
\right.  . \label{mkvn2}%
\end{equation}
We now formulate and prove our main existence and uniqueness theorem for
solutions to (\ref{mkvn2}).

\begin{theorem}
\label{fixed point}Let $\gamma>p\geq1$ and $\beta>1.$ Suppose $\nu$ be an
element of $\mathcal{M}_{1}\left(  P_{p,d}\right)  $ which satisfies Condition
\ref{exp inte}. Let $\left(  Y_{0},\mathbf{X}\right)  $ be a random variable
on a probability space $\left(  \Omega,\mathcal{F},P\right)  ,$ taking values
in $%
%TCIMACRO{\U{211d} }%
%BeginExpansion
\mathbb{R}
%EndExpansion
^{e}\times P_{p,d}$ and having law $u_{0}\times\nu.$ Then for any collection
of vector fields $V=\left(  V^{1},...,V^{d}\right)  $ in $Lip^{\gamma}\left(
%TCIMACRO{\U{211d} }%
%BeginExpansion
\mathbb{R}
%EndExpansion
^{e}\right)  $ and $V^{0}$ in $Lip^{\beta}\left(
%TCIMACRO{\U{211d} }%
%BeginExpansion
\mathbb{R}
%EndExpansion
^{e},L\left(  Lip^{1}\left(  S_{p,e}\right)  ^{\ast},%
%TCIMACRO{\U{211d} }%
%BeginExpansion
\mathbb{R}
%EndExpansion
^{e}\right)  \right)  $, there exists a unique solution to the nonlinear
McKean-Vlasov RDE (\ref{mkvn2}).
\end{theorem}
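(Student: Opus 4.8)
The plan is to realise the solution as the fixed point of the map $\Psi_{\nu}$ on a suitable complete metric space of probability measures, and to obtain existence and uniqueness simultaneously via a contraction (Banach fixed-point) argument carried out on short time intervals and then patched together. The natural space to work in is $\mathcal{M}_{1}\left( P_{p,e,T}\right) $ equipped with the Wasserstein metric $W_{T}$ built from the $\rho_{p-var;\left[ 0,T\right] }$-distance on rough path space; the first thing to check is that $\Psi_{\nu}$ actually maps this space into itself, i.e. that $\Theta_{V^{0},V}\left( \mu,Y_{0},\mathbf{X}\right) $ has finite first moment. This is where Condition \ref{exp inte} enters: combining the growth estimate for RDE solutions (a special case of Lemma \ref{lip est} / Corollary \ref{used bound} with one of the two solutions trivial) with Lemma \ref{p-var bound} bounds $\rho_{p-var;\left[ 0,T\right] }\left( 1,\Theta_{V^{0},V}\left( \mu,Y_{0},\mathbf{X}\right) \right) $ by something like $C\left(1+\left|Y_{0}\right|\right)\exp\left( C M_{\alpha,\left[0,T\right]}\left(\omega_{\mathbf{X}}\right)\right)$, and the moment-generating-function hypothesis on $\phi_{\nu}^{\alpha}$ makes this integrable against $u_{0}\times\nu$.

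**The contraction estimate.** The heart of the proof is to show that for $T$ small enough $\Psi_{\nu}$ is a strict contraction. Given $\mu^{1},\mu^{2}\in\mathcal{M}_{1}\left( P_{p,e,T}\right) $, couple them optimally and drive both RDEs by the same realisation $\left(Y_{0},\mathbf{X}\right)$; then $\Theta_{V^{0},V}\left( \mu^{1},Y_{0},\mathbf{X}\right)$ and $\Theta_{V^{0},V}\left( \mu^{2},Y_{0},\mathbf{X}\right)$ differ only through the drift paths $\gamma^{\mu^{1}}$ and $\gamma^{\mu^{2}}$, so Corollary \ref{used bound} gives, pathwise,
\[
\rho_{p-var;\left[ 0,T\right] }\left( \Theta\left(\mu^{1}\right),\Theta\left(\mu^{2}\right)\right) \leq C\,\omega_{\mathbf{X}}\left(0,T\right)^{N}\left\Vert \gamma^{\mu^{1}}-\gamma^{\mu^{2}}\right\Vert _{1-var;\left[ 0,T\right] }\exp\left( CM_{\alpha,\left[ 0,T\right] }\left( \omega_{\mathbf{X}}\right) \right),
\]
and Corollary \ref{fv bound} bounds $\left\Vert \gamma^{\mu^{1}}-\gamma^{\mu^{2}}\right\Vert _{1-var;\left[ 0,T\right] }$ by $C\int_{0}^{T}W_{t}\left( \mu^{1},\mu^{2}\right) dt\leq CT\,W_{T}\left( \mu^{1},\mu^{2}\right)$. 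Taking $L^{1}$-expectation over $\left(Y_{0},\mathbf{X}\right)$ and using Cauchy–Schwarz to separate the polynomial-times-exponential factor in $\omega_{\mathbf{X}}$ from (a constant times) $W_{T}\left(\mu^{1},\mu^{2}\right)$, Condition \ref{exp inte} again controls the resulting moment, yielding
\[
W_{T}\left( \Psi_{\nu}\mu^{1},\Psi_{\nu}\mu^{2}\right) \leq C\,T\,W_{T}\left( \mu^{1},\mu^{2}\right),
\]
with $C$ independent of $T$ (for $T\leq1$, say). Choosing $T_{0}$ with $CT_{0}<1$ gives a unique fixed point on $\left[0,T_{0}\right]$.

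**Patching and the main obstacle.** Having solved on $\left[0,T_{0}\right]$, one restarts from the time-$T_{0}$ marginals (a measure on $\mathbb{R}^{e}$ replacing $u_{0}$, still with finite first moment by the moment estimate above) and the shifted preference rough paths, and iterates; since $T_{0}$ depends only on the Lip norms of the vector fields and on $\nu$ through the moment-generating function bound — not on the current configuration — finitely many steps reach $T$, and the pieces glue to a unique global fixed point, which is by construction the unique solution of (\ref{mkvn2}). I expect the main technical obstacle to be the integrability bookkeeping in the contraction step: one must be careful that after applying Cauchy–Schwarz the exponent on $M_{\alpha,\left[0,T\right]}\left(\omega_{\mathbf{X}}\right)$ is still covered by Condition \ref{exp inte} (this is exactly why the condition is phrased for \emph{every} $\theta$, and why the remark extending it to $\phi_{\nu}^{\alpha}$, $\alpha\in(0,1)$, is needed), and that the constant $C$ can genuinely be taken uniform in $T$; also, to legitimately drive the two RDEs by a common $\mathbf{X}$ while varying the drift we are implicitly using that $\left(\mathbf{X},\gamma^{\mu}\right)$ needs no extra joint-lift data because $\gamma^{\mu}$ has bounded variation, which is the role of the weak-interaction simplification. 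Establishing that $\Psi_{\nu}$ lands in $\mathcal{M}_{1}$ with the right moment so that the restart is legitimate, and that measurability of $\omega\mapsto\Theta_{V^{0},V}\left(\mu,Y_{0}(\omega),\mathbf{X}(\omega)\right)$ holds (from continuity of the Itô–Lyons map, \cite{FV}), are the remaining routine points.
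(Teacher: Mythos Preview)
Your proposal is correct and follows the same overall strategy as the paper: a contraction argument for $\Psi_{\nu}$ on $\left(\mathcal{M}_{1}\left(P_{p,e}\right),W\right)$, using Corollary \ref{used bound} for the pathwise Lipschitz estimate, Corollary \ref{fv bound} to convert $\left\Vert\gamma^{\mu^{1}}-\gamma^{\mu^{2}}\right\Vert_{1\text{-var}}$ into a Wasserstein distance, and Condition \ref{exp inte} for the integrability.

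Two small points of contrast are worth noting. First, the paper keeps the integral form
\[
W_{T}\left(\Psi_{\nu}\mu^{1},\Psi_{\nu}\mu^{2}\right)\leq C\int_{0}^{T}W_{t}\left(\mu^{1},\mu^{2}\right)dt
\]
rather than bounding $\int_{0}^{T}W_{t}\,dt\leq T\,W_{T}$; iterating this $n$ times gives a Lipschitz constant $\left(CT\right)^{n}/n!$, so some power of $\Psi_{\nu}$ is a strict contraction on the whole interval $\left[0,T\right]$ and the unique fixed point follows without any small-time/patching step. Your route via small $T_{0}$ and restarting is fine, but the iteration avoids the bookkeeping about uniform $T_{0}$ and moment preservation at the restart. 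Second, your invocation of Cauchy--Schwarz is unnecessary: for fixed $\mu^{1},\mu^{2}$ the drift paths $\gamma^{\mu^{1}},\gamma^{\mu^{2}}$ are \emph{deterministic}, so $\left\Vert\gamma^{\mu^{1}}-\gamma^{\mu^{2}}\right\Vert_{1\text{-var}}$ pulls straight out of the expectation and only $E\left[\exp\left(CM_{\alpha,\left[0,T\right]}\left(\omega_{\mathbf{X}}\right)\right)\right]$ remains (the polynomial factor $\omega_{\mathbf{X}}(0,T)^{N}$ is absorbed into the exponential via Lemma \ref{p-var bound}, as you correctly anticipate). This simplifies the integrability check to exactly the statement of Condition \ref{exp inte}.
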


\begin{proof}
The space $\left(  P_{p,e},\rho_{p}\right)  $ \ is complete, and hence (see,
e.g., \cite{vill}) so is $\left(  \mathcal{M}_{1}\left(  P_{p,e}\right)
,W\right)  .$ Suppose $\mu_{1}$ and $\mu_{2}$ are in \ $\mathcal{M}_{1}\left(
P_{p,e}\right)  ,$ and let
\[
\mathbf{Y}^{i}=\Theta_{V^{0},V}\left(  \mu_{i},Y_{0},\mathbf{X}\right)  .
\]
Using Corollary \ref{used bound} together with Lemma \ref{p-var bound} we
obtain for any $\alpha$ in $(0,1]$ the bound%
\[
\rho_{p-var;\left[  0,T\right]  }\left(  \mathbf{Y}^{1},\mathbf{Y}^{2}\right)
\leq C\left\vert \left\vert \gamma^{\mu_{1}}-\gamma^{\mu_{2}}\right\vert
\right\vert _{1-var;\left[  0,T\right]  }\exp\left(  CM_{\alpha,\left[
0,T\right]  }\left(  \omega_{\mathbf{X}}\right)  \right)  ,
\]
for some $C=C_{1}\left(  \alpha\right)  >0.$ It follows from Corollary
\ref{fv bound} that
\[
\left\vert \left\vert \gamma^{\mu_{1}}-\gamma^{\mu_{2}}\right\vert \right\vert
_{1-var;\left[  0,T\right]  }\lesssim\int_{0}^{T}W_{t}\left(  \mu_{1},\mu
_{2}\right)  dt.
\]
And therefore by taking expectations in the previous inequality we obtain
\begin{align*}
W_{T}\left(  \Psi_{\nu}\left(  \mu_{1}\right)  ,\Psi_{\nu}\left(  \mu
_{2}\right)  \right)   &  \leq E\left[  \rho_{p-var;\left[  0,T\right]
}\left(  \mathbf{Y}^{1},\mathbf{Y}^{2}\right)  \right] \\
&  \leq CE\left[  \exp\left(  CM_{\alpha,\left[  0,T\right]  }\left(
\omega_{\mathbf{X}}\right)  \right)  \right]  \int_{0}^{T}W_{t}\left(  \mu
_{1},\mu_{2}\right)  dt,
\end{align*}
where Condition \ref{exp inte} ensures that the right hand side is finite. It
from a standard Banach-type contraction argument that the map $\Psi_{\nu}$ has
a unique fixed point.
\end{proof}

In light of the conditions of this theorem, it is useful to make some
observation about the type of processes which satisfy the key integrability
condition (Condition \ref{exp inte}). In the recent paper \cite{CLL} we
consider a continuous Gaussian process $X=\left(  X^{1},..,X^{d}\right)  $
with i.i.d. components such that:

\begin{enumerate}
\item $X$ has a natural lift to a geometric $p$-rough path $\mathbf{X;}$

\item The Cameron-Martin space associated to $X$ has the embedding property%
\[
\mathcal{H}\hookrightarrow C^{q\text{-var}}\left(  \left[  0,T\right]  ,%
%TCIMACRO{\U{211d} }%
%BeginExpansion
\mathbb{R}
%EndExpansion
^{d}\right)
\]
for some $1/p+1/q>1.$
\end{enumerate}

We then prove that for some $\eta>0$ we have
\[
E\left[  \exp\left[  \eta M_{\alpha,\left[  0,T\right]  }\left(
\omega_{\mathbf{X}}\right)  ^{2/q}\right]  \right]  <\infty.
\]
This class of examples is rich enough to include fractional Brownian motion
$H>1/4$ (for which $q$ can be chosen to ensure $2/q>1$)$,$ and other examples
of Gaussian processes which are genuinely rougher than Brownian motion (see
\cite{FV07}). The importance of the Lipschitz estimate in Corollary
\ref{used bound} can now be grasped more clearly. Since, as an immediate
corollary, we see that Condition \ref{exp inte} holds for the class of
measures described.

\begin{remark}
In some recent work \cite{ball},a flow-based approach is used to derive
continuity estimates for RDEs. An existence and uniqueness theorem is proved,
under the following condition on $\mathbf{X}$: for some family of random
variables $\left\{  C_{s}:s\in\left[  0,T\right]  \right\}  ,$ which is
bounded in $L^{1},$
\begin{equation}
\left\vert E\left[  \mathbf{X}_{s,t}^{k}|\mathcal{F}_{s}\right]  \right\vert
\leq C_{s}\left(  t-s\right)  ,\text{ }\forall\left[  s,t\right]
\subseteq\left[  0,T\right]  ,\text{ }k=1,...,\left\lfloor p\right\rfloor
.\label{SCALING}%
\end{equation}
This requirement forces some structure upon the sample paths of $X$, (for
example: smoothness, or independence of increments). It does not hold in
general for the examples illustrated above, where the sample paths are less
regular than Brownian motion. Indeed if $X$ is fBm with $H<1/2,$ then we have%
\[
t^{-1}E\left[  X_{0,t}^{2}\right]  =t^{2H-1}\uparrow\infty\text{ \ as
}t\downarrow0,
\]
which violates (\ref{SCALING}) when $s=0$. By contrast, the exponential
integrability required\ in Condition \ref{exp inte} holds both in this
example, and for the much wider class of Gaussian processes highlighted above.
\end{remark}

\subsection{Continuity in $\nu$}

Suppose we have a set of preference measures and for each measure in the set
the conditions of Theorem \ref{fixed point} hold, so that $\Psi_{\nu}\left(
\cdot\right)  $ has a unique fixed point. A very natural question is to ask
about the stability properties of this map. The rough path setup is
well-suited to tackle this sort of problem. To this end, let $K:\left(
0,\infty\right)  \longrightarrow\left(  0,\infty\right)  $ be a monotone
increasing real-valued function. Define a subset of $\mathcal{M}_{1}\left(
P_{p,d}\right)  $ by
\[
\mathcal{E}\left(  K\right)  =\left\{  \nu\in\mathcal{M}_{1}\left(
P_{p,d}\right)  :\forall\theta\in\left(  0,\infty\right)  ,\psi_{\nu}\left(
\theta\right)  \leq K\left(  \theta\right)  \right\}  ,
\]
where, as above,
\[
\psi_{\nu}\left(  \theta\right)  =\int_{[0,\infty)}\exp\left(  \theta
y\right)  \phi_{\nu}\left(  dy\right)  .
\]
It is easy to see that $\mathcal{E}\left(  K\right)  $ is a closed subset of
$\mathcal{M}_{1}\left(  P_{p,d}\right)  $ in the topology of weak convergence
of measures.

\begin{lemma}
Let $K:\left(  0,\infty\right)  \longrightarrow\left(  0,\infty\right)  $ be a
monotone increasing, then the map%
\begin{align*}
\Xi &  :\mathcal{E}\left(  K\right)  \rightarrow\mathcal{M}_{1}\left(
P_{e,d}\right) \\
\Xi &  :\nu\mapsto\text{ fixed point of }\Psi_{\nu}\left(  \cdot\right)
\end{align*}
is well-defined and continuous in the topology of weak convergence of measures
on\ $\mathcal{E}\left(  K\right)  .$
\end{lemma}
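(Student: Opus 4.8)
The plan is to establish continuity by a compactness-plus-uniqueness argument, combined with the contraction estimate from the proof of Theorem~\ref{fixed point}. First I would verify well-definedness: for $\nu\in\mathcal{E}(K)$, the bound $\psi_{\nu}(\theta)\leq K(\theta)<\infty$ for all $\theta$ gives Condition~\ref{exp inte}, so Theorem~\ref{fixed point} applies and $\Xi(\nu)$ is the unique fixed point of $\Psi_{\nu}$; moreover one should check $\Xi(\nu)\in\mathcal{M}_1(P_{p,e})$, which follows from the moment estimates in Corollary~\ref{used bound} (applied with $\mathbf{y}^2$ the trivial path) together with the exponential integrability. Second, I would take a sequence $\nu_n\to\nu$ weakly in $\mathcal{E}(K)$ and write $\mu_n=\Xi(\nu_n)$, $\mu=\Xi(\nu)$; the goal is $W(\mu_n,\mu)\to0$.

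The key step is to decompose the distance. Realise $(Y_0,\mathbf{X}_n)$ with law $u_0\times\nu_n$ and $(Y_0,\mathbf{X})$ with law $u_0\times\nu$ on a common space, coupled so that $\mathbf{X}_n\to\mathbf{X}$ in $\rho_{p\text{-}var}$ in probability (possible since $\nu_n\to\nu$ weakly on the Polish space $P_{p,d}$, via Skorokhod). Then split
\[
W(\mu_n,\mu)=W(\Psi_{\nu_n}(\mu_n),\Psi_{\nu}(\mu))\leq W(\Psi_{\nu_n}(\mu_n),\Psi_{\nu_n}(\mu))+W(\Psi_{\nu_n}(\mu),\Psi_{\nu}(\mu)).
\]
For the first term I would use exactly the contraction estimate derived in the proof of Theorem~\ref{fixed point}: from Corollary~\ref{used bound}, Lemma~\ref{p-var bound} and Corollary~\ref{fv bound},
\[
W_T(\Psi_{\nu_n}(\mu_n),\Psi_{\nu_n}(\mu))\leq C\,E\!\left[\exp\!\left(CM_{\alpha,[0,T]}(\omega_{\mathbf{X}_n})\right)\right]\int_0^T W_t(\mu_n,\mu)\,dt,
\]
and here the point of restricting to $\mathcal{E}(K)$ is that the constant $C\,E[\exp(CM_{\alpha,[0,T]}(\omega_{\mathbf{X}_n}))]\leq C\,K(C)$ is \emph{uniform} in $n$ (choosing $\alpha\in(0,1]$ small enough that $CM_{\alpha}\leq CM_1$ and invoking the Remark after Condition~\ref{exp inte}). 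The second term, $W(\Psi_{\nu_n}(\mu),\Psi_{\nu}(\mu))$, is a genuine continuity-in-the-driver statement with the \emph{fixed} measure $\mu$: it is bounded by $E[\rho_{p\text{-}var}(\Theta_{V^0,V}(\mu,Y_0,\mathbf{X}_n),\Theta_{V^0,V}(\mu,Y_0,\mathbf{X}))]$, which tends to $0$ by the Universal Limit Theorem (continuity of the Itô–Lyons map) together with uniform integrability supplied by the $\mathcal{E}(K)$ bound, so one can pass from convergence in probability to convergence in $L^1$. Denoting this vanishing quantity by $\varepsilon_n\to0$, one arrives at a Grönwall-type inequality
\[
W_T(\mu_n,\mu)\leq \varepsilon_n + C_K\int_0^T W_t(\mu_n,\mu)\,dt,
\]
and since the same estimate holds on every $[0,t]$, Grönwall's lemma gives $W_T(\mu_n,\mu)\leq \varepsilon_n e^{C_K T}\to0$.

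The main obstacle I expect is the second term: controlling $W(\Psi_{\nu_n}(\mu),\Psi_{\nu}(\mu))$ requires not just the qualitative continuity of the solution map in its driving rough path, but enough \emph{uniform integrability} in $n$ to upgrade convergence in probability of the integrands to convergence of their expectations. This is where the uniform moment-generating-function bound on $\mathcal{E}(K)$ is essential: via Corollary~\ref{used bound} one dominates $\rho_{p\text{-}var}(\mathbf{Y}_n,\mathbf{Y})$ by an expression of the form $(\text{polynomial in }\omega)\exp(CM_{\alpha}(\omega_{\mathbf{X}_n}))$, whose $L^{1+\delta}$-norms are bounded uniformly in $n$ by $K$ evaluated at a slightly larger argument, giving the required uniform integrability. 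A secondary technical point is checking that the coupling/Skorokhod construction is compatible with the definition of $\Psi_{\nu_n}$ through $u_0\times\nu_n$ — but since $u_0$ is held fixed this is routine.
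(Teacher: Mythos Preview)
Your proposal is correct and follows essentially the same route as the paper: Skorokhod coupling of $\mathbf{X}_n\to\mathbf{X}$, the Lipschitz estimate of Corollary~\ref{used bound} combined with Lemma~\ref{p-var bound}, Corollary~\ref{fv bound} to convert the drift difference into $\int_0^T W_t(\mu_n,\mu)\,dt$, uniform integrability from the $\mathcal{E}(K)$ bound, and Gronwall. The only cosmetic difference is that you insert the intermediate point $\Psi_{\nu_n}(\mu)$ and treat the ``different drift'' and ``different driver'' contributions separately, whereas the paper applies the two-variable Lipschitz estimate once to $\Theta(\mu_n,Y_0,\mathbf{X}_n)$ versus $\Theta(\mu,Y_0,\mathbf{X})$ and obtains both contributions simultaneously; the resulting Gronwall inequality $W_T(\mu_n,\mu)\le C_2\int_0^T W_t(\mu_n,\mu)\,dt + C_1 a_n$ with $a_n\to 0$ is identical.
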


\begin{proof}
Suppose that $\left(  \nu_{n}\right)  _{n=1}^{\infty}$ is a sequence of
measures in $\mathcal{E}\left(  K\right)  $ such that $\nu_{n}\Rightarrow
\nu\in\mathcal{E}\left(  K\right)  $ as $n\rightarrow\infty$; we will show
that $W_{T}\left(  \Xi\left(  \nu_{n}\right)  ,\Xi\left(  \nu\right)  \right)
\rightarrow0$ as $n\rightarrow\infty.$ By Skorohod's lemma there exists a
probability space carrying: (i) an $%
%TCIMACRO{\U{211d} }%
%BeginExpansion
\mathbb{R}
%EndExpansion
^{e}$-valued random variable $Y_{0}$ with law $u_{0}$, and (ii) a sequence of
($P_{p,d}$-valued) random variables $\left(  \mathbf{X}^{\nu_{n}}\right)
_{n=1}^{\infty}$ and \ $\mathbf{X}^{\nu}$, such that $\mathbf{X}^{\nu}$ has
law $\nu$, $\mathbf{X}^{\nu_{n}}$ has law $\nu_{n}$ for every $n,$ and
\[
\mathbf{X}^{\nu_{n}}\rightarrow\ \mathbf{X}^{\nu}\text{ a.s. in }%
\rho_{p\text{-var}}.
\]
Let $\Xi\left(  \nu_{n}\right)  =\mu_{n}$, $\Xi\left(  \nu\right)  =\mu$ \ and
$\mathbf{Y}^{\nu_{n}}=\Theta_{V^{0},V}\left(  \mu_{n},Y_{0},\mathbf{X}%
^{\nu_{n}}\right)  $, $\mathbf{Y}^{\nu}=\Theta_{V^{0},V}\left(  \mu
,Y_{0},\mathbf{X}^{\nu}\right)  .$ We then have that
\[
\text{Law}\left(  \mathbf{Y}^{\nu_{n}}\right)  =\mu_{n}=\left[  \Theta
_{V_{0},V}\left(  \mu_{n},\cdot,\mathbf{\cdot}\right)  \right]  _{\ast}\left(
u_{0},\nu_{n}\right)  ,
\]
and similarly for Law$\left(  \mathbf{Y}^{\nu}\right)  .$ The estimates of
Corollary \ref{used bound} and Lemma \ref{p-var bound} show that for some
non-random $C_{1}>0:$
\[
\rho\left(  \mathbf{Y}^{\mathbf{\nu}_{n}},\mathbf{Y}^{\mathbf{\nu}}\right)
\leqslant C_{1}\left[  \left\vert \left\vert \gamma^{\nu_{n}}-\gamma^{\nu
}\right\vert \right\vert _{1-var;\left[  0,T\right]  }+\left\vert \left\vert
\mathbf{X}^{\nu_{n}}-\mathbf{X}^{\nu}\right\vert \right\vert _{p-var;\left[
0,T\right]  }\right]  \exp\left(  C_{1}M_{\alpha,\left[  0,T\right]  }\left(
\omega^{\nu_{n},\nu}\right)  \right)  ,
\]
where
\[
\omega^{\nu_{n},\nu}\left(  s,t\right)  \equiv\left\vert \left\vert
\mathbf{X}^{\nu_{n}}\right\vert \right\vert _{p-var;\left[  s,t\right]  }%
^{p}+\left\vert \left\vert \mathbf{X}^{\nu}\right\vert \right\vert
_{p-var;\left[  s,t\right]  }^{p}.
\]
Taking expectations and then making use of Corollary \ref{fv bound} and
Condition \ref{exp inte} it is easy to derive that
\begin{equation}
W_{T}\left(  \mu_{n},\mu\right)  \leq C_{2}\int_{0}^{T}W_{t}\left(  \mu
_{n},\mu\right)  dt+C_{1}a_{n}, \label{b}%
\end{equation}
where
\[
a_{n}:=E\left[  \left\vert \left\vert \mathbf{X}^{\nu_{n}}-\mathbf{X}^{\nu
}\right\vert \right\vert _{p-var;\left[  0,T\right]  }\exp\left(
CM_{\alpha,\left[  0,T\right]  }\left(  \omega^{\nu_{n},\nu}\right)  \right)
\right]  .
\]
A simple argument using the definition\footnote{In particular boundedness
(uniform in $n$) of the moment generating functions of $\nu_{n}$.} of
$\mathcal{E}\left(  K\right)  $ shows that $a_{n}\rightarrow0$ as
$n\rightarrow\infty.$ Finally, we use Gronwall's inequality in (\ref{b}) to
give%
\[
W_{T}\left(  \mu_{n},\mu\right)  \leq C_{3}a_{n}\exp\left(  C_{3}T\right)
\rightarrow0\text{ as }n\rightarrow\infty.
\]

\end{proof}

\section{Applications\label{propa}}

As an application of our uniqueness theorem, we prove the classical
\textit{propagation of chaos }phenomenon (see Sznitman \cite{sznitman}) for
the finite interacting particle system. This is the observation that, granted
sufficient symmetry to the interaction and initial configuration, then in the
large-population limit any finite subcollection of particles resembles the
evolution independent particles, each having the law of the nonlinear
McKean-Vlasov RDE. To make progress, let $\nu$ be a fixed preference measure
in $\mathcal{M}_{1}\left(  P_{p,d}\right)  $ which satisfies Condition
\ref{exp inte}. Assume that $\left\{  \left(  \mathbf{X}^{i},Y_{0}^{i}\right)
:i\in%
%TCIMACRO{\U{2115} }%
%BeginExpansion
\mathbb{N}
%EndExpansion
\right\}  $ and $\left(  \mathbf{X,}Y_{0}\right)  $ are i.i.d. $%
%TCIMACRO{\U{211d} }%
%BeginExpansion
\mathbb{R}
%EndExpansion
^{e}\times P_{p,d}-$valued random variables each with law $u_{0}\times\nu$ and
defined on the same probability space $\left(  \Omega,\mathcal{F},P\right)  $.
Suppose $\mu$ is the unique fixed point of $\Psi_{\nu}\left(  \cdot\right)  ,$
and let
\[
\mathbf{Y}=\Theta_{V^{0},V}\left(  \mu,Y_{0},\mathbf{X}\right)  .
\]
From Lemma \ref{wd} we can interpret the trajectories of individual particles
in the community%
\[
d\mathbf{Y}_{t}^{i,N}=\frac{1}{N}\sum_{j=1}^{N}\sigma\left(  Y_{t}^{i,N}%
,Y_{t}^{j,N}\right)  dt+V\left(  Y_{t}^{i}\right)  d\mathbf{X}_{t}^{i},\text{
}Y_{0}^{i,N}=Y_{0}^{i}%
\]
as projections of the solution to a system of rough differential equation
driven by any rough path in $P_{p,Nd}$ which consistently lifts $\mathbf{X}%
^{1},...,\mathbf{X}^{N}$. Indeed, we showed that $\mathbf{Y}^{i}$ is then
well-defined as%
\[
\mathbf{Y}^{i,N}=\Theta_{V^{0},V}\left(  \mu^{N},Y_{0}^{i},\mathbf{X}%
^{i}\right)  ,\text{ }i=1,....,N
\]
where $V^{0}$ in $Lip^{\beta}\left(
%TCIMACRO{\U{211d} }%
%BeginExpansion
\mathbb{R}
%EndExpansion
^{e},L\left(  Lip^{1}\left(  S_{p,e}\right)  ^{\ast},%
%TCIMACRO{\U{211d} }%
%BeginExpansion
\mathbb{R}
%EndExpansion
^{e}\right)  \right)  $ is defined in terms of the interaction kernel $\sigma$
by
\begin{equation}
V^{0}\left(  y\right)  \left(  \mu\right)  =\left\langle \mu,\sigma\left(
y,\cdot\right)  \right\rangle , \label{inter}%
\end{equation}
and $\mu^{N}=\mu^{N}\left(  \omega\right)  $ is the empirical measure
\[
\mu^{N}=\frac{1}{N}\sum_{j=1}^{N}\delta_{\mathbf{Y}^{j,N}}.
\]
For every $N,$ $\left\{  \mathbf{Y}^{1,N},\mathbf{Y}^{2,N},...,\mathbf{Y}%
^{N,N}\right\}  $ is an exchangeable system of random variables, and a
classical result of \cite{sznitman} shows that propagation of chaos is
equivalent to proving that
\[
\mu^{N}\Rightarrow\mu.
\]

\begin{remark}
Explicitly, this assertion says that the law of the random variable
\[
\mu^{N}\left(  \omega\right)  \in\mathcal{M}_{1}\left(  P_{p}\right)  ,
\]
which is a probability measure in $\mathcal{M}_{1}\left(  \mathcal{M}%
_{1}\left(  P_{p}\right)  \right)  ,$ converges weakly as $N\rightarrow\infty$
to the probability measure $\delta_{\mu},$ which is the law of the constant
random variable $\mu.$
\end{remark}

\begin{theorem}
Let $\gamma>p\geq1$, $\beta>1$ and $y_{0}\in%
%TCIMACRO{\U{211d} }%
%BeginExpansion
\mathbb{R}
%EndExpansion
^{e}.$ Suppose that $\nu$ is a given preference measure in $\mathcal{M}%
_{1}\left(  P_{p,d}\right)  $ which satisfies Condition \ref{exp inte}. Assume
that the vector fields $V=\left(  V^{1},...,V^{d}\right)  $ belong to
$Lip^{\gamma}\left(
%TCIMACRO{\U{211d} }%
%BeginExpansion
\mathbb{R}
%EndExpansion
^{e}\right)  ,$ and $V^{0}$ defined by (\ref{inter}) is in $Lip^{\beta}\left(
%
%TCIMACRO{\U{211d} }%
%BeginExpansion
\mathbb{R}
%EndExpansion
^{e},L\left(  Lip\left(  S_{p,e}\right)  ^{\ast},%
%TCIMACRO{\U{211d} }%
%BeginExpansion
\mathbb{R}
%EndExpansion
^{e}\right)  \right)  .$ Let $\mu$ denote the unique fixed point of
map$\ \Psi_{\nu}\left(  \cdot\right)  $ which results from\ Theorem
\ref{fixed point}. Assume further that $\left\{  \left(  Y_{0}^{i}%
,\mathbf{X}^{i}\right)  :i\in%
%TCIMACRO{\U{2115} }%
%BeginExpansion
\mathbb{N}
%EndExpansion
\right\}  $ is a collection of i.i.d. $%
%TCIMACRO{\U{211d} }%
%BeginExpansion
\mathbb{R}
%EndExpansion
^{e}\times P_{p,d}-$valued random variables, with law $u_{0}\times\nu,$
defined on the same probability space. For each $N\in%
%TCIMACRO{\U{2115} }%
%BeginExpansion
\mathbb{N}
%EndExpansion
$ let $\left\{  \mathbf{Y}^{i,N}:1=1,....,N\right\}  $ be the solution to the
particle system
\[
d\mathbf{Y}_{t}^{i,N}=\frac{1}{N}\sum_{j=1}^{N}\sigma\left(  Y_{t}^{i,N}%
,Y_{t}^{j,N}\right)  dt+V\left(  Y_{t}^{i}\right)  d\mathbf{X}_{t}%
^{i},\mathbf{Y}_{0}^{i,N}=Y_{0}^{i}\in%
%TCIMACRO{\U{211d} }%
%BeginExpansion
\mathbb{R}
%EndExpansion
^{e}.
\]
Then as $N\rightarrow\infty$
\begin{equation}
\frac{1}{N}\sum_{j=1}^{N}\delta_{\mathbf{Y}^{j,N}}=:\mu^{N}\Rightarrow\mu,
\label{prop}%
\end{equation}
and the particle system exhibits propagation of chaos.
\end{theorem}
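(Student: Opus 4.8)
The plan is to run Sznitman's classical coupling argument, now powered by the deterministic RDE estimates of the previous section rather than by martingale arguments. First I would introduce the \emph{decoupled} nonlinear processes
\[
\mathbf{Y}^{i}:=\Theta_{V^0,V}\!\left(\mu,Y_0^{i},\mathbf{X}^{i}\right),\qquad i=1,\dots,N,
\]
which, since $\mu=\Psi_\nu(\mu)$ and the pairs $(Y_0^{i},\mathbf{X}^{i})$ are i.i.d.\ with law $u_0\times\nu$, are themselves i.i.d.\ with common law $\mu\in\mathcal{M}_1\!\left(P_{p,e}\right)$. Writing $\bar\mu^{N}:=\frac1N\sum_{i=1}^{N}\delta_{\mathbf{Y}^{i}}$, the triangle inequality for the Wasserstein distance gives, for every $t\in[0,T]$,
\[
W_t\!\left(\mu^{N},\mu\right)\le W_t\!\left(\mu^{N},\bar\mu^{N}\right)+W_t\!\left(\bar\mu^{N},\mu\right).
\]
For the second term, the strong law of large numbers for empirical measures (Varadarajan's theorem) gives $\bar\mu^{N}\Rightarrow\mu$ a.s., and since $\mu\in\mathcal{M}_1\!\left(P_{p,e}\right)$ encodes $\int\rho_{p-var;[0,T]}(\mathbf{1},\mathbf{y})\,\mu(d\mathbf{y})<\infty$, the scalar strong law upgrades this to $W_T(\bar\mu^{N},\mu)\to 0$ a.s.; as restriction to $[0,t]$ is $1$-Lipschitz for $\rho_{p-var}$, we in fact get $\sup_{t\le T}W_t(\bar\mu^{N},\mu)\to 0$ a.s.

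For the first term I would exploit the representation $\mathbf{Y}^{i,N}=\Theta_{V^0,V}\!\left(\mu^{N},Y_0^{i},\mathbf{X}^{i}\right)$ with $\mu^{N}=\frac1N\sum_{j}\delta_{\mathbf{Y}^{j,N}}$ — this is exactly the content of Theorem \ref{Npart} and Remark \ref{geom}, the point being that the weak-interaction structure renders the particle trajectories insensitive to the cross-iterated integrals between the drivers $\mathbf{X}^{1},\dots,\mathbf{X}^{N}$. Coupling $\mu^{N}$ and $\bar\mu^{N}$ through the common index $i$ and then comparing, for each fixed $i$, the two RDEs driven by the \emph{same} rough path $\mathbf{X}^{i}$ from the \emph{same} starting point $Y_0^{i}$ but along the bounded-variation paths $\gamma^{\mu^{N}}$ and $\gamma^{\mu}$, Corollary \ref{used bound}, Lemma \ref{p-var bound} and Corollary \ref{fv bound} give (taking $\alpha=1$ and, just as in the proof of Theorem \ref{fixed point}, using $\left\Vert\gamma^{\mu^{N}}\right\Vert_{1-var;[0,T]}\vee\left\Vert\gamma^{\mu}\right\Vert_{1-var;[0,T]}\le T$ together with Lemma \ref{p-var bound} to absorb the polynomial prefactor of Corollary \ref{used bound} into the exponential and to bound the $1$-local variation of the combined control by $2T+2M_{1,[0,T]}(\omega_{\mathbf{X}^{i}})$)
\[
W_t\!\left(\mu^{N},\bar\mu^{N}\right)\le\frac1N\sum_{i=1}^{N}\rho_{p-var;[0,t]}\!\left(\mathbf{Y}^{i,N},\mathbf{Y}^{i}\right)\le C\Big(\frac1N\sum_{i=1}^{N}e^{C M_{1,[0,T]}(\omega_{\mathbf{X}^{i}})}\Big)\int_0^{t}W_s\!\left(\mu^{N},\mu\right)ds.
\]
By the strong law and Condition \ref{exp inte}, $\frac1N\sum_{i=1}^{N}e^{C M_{1,[0,T]}(\omega_{\mathbf{X}^{i}})}\to\psi_\nu(C)<\infty$ a.s., so on an event of full measure there is a (random) finite $K$ bounding this average uniformly in $N$.

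Putting the two bounds together, on this full-measure event $f_N(t)\le g_N(t)+CK\int_0^{t}f_N(s)\,ds$ for all $t\in[0,T]$, where $f_N(t):=W_t(\mu^{N},\mu)$, $g_N(t):=W_t(\bar\mu^{N},\mu)$ is non-decreasing, and $\sup_{t\le T}g_N(t)\to 0$. Gronwall's inequality then yields $W_T(\mu^{N},\mu)=f_N(T)\le g_N(T)\,e^{CKT}\to 0$ a.s., i.e.\ $\mu^{N}\to\mu$ a.s.\ in $\left(\mathcal{M}_1(P_{p,e}),W\right)$; in particular the law of the random measure $\mu^{N}$ converges weakly to $\delta_\mu$, and by Sznitman's equivalence \cite{sznitman} between $\mu$-chaoticity and convergence of the empirical measures this is precisely propagation of chaos. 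I do not expect a serious obstacle here, since the analytic work has already been absorbed into Corollary \ref{used bound} and Condition \ref{exp inte}; the two points requiring care are (i) keeping the Gronwall exponent \emph{linear} in $M_{1,[0,T]}(\omega_{\mathbf{X}^{i}})$ — which is exactly why one must work with the $\alpha$-local variation rather than $\omega_{\mathbf{X}}(0,T)$, and where the tail estimates of \cite{CLL} enter — and (ii) controlling simultaneously the two independent sources of randomness (the empirical preference measure and the empirical law of the decoupled processes) on a single full-measure event.
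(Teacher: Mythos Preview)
Your proposal is correct and follows essentially the same route as the paper: introduce the decoupled processes $\mathbf{Y}^{i}=\Theta_{V^{0},V}(\mu,Y_{0}^{i},\mathbf{X}^{i})$ and the intermediate empirical measure $\bar\mu^{N}$, bound $W_{t}(\mu^{N},\bar\mu^{N})$ via the RDE Lipschitz estimate (Corollary~\ref{used bound}, Lemma~\ref{p-var bound}, Corollary~\ref{fv bound}) to obtain the Gronwall inequality, and close with the strong law applied to both $W_{T}(\bar\mu^{N},\mu)$ and the average $\frac{1}{N}\sum_{i}\exp\!\big(CM_{1,[0,T]}(\omega_{\mathbf{X}^{i}})\big)$. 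Your write-up is, if anything, slightly more explicit than the paper's about running the estimate for every $t\le T$ before invoking Gronwall.
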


\begin{proof}
\bigskip Fix $N\in%
%TCIMACRO{\U{2115} }%
%BeginExpansion
\mathbb{N}
%EndExpansion
$ and suppose $i\in\left\{  1,...N\right\}  $. Let $\mathbf{Y}^{i}%
=\Theta_{V^{0},V}\left(  \mu,Y_{0}^{i},\mathbf{X}^{i}\right)  $ so that the
law $\mathbf{Y}^{i}$ is $\mu$, and $\left\{  \mathbf{Y}^{1},....,\mathbf{Y}%
^{N}\right\}  $ are $N$ independent copies of the solution of the rough
McKean-Vlasov equation. Then using Corollary \ref{used bound} and Lemma
\ref{p-var bound} we have (for $\alpha$ in $(0,1])$ that
\begin{equation}
\rho_{p-var;\left[  0,T\right]  }\left(  \mathbf{Y}^{i},\mathbf{Y}%
^{i,N}\right)  \leq C_{1}\left\vert \left\vert \gamma^{\mu}-\gamma^{\mu^{N}%
}\right\vert \right\vert _{1-var;\left[  0,T\right]  }\exp\left(
C_{1}M_{\alpha,\left[  0,T\right]  }\left(  \omega_{\mathbf{X}^{i}}\right)
\right)  . \label{exp}%
\end{equation}
Using Corollary \ref{fv bound} we observe
\[
\left\vert \left\vert \gamma^{\mu}-\gamma^{\mu^{N}}\right\vert \right\vert
_{1-var;\left[  0,T\right]  }\leq C_{2}\int_{0}^{T}W_{t}\left(  \mu,\mu
^{N}\right)  dt,
\]
and hence by summing (\ref{exp}) over $i=1,...,N$ we obtain
\begin{equation}
\frac{1}{N}\sum_{i=1}^{N}\rho_{p-var;\left[  0,T\right]  }\left(
\mathbf{Y}^{i},\mathbf{Y}^{i,N}\right)  \leq C_{2}\int_{0}^{T}W_{t}\left(
\mu,\mu^{N}\right)  dt\frac{1}{N}\sum_{i=1}^{N}\exp\left(  C_{1}%
M_{\alpha,\left[  0,T\right]  }\left(  \omega_{\mathbf{X}^{i}}\right)
\right)  . \label{b1}%
\end{equation}
Let%
\[
\bar{\mu}^{N}=\frac{1}{N}\sum_{j=1}^{N}\delta_{\mathbf{Y}^{j}}.
\]

Then we also have
\begin{equation}
W_{T}\left(  \mu,\mu^{N}\right)  \leq W_{T}\left(  \mu,\bar{\mu}^{N}\right)
+W_{T}\left(  \bar{\mu}^{N},\mu^{N}\right)  . \label{b2}%
\end{equation}
And, on the other hand using (\ref{b1}) we have the bound%
\begin{equation}
W_{T}\left(  \bar{\mu}^{N},\mu^{N}\right)  \leq C_{2}\int_{0}^{T}W_{t}\left(
\mu,\mu^{N}\right)  dt\frac{1}{N}\sum_{i=1}^{N}\exp\left(  C_{1}%
M_{\alpha,\left[  0,T\right]  }\left(  \omega_{\mathbf{X}^{i}}\right)
\right)  . \label{b3}%
\end{equation}
Putting (\ref{b3}) into (\ref{b2}) and using Gronwall's lemma we deduce that
\[
W_{T}\left(  \mu,\mu^{N}\right)  \leq C_{2}W_{T}\left(  \mu,\bar{\mu}%
^{N}\right)  \exp\left[  \frac{C_{1}}{N}\sum_{i=1}^{N}\exp\left(
C_{1}M_{\alpha,\left[  0,T\right]  }\left(  \omega_{\mathbf{X}^{i}}\right)
\right)  \right]  .
\]
It is a simple matter to conclude from the strong law of large numbers that
both $W_{T}\left(  \mu,\bar{\mu}^{N}\right)  \rightarrow0$ a.s., and
\[
\frac{1}{N}\sum_{i=1}^{N}\exp\left(  C_{1}M_{\alpha,\left[  0,T\right]
}\left(  \omega_{\mathbf{X}^{i}}\right)  \right)  \rightarrow E\left[
\exp\left(  C_{1}M_{\alpha,\left[  0,T\right]  }\left(  \omega_{\mathbf{X}%
}\right)  \right)  \right]  <\infty,
\]
a.s. as $N\rightarrow\infty.$ It is then easy to deduce (\ref{prop}).
Propagation of chaos is then a consequence of the classical result of
\cite{sznitman} we cited earlier.
\end{proof}

There are a number of follow-up results that seem worth pursuing. For example,
Sanov-type theorems \`{a} la Dawson-Gartner \cite{dawson} will be possible for
(\ref{prop}) in the weakly interacting case. Indeed, the presence of the rough
path topology, in which the universal limit theorem guarantees the continuity
of the It\^{o} map seems to simplify things greatly.\ We will return to these
discussion in future work.

\bigskip

\bigskip

\bigskip

\bigskip

\bigskip

\bigskip

\bigskip

\bigskip

\bigskip
\end{document}